\newtheorem{theorem}{Theorem}[section]
\newtheorem{corollary}[theorem]{Corollary}
\newtheorem{proposition}[theorem]{Proposition}
\theoremstyle{definition}
\newtheorem{definition}[theorem]{Definition}
\newtheorem{remark}[theorem]{Remark}
\numberwithin{equation}{section}
\newcommand{\CC}{\mathbb C}
\newcommand{\HH}{\mathbb H}
\newcommand{\NN}{\mathbb N}
\newcommand{\ZZ}{\mathbb Z}
\newcommand{\cD}{\mathcal D}
\newcommand{\cE}{\mathcal E}
\newcommand{\cH}{\mathcal H}
\newcommand{\SL}{\mathop{\mathrm {SL}}\nolimits}
\newcommand{\SO}{\mathop{\mathrm {SO}}\nolimits}
\newcommand{\Orth}{\mathop{\null\mathrm {O}}\nolimits}
\newcommand{\rank}{\mathop{\mathrm {rank}}\nolimits}
\newcommand{\latt}[1]{{\langle{#1}\rangle}}
\newcommand{\Grit}{\operatorname{Grit}}
\newcommand{\II}{\operatorname{II}}
\newcommand{\Borch}{\operatorname{Borch}}
\newcommand{\im}{\operatorname{Im}}
\newcommand{\norm}{\operatorname{Norm}}
\newcommand{\Proj}{\operatorname{Proj}}
\def\div{\operatorname{div}}
\newcommand{\m}{\operatorname{mod}}
\begin{document}

\title[On some free algebras of orthogonal modular forms II]{On some free algebras of orthogonal modular forms II}

\author{Haowu Wang}

\address{Max-Planck-Institut f\"{u}r Mathematik, Vivatsgasse 7, 53111 Bonn, Germany}

\email{haowu.wangmath@gmail.com}

\subjclass[2010]{11F50, 11F55, 32N15}

\date{\today}

\keywords{Symmetric domains of type IV, modular forms for orthogonal groups, Jacobi forms, reflection groups, free algebras, modularity of formal Fourier--Jacobi expansions}

\begin{abstract}
In this paper we construct 16 free algebras of modular forms on type IV symmetric domains for some reflection groups related to the eight rescaled root lattices $A_1(2)$, $A_1(3)$, $A_1(4)$, $2A_1(2)$, $A_2(2)$, $A_2(3)$, $A_3(2)$, $D_4(2)$. 
As a corollary, we prove the modularity of formal Fourier--Jacobi expansions for these reflection groups.
\end{abstract}

\maketitle

\section{Introduction}
It is a classical problem to determine the algebra of automorphic forms on a symmetric domain. We are interested in the case of orthogonal modular forms, i.e. automorphic forms on symmetric domains of type IV for orthogonal groups of signature $(2,n)$.

Let $M$ be an even lattice of signature $(2,n)$ with $n\geq 3$. We define the affine cone $\mathcal{A}(M)$ as one of the two complex conjugate connected components of the space
$$
\{ \mathcal{Z} \in M\otimes\CC :  (\mathcal{Z}, \mathcal{Z})=0, (\mathcal{Z},\bar{\mathcal{Z}}) > 0 \}.
$$
Its projectivization $\cD(M)$ is identified with the Hermitian symmetric domain of type IV, namely $\Orth^+_{2,n}/ (\SO_2\times \Orth_n)$. We denote by $\Orth^+(M)$ the subgroup of $\Orth(M)$ preserving the connected component $\cD(M)$. Let $\Gamma$ be a finite index subgroup of $\Orth^+(M)$. 
\begin{definition}
Let $k$ be a non-negative integer. A modular form of weight $k$ and character $\chi: \Gamma\to \CC^*$ for $\Gamma$ is a holomorphic function $F: \mathcal{A}(M)\to \CC$ satisfying
\begin{align*}
F(t\mathcal{Z})&=t^{-k}F(\mathcal{Z}), \quad \forall t \in \CC^*,\\
F(g\mathcal{Z})&=\chi(g)F(\mathcal{Z}), \quad \forall g\in \Gamma.
\end{align*}
\end{definition}
By \cite{BB66}, the graded algebra 
$$
M_*(\Gamma)=\bigoplus_{k\in \NN} M_k(\Gamma)
$$
of modular forms for $\Gamma$ with trivial character is finitely generated over $\CC$. Moreover, if $M_*(\Gamma)$ is freely generated, then the Satake-Baily-Borel compactification of the modular variety $\cD(M)/\Gamma$ is a weighted projective space determined by $\Proj(M_*(\Gamma))$. It is also known that if $M_*(\Gamma)$ is free then $\Gamma$ is generated by reflections (see \cite{VP89}). The first example of such free algebras was determined by Igusa \cite{Igu62}, which is related to the orthogonal group of signature $(2,3)$.  By means of the theory of Weyl invariant Jacobi forms \cite{Wir92}, we proved  the freeness of 25 graded algebras of orthogonal modular forms related to root lattices in a universal method joint with B. Williams \cite{WW20}. In \cite{Wan20} we established a necessary and sufficient condition for $M_*(\Gamma)$ to be free, which is based on the existence of a remarkable modular form which vanishes exactly on the mirrors of reflections in $\Gamma$ and equals the Jacobian determinant of the $n+1$ generators.  As a continuation of our previous work \cite{WW20}, in this paper we apply the sufficient condition to some rescaled root lattices and construct 16 new free algebras of orthogonal modular forms. 

\begin{theorem}\label{MTH}
Let $M=2U\oplus L(-1)$, where $U$ is a hyperbolic plane and $L=A_1(2)$, $A_1(3)$, $A_1(4)$, $2A_1(2)$, $A_2(2)$, $A_2(3)$, $A_3(2)$, or $D_4(2)$. For each $M$, there is at least one finite index subgroup $\Gamma$ of $\Orth^+(M)$ generated by reflections such that $M_*(\Gamma)$ is freely generated over $\CC$. The generators can be constructed as additive lifts of certain holomorphic  Jacobi forms.
\end{theorem}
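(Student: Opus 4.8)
The plan is to invoke the sufficient condition for freeness from \cite{Wan20}: if $\Gamma\subset\Orth^+(M)$ is generated by reflections and one can produce $n+1$ holomorphic modular forms for $\Gamma$ with trivial character whose Jacobian determinant is a nonzero modular form vanishing to order exactly one along every mirror of a reflection in $\Gamma$ and with no other zeros, then $M_*(\Gamma)$ is freely generated by those forms. Since $2U$ has signature $(2,2)$ and $L(-1)$ has signature $(0,\rank L)$, here $n=\rank(L)+2$, so for each of the eight lattices $L$ we must exhibit $n+1=\rank(L)+3$ such forms. The work therefore splits into: fixing $\Gamma$; writing down candidate generators; computing their Jacobian; and determining its divisor.

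For $\Gamma$ I would take the subgroup of $\Orth^+(M)$ generated by all reflections $\sigma_r$ with $r\in M$ primitive, $(r,r)<0$, and $2(r,M)\subseteq (r,r)\ZZ$, so that its mirrors are the rational quadratic divisors of the small discriminants forced by $L$. The eight lattices in the statement are precisely the scalings of the root lattices $A_1,A_2,A_3,D_4$ by $2,3,4$ for which this arrangement is large enough that $\Gamma$ has finite index in $\Orth^+(M)$ (equivalently, $M$ is reflective); I would verify this index directly, or deduce it from the existence of the reflective modular form constructed below.

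For the generators I would use Jacobi forms of lattice index $L$ together with Gritsenko's additive lift $\Grit$, which carries a holomorphic Jacobi form of weight $k$ and index one for $L$ to a modular form of weight $k$ for $\Orth^+(M)$ whose first Fourier--Jacobi coefficient is that Jacobi form. For each $L$ I would pick explicit holomorphic Jacobi forms $\phi_0,\dots,\phi_n$ — assembled from Eisenstein-type Jacobi forms, theta blocks, and Weyl-invariant Jacobi forms of the relevant root system — and set $f_i=\Grit(\phi_i)$, choosing the weights $k_i$ so that $\sum_{i=0}^n(k_i+1)$ equals the weight of the reflective modular form on $\cD(M)$ attached to the mirror arrangement of $\Gamma$. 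Passing to a tube-domain model with last coordinate $\omega$, the leading coefficient of the Fourier--Jacobi expansion of $J:=J(f_0,\dots,f_n)$ in $e^{2\pi i\omega}$ is, up to a nonzero constant, a Jacobian of the $\phi_i$ in the remaining variables $(\tau,\mathfrak z)$; computing it shows $J\not\equiv0$, so $f_0,\dots,f_n$ are algebraically independent and $\CC[f_0,\dots,f_n]\subseteq M_*(\Gamma)$ is a polynomial subalgebra of the full Krull dimension $n+1$.

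The heart of the argument, and the step I expect to be the main obstacle, is to show that $\operatorname{div}(J)$ is \emph{exactly} the union of the mirrors of $\Gamma$, each with multiplicity one. Because $J$ transforms with the determinant character, it automatically vanishes along every such mirror; the difficulty is excluding extra zeros and higher multiplicities. I would settle this by constructing the reflective modular form $B$ on $\cD(M)$ whose divisor is precisely that mirror arrangement (via a Borcherds product — this is where the special arithmetic of the eight lattices enters), checking that $\operatorname{wt}(B)=\sum_{i=0}^n(k_i+1)$ and that $B$ carries the same determinant character as $J$; then $J/B$ is a holomorphic modular form of weight $0$ with trivial character for $\Gamma$, hence a nonzero constant. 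Consequently $J=cB$ for some $c\in\CC^*$, its divisor is the mirror arrangement with all multiplicities equal to one, and the sufficient condition of \cite{Wan20} yields $M_*(\Gamma)=\CC[f_0,\dots,f_n]$. The delicate points to be carried out case by case are the precise choice of the $\phi_i$ making the weight identity $\sum(k_i+1)=\operatorname{wt}(B)$ hold, and the explicit Borcherds-product description of $B$.
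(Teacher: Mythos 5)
Your overall strategy---additive lifts as candidate generators, non-vanishing of the Jacobian read off from the leading Fourier--Jacobi coefficient, a reflective Borcherds product $B$ carrying the prescribed mirror divisor, and the identification $J=cB$ via the weight-zero quotient---is exactly the paper's. The genuine gap is your treatment of characters. The lifts $\Grit(\phi_i)$ produced by the Cl\'ery--Gritsenko construction carry non-trivial finite-order characters (induced by $\upsilon_\eta^D$ and $\nu^{2t}$), and it is not true that they restrict to character-free modular forms on your $\Gamma$, which you take to be the group generated by \emph{all} reflections of $\Orth^+(M)$. Concretely, for $L=A_1(2)$ the lift $\Grit(\eta^{6}\phi_{-1,\frac12})=\Borch(\phi_{0,2})$ vanishes to order exactly one along the mirrors of the type-$v$ reflections (norm $-4$, divisor $4$), so its character takes the value $-1$ on those reflections; it cannot be a generator of $M_*(\Gamma)$ for the full reflection group, and the same phenomenon occurs for several of the other seven lattices. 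Since the sufficient condition of \cite{Wan20} requires the $n+1$ forms to have \emph{trivial} character on $\Gamma$---and your conclusion that $J/B$ is a constant also leans on this---the argument does not close as written. A second, related omission: you assert finite index of the reflection group either ``directly'' or from the existence of $B$, but neither is carried out, and deducing finite index from a reflective form is not immediate.

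The paper resolves both points in a way your proposal should adopt. It defines $\Gamma_0$ as the intersection of the kernels of the characters of the chosen lifts, which is automatically of finite index in $\Orth^+(M)$; it then shows $\Gamma_0$ contains all reflections of the relevant types by a divisor argument (if some $\sigma_r$ lay outside a kernel, the corresponding lift would vanish on $\cD_r$, and its quotient by $\Phi_{12,L}$ would be a holomorphic form of negative weight, a contradiction); finally, the ``moreover'' clause of the sufficient-condition theorem identifies $\Gamma_0$ a posteriori with the group generated by the reflections whose mirrors lie in $\div(J)$, so one never needs to prove directly that the abstractly defined reflection group has finite index. Where a character is genuinely non-trivial on a class of reflections, the paper replaces the offending lift by its \emph{square} and enlarges the mirror arrangement (hence the Borcherds product $B$) accordingly, so that the weight identity $\sum_i(k_i+1)=\mathrm{wt}(B)$ still holds; this is why, for instance, $M_*(\Gamma_{2,4}(A_1(2)))$ is generated by $\Grit(\eta^{6}\phi_{-1,\frac12})^2$ rather than by the lift itself, while the unsquared lift generates only for the smaller group $\Gamma_{2,4'}(A_1(2))$. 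Your weight bookkeeping would eventually force the same adjustment, but it must be made explicit, and the choice of which mirrors to include in $\div(B)$ has to be coordinated with which characters you can actually kill.
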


The proof is based on some explicit constructions of modular forms.  We first construct some modular forms whose divisor is a sum of some mirrors of reflections. The divisors of these modular forms will determine the arithmetic group $\Gamma$. We then construct basic modular forms as additive lifts with characters (see \cite{CG13}) of some algebraically independent Jacobi forms. This guarantees the non-vanishing of their Jacobian. It is enough to conclude the theorem from these constructions and the sufficient condition mentioned above. As an application of the main theorem, we prove the modularity of formal Fourier-Jacobi expansions for these reflection groups.  Such property has applications in arithmetic geometry (see \cite{BR15}).

The paper is organized as follows. In \S \ref{Sec:JFs} we introduce some necessary materials, including Jacobi forms of lattice index, additive lifts, Borcherds products, and the sufficient condition. In \S \ref{Sec:freealgebras} we construct special modular forms and prove the main theorem case by case.

\section{Preliminaries}\label{Sec:JFs}

\subsection{Jacobi forms of lattice index}
Let $M$ be an even lattice of signature $(2,n)$. We denote its dual lattice by $M^\vee$ and its discriminant group by $D(M)=M^\vee/M$. The discriminant kernel $\widetilde{\Orth}^+(M)$ is defined as the kernel of the natural homomorphism $\Orth^+ (M) \to \Orth(D(M))$.  For $v\in M$, we denote the positive generator of the ideal $(v,M)=\{(v,x): x\in M\}$ by $\div(v)$. 
For any integer $a$, we denote by $M(a)$ the lattice obtained by rescaling the bilinear form of $M$ with $a$.  

We assume that $M=2U\oplus L(-1)$, where $L$ is an even positive definite lattice.
Jacobi forms of lattice index $L$ appear naturally in the Fourier-Jacobi expansions of orthogonal modular forms for $\widetilde{\Orth}^+(M)$  at the standard 1-dimensional cusp determined by $2U$ on the tube domain 
$$
\cH(L)=\{Z=(\tau,\mathfrak{z},\omega)\in \HH\times (L\otimes\CC)\times \HH: 
(\im Z,\im Z)>0\}, 
$$
where $(\im Z,\im Z)=2\im \tau \im \omega - (\im \mathfrak{z},\im \mathfrak{z})_L$.

Following \cite{CG13}, we define Jacobi forms of rational index with character associated to $L$. As explained in \cite[Proposition 2.3]{CG13}, we only need to distinguish  Jacobi forms of integral  and half-integral index associated to an even lattice. Let $\upsilon_{\eta}$ be the multiplier system of the Dedekind $\eta$-function $\eta(\tau)=q^{1/24}\prod_{j=1}^\infty(1-q^j)$. The integral \textit{Heisenberg group} of $L$ is defined as
$$
H(L)=\left\lbrace [x,y:r]: x,y\in L, r+(x,y)/2\in \ZZ \right\rbrace.
$$
The minimal integral \textit{Heisenberg group} of $L$ is the subgroup
$$
H_s(L)=\latt{[x,0:0], [0,y:0] \vert \; x,y \in L}.
$$
We remark that  Jacobi forms can be regarded as (orthogonal) modular forms for the integral Jacobi group $\Gamma^J(L)$ which is isomorphic to the semi-direct product of $\SL_2(\ZZ)$ with $H(L)$. 
We define 
\begin{equation}
\nu([x,y:r])=e^{\pi i((x,x)/2+(y,y)/2-(x,y)/2+r )}.
\end{equation}
By \cite[\S 2]{CG13}, any finite-order character (or multiplier system) of $\Gamma^J(L)$ is determined by $\upsilon_{\eta}$ and $\nu$. We define Jacobi forms as follows.

\begin{definition}
For $k\in\frac{1}{2}\ZZ$, $t\in \frac{1}{2}\NN$ and $\chi = \upsilon_{\eta}^D\times \nu^{2t}$, where $D$ is a positive integer. A holomorphic function $\varphi : \HH \times (L \otimes \CC) \rightarrow \CC$ is 
called a {\it weakly holomorphic} Jacobi form of weight $k$ and index $t$ with character (or multiplier system) $\chi$ associated to $L$,
if it satisfies the transformation laws
\begin{align*}
\varphi \left( \frac{a\tau +b}{c\tau + d},\frac{\mathfrak{z}}{c\tau + d} 
\right)& = \chi(A) (c\tau + d)^k 
\exp{\left(i \pi t \frac{c(\mathfrak{z},\mathfrak{z})}{c 
\tau + d}\right)} \varphi ( \tau, \mathfrak{z} ), \quad \left(\begin{array}{cc} 
a & b \\ 
c & d
\end{array}
\right)   \in \SL_2(\ZZ), \\
\varphi (\tau, \mathfrak{z}+ x \tau + y)&= 
\chi([x,y:(x,y)/2])\exp{\bigl(-i \pi t ( (x,x)\tau +2(x,\mathfrak{z}))\bigr)} 
\varphi (\tau, \mathfrak{z} ), \quad x,y \in L,
\end{align*}
and if its Fourier expansion  takes the form
\begin{equation*}
\varphi ( \tau, \mathfrak{z} )=\sum_{ \substack{n\geq n_1, n\equiv \frac{D}{24}\m  \ZZ \\ \ell \in \frac{1}{2}L^{\vee} } }f(n,\ell)q^n\zeta^\ell,
\end{equation*}
where $n_1\in \ZZ$ is a constant, $q=e^{2\pi i \tau}$ and $\zeta^\ell=e^{2\pi i (\ell,
\mathfrak{z})}$. 
If $f(n,\ell) = 0$ whenever $n < 0$,
then $\varphi$ is called a {\it weak} Jacobi form.  If $ f(n,\ell) = 0 $ whenever
$ 2nt - (\ell,\ell) < 0 $ (resp. $\leq 0$),
then $\varphi$ is called a {\it holomorphic}
(resp. {\it cusp}) Jacobi form. 
\end{definition}

We denote by $J^{!}_{k,L,t}(\chi)$ (resp. $J^{w}_{k,L,t}(\chi)$, $J_{k,L,t}(\chi)$, 
$J_{k,L,t}^{\text{cusp}}(\chi)$) the vector space of weakly holomorphic Jacobi 
forms (resp. weak, holomorphic, cusp Jacobi forms) 
of weight $k$ and index $t$ with character $\chi$ for $L$. 
The classical Jacobi forms defined by Eichler--Zagier \cite{EZ85} $J_{k,t}$
are identical to the Jacobi forms $J_{k,A_1, t}$ for the lattice  $A_1=\latt{\ZZ, 2x^2}$.
The most important Jacobi form is defined by the Jacobi triple product 
$$
\vartheta(\tau,z)=q^\frac{1}{8} (\zeta^\frac{1}{2}-\zeta^{-\frac{1}{2}} )\prod_{n\geq 1} (1-q^n\zeta)(1-q^n \zeta^{-1})(1-q^n),
$$
where $q=e^{2 \pi i \tau}$ and $\zeta=e^{2 \pi i z}$, $z \in \CC$. Under the above definition, $\vartheta\in J_{\frac{1}{2},A_1,\frac{1}{2}}(\upsilon_\eta^3\times \nu)$ (see \cite{GN98}).

\subsection{Additive lifts with characters} In this subsection we recall the additive lifts with characters introduced in \cite{CG13}. These lifts are our main tool to construct generators of modular forms. We start with the raising index Hecke operators.

\begin{proposition}[see Proposition 3.1 in \cite{CG13}]\label{prop:CG1}
Let $\varphi \in J_{k,L,t}^w(\upsilon_{\eta}^D \times \nu^{2t})$ be a weak Jacobi form of weight $k$ and index $t$ for $L$. We assume that $k$ is integral and $D$ is an even divisor of $24$. If $Q=\frac{24}{D}$ is odd, we further assume that $t$ is integral. Then for any natural $m$ coprime to $Q$, we have 
$$
\varphi \lvert_{k,t}T_{-}^{(Q)}(m)(\tau,\mathfrak{z})=m^{-1}\sum_{\substack{ad=m,a>0\\ 0\leq b <d}}a^k \upsilon_{\eta}^D(\sigma_a)\varphi\left(\frac{a\tau+bQ}{d},a\mathfrak{z}\right)\in J_{k,L,mt}^w(\upsilon_{\eta}^{D x} \times \nu^{2t}),
$$
where $x,y\in \ZZ$ such that $mx+Qy=1$, 
and
$$\sigma_a= \left(\begin{array}{cc}
dx+Qdxy & -Qy \\ 
Qy & a
\end{array}  \right) \in \SL_2(\ZZ).$$ 
Moreover, if $f(n,\ell)$ are the Fourier coefficients of $\varphi$, 
then the Fourier coefficients of $\varphi \lvert_{k,t}T_{-}^{(Q)}(m)$ have the following form
$$
f_m(n,\ell)=\sum_{a \mid (n,\ell,m)}a^{k-1} v_{\eta}^D(\sigma_a)f\left( \frac{nm}{a^2},\frac{\ell}{a}\right),
$$
where $a \mid (n,\ell,m)$ means that $a\mid nQ$, $a^{-1}\ell\in \frac{1}{2}L^\vee$ and $a\mid m$.
\end{proposition}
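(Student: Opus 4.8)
The plan is to verify directly that $\psi:=\varphi\lvert_{k,t}T_{-}^{(Q)}(m)$ lies in $J_{k,L,mt}^w(\upsilon_\eta^{Dx}\times\nu^{2t})$ by checking its three defining properties: the $H(L)$-transformation law at index $mt$, the $\SL_2(\ZZ)$-transformation law with the stated multiplier, and the shape of the Fourier expansion (which in particular forces $n\ge 0$, hence weakness). I would treat these in turn, reading off the coefficient formula at the last step. A preliminary remark makes the finite sum well defined: since $\upsilon_\eta^D$ takes the value $e^{2\pi i D/24}$ at the translation $\tau\mapsto\tau+1$ and $DQ=24$, the function $\varphi(\tau,\mathfrak z)$ is $Q$-periodic in $\tau$; hence $\varphi\bigl(\tfrac{a\tau+bQ}{d},a\mathfrak z\bigr)$ depends only on $b\bmod d$, and this is precisely why the operator uses the shift $+bQ$ rather than $+b$.

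\emph{Elliptic law.} Fix a summand and set $\tau'=\tfrac{a\tau+bQ}{d}$, so $\tau=\tfrac{d\tau'-bQ}{a}$. Replacing $\mathfrak z$ by $\mathfrak z+\lambda\tau+\mu$ with $\lambda,\mu\in L$ turns $a\mathfrak z$ into $a\mathfrak z+(d\lambda)\tau'+(a\mu-bQ\lambda)$, a translation of the elliptic argument of $\varphi$ (viewed at index $t$) by a vector of $L\tau'+L$. Applying the index-$t$ elliptic law of $\varphi$ and substituting $\tau'$ back in terms of $\tau$, the exponential factor becomes $\exp\!\bigl(-i\pi\,mt\,((\lambda,\lambda)\tau+2(\lambda,\mathfrak z))\bigr)$ --- the correct factor for index $mt=ad\,t$ --- multiplied by the root of unity $\nu\bigl([d\lambda,a\mu-bQ\lambda:\tfrac12(d\lambda,a\mu-bQ\lambda)]\bigr)^{2t}\exp(-i\pi t\,dbQ(\lambda,\lambda))$. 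Expanding this using $\nu([x,y:\tfrac12(x,y)])=e^{\pi i((x,x)/2+(y,y)/2)}$ and the evenness of $L$ (so $(\lambda,\lambda)/2,(\mu,\mu)/2\in\ZZ$), together with the standing hypothesis that $t$ is integral whenever $Q$ is odd (when $Q$ is even, $m$ and hence $a,d$ are automatically odd), one checks that this root of unity equals $\nu\bigl([\lambda,\mu:\tfrac12(\lambda,\mu)]\bigr)^{2t}$, independently of $(a,b,d)$. Since a translation of $\mathfrak z$ fixes $\tau$, hence each $\tau'$, the sum transforms term by term, giving the index-$mt$ elliptic law with $H(L)$-character $\nu^{2t}$ (note $\nu^{2mt}=\nu^{2t}$ on $H(L)$, because $\nu^2=1$ there and $t(m-1)\in\ZZ$).

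\emph{Modular law.} Write $\gamma_{a,b}=\begin{psmallmatrix}a&bQ\\0&d\end{psmallmatrix}$, a matrix of determinant $m$, so that $a^k\varphi\bigl(\tfrac{a\tau+bQ}{d},a\mathfrak z\bigr)$ equals, up to a fixed power of $m$, the slash $\varphi\lvert_{k,t}\gamma_{a,b}$; thus $\psi=(\text{const})\sum_{ad=m,\,0\le b<d}\upsilon_\eta^D(\sigma_a)\,\varphi\lvert_{k,t}\gamma_{a,b}$. The crux is the combinatorial fact that for each $A\in\SL_2(\ZZ)$ one can write $\gamma_{a,b}A=\sigma_{a,b,A}\,\gamma_{a',b'}$ with $\sigma_{a,b,A}\in\SL_2(\ZZ)$ and $(a,b)\mapsto(a',b')$ a bijection of the index set $\{ad=m,\ 0\le b<d\}$; this is where $\gcd(m,Q)=1$ is used, since clearing the lower-left entry of $\gamma_{a,b}A$ and reducing its upper-right entry modulo $d$ forces the inversion of $m$ modulo $Q$, the residue $x\equiv m^{-1}\pmod Q$ being exactly what is recorded by $mx+Qy=1$; the matrix $\sigma_a$ of the statement is the instance of $\sigma$ arising from $A=I$ together with this normalization. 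Granting this, and using that the Jacobi automorphy factor is a genuine cocycle while $\varphi\lvert_{k,t}\sigma=\upsilon_\eta^D(\sigma)\,\varphi$ for $\sigma\in\SL_2(\ZZ)$, one obtains $\psi\lvert_{k,mt}A=(\text{const})\sum\upsilon_\eta^D(\sigma_a)\upsilon_\eta^D(\sigma_{a,b,A})\,\varphi\lvert_{k,t}\gamma_{a',b'}$; reindexing by $(a,b)\mapsto(a',b')$ reduces the claim $\psi\lvert_{k,mt}A=\upsilon_\eta^{Dx}(A)\,\psi$ to the identity $\upsilon_\eta^D(\sigma_a)\,\upsilon_\eta^D(\sigma_{a,b,A})=\upsilon_\eta^{Dx}(A)\,\upsilon_\eta^D(\sigma_{a'})$, which holds for every $(a,b)$. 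Since $D$ is even, $\upsilon_\eta^D$ is an honest character of $\SL_2(\ZZ)$ of order $Q=24/D$ factoring through $\SL_2(\ZZ)^{\mathrm{ab}}\cong\ZZ/12\ZZ\twoheadrightarrow\ZZ/Q\ZZ$, so this last identity is a congruence modulo $Q$ which, via $x\equiv m^{-1}$, is elementary to verify.

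\emph{Fourier expansion.} Substituting the Fourier series of $\varphi$ into each summand and summing over $b$, the elementary sum $\sum_{0\le b<d}e^{2\pi i\,nbQ/d}$ equals $d$ when $d\mid nQ$ and $0$ otherwise; summing then over the factorizations $ad=m$, matching the new exponents (the new index $N$ satisfies $na^2=Nm$ and the new $\zeta$-index is $a\ell$), and using $m^{-1}a^k d=a^{k-1}$, one arrives at exactly $f_m(N,\ell')=\sum_{a\mid(N,\ell',m)}a^{k-1}\,\upsilon_\eta^D(\sigma_a)\,f(Nm/a^2,\ell'/a)$, with the support condition $a\mid NQ$, $a^{-1}\ell'\in\tfrac12L^\vee$, $a\mid m$ as stated; in particular only $N\ge 0$ occur, so $\psi$ is weak, and $N\equiv Dx/24\pmod\ZZ$ is obtained by tracking $D/24$ through the operator. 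I expect the combinatorial heart of the modular law --- establishing the bijection $(a,b)\mapsto(a',b')$, pinning down $\sigma_a$, and proving the $\upsilon_\eta^D$-cocycle identity that produces the exponent $x$ --- to be the only genuinely delicate point; everything else is a bounded computation.
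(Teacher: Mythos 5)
The paper offers no proof of this proposition --- it is quoted verbatim from Cl\'ery--Gritsenko \cite{CG13}, Proposition 3.1 --- so there is no internal argument to compare yours against; I can only judge your outline on its own terms. It follows what is surely the intended route: direct verification of the three defining properties. Your preliminary remark ($Q$-periodicity of $\varphi$ in $\tau$, explaining the shift $+bQ$) is correct. The elliptic-law computation is right, and the parity bookkeeping does close as you claim: when $Q$ is odd one uses $t\in\ZZ$ and evenness of $L$, and when $Q$ is even one uses that $a,d$ are odd (so $a^2-1$ and $d^2-1$ are divisible by $8$) together with $2\mid Q$. The modular law is the usual coset-permutation argument for the representatives $\begin{psmallmatrix}a&bQ\\0&d\end{psmallmatrix}$, which do form a complete set because $\gcd(Q,d)=1$; the genuine content --- the explicit bijection $(a,b)\mapsto(a',b')$, the identification of $\sigma_a$, and the identity $\upsilon_\eta^D(\sigma_a)\,\upsilon_\eta^D(\sigma_{a,b,A})=\upsilon_\eta^{Dx}(A)\,\upsilon_\eta^D(\sigma_{a'})$ --- is correctly isolated but only asserted. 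Since $D$ is even, $\upsilon_\eta^D$ does factor through $\SL_2(\ZZ)^{\mathrm{ab}}\cong\ZZ/12\ZZ$ and the identity reduces to a congruence modulo $Q$ determined by values on $T$, so this is a finite (if tedious) check: the gap is one of labor, not of ideas.

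One subtlety you pass over too quickly in the Fourier step: the new exponent is $N=na/d$, and writing $nQ=ds$ one finds $NQ=as\equiv ad^{-1}\equiv a^{2}x\pmod Q$, which \emph{a priori} depends on the summand $a$. The required support condition $N\equiv Dx/24\pmod\ZZ$, i.e.\ $NQ\equiv x\pmod Q$, therefore holds only because every unit modulo a divisor of $24$ squares to $1$, so that $a^{2}\equiv 1\pmod Q$ for all $a\mid m$ (recall $\gcd(m,Q)=1$). This is exactly the point that makes the claimed character $\upsilon_\eta^{Dx}$ consistent across all terms, and it deserves to be stated explicitly rather than absorbed into ``tracking $D/24$ through the operator.'' With that addition, and with the coset/cocycle verification actually carried out, your plan yields a complete proof.
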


The additive lifts is defined in terms of the above Hecke operators.

\begin{theorem}[see Theorem 3.2 in \cite{CG13}]\label{th:CG2}
Let $\varphi \in J_{k,L,t}(v_{\eta}^D \times \nu^{2t})$, $k$ be integral, $t$ be half-integral and $D$ be an even divisor of $24$. If the conductor $Q=24/D$ is odd, we further assume that $t$ is integral.  Let $G_k$ be the $\SL_2(\ZZ)$ Eisenstein series of weight $k$, normalized such that the Fourier coefficient at $q$ is 1. Then the function 
$$ 
\Grit(\varphi)(Z)=f(0,0)G_k(\tau)+\sum_{\substack{m\equiv 1\m Q\\ m>0}}\varphi \lvert_{k,t}T_{-}^{(Q)}(m)(\tau,\mathfrak{z}) e^{2\pi i \frac{m}{Q}\omega}
$$
is a modular form of weight $k$ with respect to the discriminant kernel $\widetilde{\Orth}^+(2U\oplus L(-Qt))$ with a character $\chi$ of order $Q$ induced by $v_\eta^D$, the restriction of $\nu^{2t}$ on $H_s(L)$, and the relations $\chi ([0,0:r])=e^{2\pi i \frac{r}{Q}}$, $\chi(V)=1$, where $V: (\tau,\mathfrak{z}, \omega) \mapsto (\omega,\mathfrak{z},\tau)$.
\end{theorem}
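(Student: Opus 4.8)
The plan is to write $\Grit(\varphi)$ as a formal Fourier--Jacobi series in $\omega$, show that it converges to a holomorphic function on the tube domain $\cH(L)$, and then verify the claimed transformation law by checking it on a generating set of $\widetilde{\Orth}^+(2U\oplus L(-Qt))$. For convergence I would use that $\varphi$, being a holomorphic Jacobi form, has Fourier coefficients $f(n,\ell)$ of polynomial growth in $n$ (split off the Eisenstein part and bound the cuspidal part by Hecke's argument, cf.\ \cite{EZ85}); the formula for the Fourier coefficients of $\varphi|_{k,t}T_-^{(Q)}(m)$ in Proposition~\ref{prop:CG1} then shows that the coefficients $f_m(n,\ell)$ of the $m$-th Fourier--Jacobi term are again of polynomial growth in $n$ and $m$, so the series converges absolutely and locally uniformly on $\cH(L)$ with holomorphic sum; collecting terms one sees that only monomials with $2nm/Q-(\ell,\ell)\ge 0$ occur, which is the holomorphy at the standard cusp built into a $\Grit$-lift.

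Next I would recall that, with respect to the $0$-dimensional cusp attached to the chosen copy of $U$, the group $\widetilde{\Orth}^+(2U\oplus L(-Qt))$ is generated by the integral Jacobi group $\Gamma^J(L)\cong\SL_2(\ZZ)\ltimes H(L)$ acting on $(\tau,\mathfrak z)$, by the central translations $\omega\mapsto\omega+r/Q$, and by the involution $V\colon(\tau,\mathfrak z,\omega)\mapsto(\omega,\mathfrak z,\tau)$. Invariance under the central translations is immediate and pins down $\chi([0,0:r])=e^{2\pi i r/Q}$. For the Jacobi group the argument is bookkeeping: by Proposition~\ref{prop:CG1} each $\varphi|_{k,t}T_-^{(Q)}(m)$ is a weak Jacobi form whose $\eta$-multiplier is controlled (here one needs $D$ an even divisor of $24$, and $t$ integral when $Q$ is odd, so that every multiplier which appears has order dividing $Q$) and whose restriction to $H_s(L)$ carries the unchanged character $\nu^{2t}|_{H_s(L)}$; since these Hecke operators are designed to commute with the $\SL_2(\ZZ)$-action up to the $\eta$-multiplier, summing over $m\equiv 1\,(Q)$ and recombining with $G_k(\tau)$ reproduces exactly the $\Gamma^J(L)$-transformation law with the character induced by $\upsilon_\eta^D$ and by $\nu^{2t}|_{H_s(L)}$.

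The main obstacle is invariance under $V$. Here I would substitute the explicit double-coset expression
$$
\varphi|_{k,t}T_-^{(Q)}(m)(\tau,\mathfrak z)=m^{-1}\sum_{\substack{ad=m,\,a>0\\ 0\le b<d}}a^k\,\upsilon_\eta^D(\sigma_a)\,\varphi\Bigl(\tfrac{a\tau+bQ}{d},a\mathfrak z\Bigr)
$$
into the series for $\Grit(\varphi)$, apply $V$, use the $\SL_2(\ZZ)$-transformation of $\varphi$ on each summand, and reorganize the resulting sum over the quadruples $(m,a,b,d)$: the content is that $V$ permutes these data and that, after the change of summation variables, the original series reappears with no extra factor, forcing $\chi(V)=1$. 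A cleaner route is to identify $\Grit(\varphi)$ up to a scalar with the Borcherds additive (theta) lift of the vector-valued modular form attached to $\varphi$ by theta decomposition (cf.\ \cite{GN98,CG13}): a theta lift is automatically $\Orth^+$-invariant, the congruence $m\equiv 1\,(Q)$ and the rescaled index $Qt$ encode the isotropic line chosen at the cusp, and the order-$Q$ character is precisely the one forced by the Weil representation and the metaplectic cover, with $\upsilon_\eta^D$ accounting for the $\eta$-part. Finally I would check that the four prescriptions for $\chi$ are compatible with the defining relations of $\widetilde{\Orth}^+(2U\oplus L(-Qt))$, hence glue to a genuine character, necessarily of order $Q=24/D$ since $\upsilon_\eta^D$ has that order. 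All steps except the $V$-invariance are routine; the full details are carried out in \cite{CG13}.
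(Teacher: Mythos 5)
The paper offers no proof of this theorem: it is quoted verbatim from Cl\'ery--Gritsenko \cite{CG13}, Theorem 3.2, and used as a black box, so there is no internal argument to compare yours against. Your sketch is a faithful outline of the standard proof of such arithmetic (Maass/Gritsenko) liftings, which is essentially what \cite{CG13} carries out: convergence from the polynomial growth of the coefficients of a holomorphic Jacobi form, reduction to a generating set of $\widetilde{\Orth}^+(2U\oplus L(-Qt))$ consisting of the Jacobi group, the center, and $V$ (this generation statement is itself a nontrivial lemma of Gritsenko, valid because the lattice contains $2U$), and the key computation being the $V$-invariance. You correctly identify where the real content lies: after substituting the double-coset formula, $V$-invariance is exactly the statement that the Fourier coefficients $f_m(n,\ell)$ of Proposition~\ref{prop:CG1} are symmetric under exchanging $m$ with $nQ$ (both running over $1+Q\ZZ$ by the congruence $n\equiv D/24 \bmod \ZZ$), which is visible from the formula $f_m(n,\ell)=\sum_{a\mid(n,\ell,m)}a^{k-1}\upsilon_\eta^D(\sigma_a)f(nm/a^2,\ell/a)$; you state the mechanism but do not execute the reindexing, and you ultimately defer to \cite{CG13}. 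Two small points to watch if you were to write this out: the central translations $[0,0:r]$ have $r\in\ZZ$ and act on $\omega$ by integral shifts on the rescaled tube domain, so the relation $\chi([0,0:r])=e^{2\pi i r/Q}$ comes from $e^{2\pi i\frac{m}{Q}\omega}$ with $m\equiv 1 \bmod Q$ exactly as you say; and the claim that the character has order exactly $Q$ uses the hypothesis that $t$ is integral when $Q$ is odd, so that $\nu^{2t}|_{H_s(L)}$ contributes nothing of order $2$ beyond $Q$ --- your parenthetical remark shows you noticed why these hypotheses are there. As a blind reconstruction of a cited result, this is acceptable in outline, but it is a sketch rather than a proof.
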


\subsection{Borcherds products}
For any negative norm vector $r\in M^\vee$, the hyperplane
\begin{equation}
 \cD_r(M)=r^\perp\cap \cD(M)=\{ [\mathcal{Z}]\in \cD(M) : (\mathcal{Z},r)=0\}
\end{equation}
is called the rational quadratic divisor associated to $r$.  (We will write $\cD_r$ instead of $\cD_r(M)$  if there is no confusion.)
The reflection fixing $\cD_r(M)$ is defined as
\begin{equation}
\sigma_r(x)=x-\frac{2(r,x)}{(r,r)}r,  \quad x\in M.
\end{equation}
The hyperplane $\cD_r(M)$ is called the mirror of $\sigma_r$.
A primitive vector $r\in M$ of negative norm is called reflective if $\sigma_r\in\Orth^+(M)$, in which case we call $\cD_r(M)$ a reflective divisor. A primitive vector $l\in M$ with $(l,l)=-2d$ is reflective if and only if $\div(l)=2d$ or $d$. A modular form $F$ for $\Gamma<\Orth^+(M)$ is called reflective if its zero divisor is a sum of some reflective divisors. Reflective modular forms are very exceptional and have applications in generalized Kac--Moody algebras, algebraic geometry, and reflection groups (see e.g. a survey \cite{Gri18}). The celebrated Borcherds products \cite{Bor98} provide a powerful method for the  construction of reflective modular forms.  In this paper, we use the variant of Borcherds products attributed to Gritsenko-Nikulin in the context of Jacobi forms. 

\begin{theorem}[see Theorem 4.2 in \cite{Gri18})]
We fix an ordering $\ell >0$ in $L^\vee$ in a way similar to positive root systems (see the bottom of page $825$ in \cite{Gri18}).
Let 
$$
\varphi(\tau,\mathfrak{z})=\sum_{n\in\ZZ}\sum_{\ell\in L^\vee}f(n,\ell)q^n \zeta^\ell \in J^{!}_{0,L,1}.
$$
Assume that $f(n,\ell)\in \ZZ$ for all $2n-(\ell,\ell)\leq 0$.   We set
\begin{align*}
&A=\frac{1}{24}\sum_{\ell\in L^\vee}f(0,\ell),& &\vec{B}=\frac{1}{2}\sum_{\ell>0} f(0,\ell)\ell,& &C=\frac{1}{2\rank(L)}\sum_{\ell\in L^\vee}f(0,\ell)(\ell,\ell).&
\end{align*}
Then the infinite product
$$
\Borch(\varphi)(Z)=q^A \zeta^{\vec{B}} \xi^C\prod_{\substack{n,m\in\ZZ, \ell\in L^\vee\\ (n,\ell,m)>0}}(1-q^n \zeta^\ell \xi^m)^{f(nm,\ell)}
$$
defines a meromorphic modular form of weight $f(0,0)/2$ for $\widetilde{\Orth}^+(2U\oplus L(-1))$ with a character $\chi$,
where $Z= (\tau,\mathfrak{z}, \omega) \in \cH(L)$, $q=\exp(2\pi i \tau)$, $\zeta^\ell=\exp(2\pi i (\ell, \mathfrak{z}))$, $\xi=\exp(2\pi i \omega)$, the notation $(n,\ell,m)>0$ means that either $m>0$, or $m=0$ 
and $n>0$, or $m=n=0$ and $\ell<0$. The character $\chi$ is induced by
\begin{align*}
&\chi \lvert_{\SL_2(\ZZ)}=v_\eta^{24A},& &\chi \lvert_{H(L)}([\lambda,\mu; r])=e^{\pi i C((\lambda,\lambda)+(\mu, \mu )- (\lambda, \mu ) +2r)},& & \chi(V)=(-1)^D,&
\end{align*}
where $V: (\tau,\mathfrak{z}, \omega) \to (\omega,\mathfrak{z},\tau)$ and $D=\sum_{n<0}\sigma_0(-n) f(n,0)$. The poles and zeros of $\Borch(\varphi)$ lie on the rational quadratic divisors $\cD_v(2U\oplus L(-1))$, where $v\in 2U\oplus L^\vee(-1)$ is a primitive vector of negative norm. The multiplicity of this divisor is given by 
$$ 
\operatorname{mult} \cD_v(2U\oplus L(-1)) = \sum_{d\in \ZZ,d>0 } f(d^2n,d\ell),
$$
where $n\in\ZZ$, $\ell\in L^\vee$ such that $(v,v)=2n-(\ell,\ell)$ and $v\equiv \ell\mod 2U\oplus L(-1)$.
Moreover, we have
$$ 
\Borch(\varphi)=\psi_{L,C}(\tau,\mathfrak{z})\xi^C \exp \left(-\Grit(\varphi) \right),
$$
where the leading Fourier-Jacobi coefficient is a generalized theta block of the form
\begin{equation*}
\psi_{L,C}(\tau,\mathfrak{z})=\eta(\tau)^{f(0,0)}\prod_{\ell >0}\left(\frac{\vartheta(\tau,(\ell,\mathfrak{z}))}{\eta(\tau)} \right)^{f(0,\ell)}.
\end{equation*}
\end{theorem}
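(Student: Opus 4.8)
The plan is to deduce the theorem from Borcherds' construction of the singular theta lift \cite[Theorem 13.3]{Bor98} via the theta decomposition of Jacobi forms of lattice index. Write $r=\rank(L)$, so that $M=2U\oplus L(-1)$ has signature $(2,2+r)$. Every $\varphi\in J^{!}_{0,L,1}$ has a theta expansion
$$
\varphi(\tau,\mathfrak{z})=\sum_{\gamma\in L^\vee/L}h_\gamma(\tau)\,\Theta_{L,\gamma}(\tau,\mathfrak{z}),
$$
where the $\Theta_{L,\gamma}$ are the index-one Jacobi theta functions attached to the cosets $\gamma\in L^\vee/L$ and $h=(h_\gamma)_\gamma$ is a vector-valued nearly holomorphic modular form of weight $-r/2=1-(2+r)/2$ for the Weil representation of the finite quadratic module $(L^\vee/L,\,x\mapsto -(x,x)/2)$. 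Since $2U$ is unimodular, this is exactly the Weil representation attached to the discriminant form of $M$, so $h$ is an admissible input for the Borcherds lift on $M$. Realising $\cD(M)$ as the tube domain $\cH(L)$ through the splitting $M=U\oplus U\oplus L(-1)$ and using the ordering $\ell>0$ in $L^\vee$ to single out a Weyl chamber, the Borcherds lift of $h$ becomes a meromorphic function with an infinite product expansion; the assertion of the theorem is then the statement that this product equals $\Borch(\varphi)$, with the stated weight, divisor, prefactor and character.

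Granting the identification of the underlying Weil representations, several of these assertions are direct transcriptions of \cite[Theorem 13.3]{Bor98} once the coefficients $f(n,\ell)$ with $2n-(\ell,\ell)\le0$ are matched with the principal part of $h$: that $\Borch(\varphi)$ is a meromorphic modular form for $\widetilde{\Orth}^+(M)$ of weight $f(0,0)/2$ (half the coefficient of $q^0\zeta^0$ in $\varphi$); that its divisor is supported on the rational quadratic divisors $\cD_v(M)$ for primitive $v\in 2U\oplus L^\vee(-1)$ of negative norm; and that its multiplicity along $\cD_v(M)$ equals $\sum_{d>0}f(d^2n,d\ell)$, where $(v,v)=2n-(\ell,\ell)$ and $v\equiv\ell\bmod 2U\oplus L(-1)$ --- this being Borcherds' multiplicity $\sum_{d>0}c_{v/d}\bigl((v,v)/(2d^2)\bigr)$ rewritten through the theta decomposition. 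What remains is to (i) compute the Weyl vector and recognise $q^A\zeta^{\vec B}\xi^C$ as the prefactor; (ii) determine $\chi$; and (iii) establish the theta-block factorization.

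For (i) and (ii): the Weyl vector of the chosen chamber is computed from the $q^0$-part $\sum_\ell f(0,\ell)\zeta^\ell$ of $\varphi$ by the standard argument (cf.\ \cite{GN98,Gri18}), and its components in the three coordinate directions are precisely $A=\tfrac1{24}\sum_\ell f(0,\ell)$, $\vec B=\tfrac12\sum_{\ell>0}f(0,\ell)\ell$ (the half-sum over positive $\ell$ taking the role of the half-sum of positive roots), and $C=\tfrac1{2r}\sum_\ell f(0,\ell)(\ell,\ell)$. The character is then read off from the automorphy factor of $q^A\zeta^{\vec B}\xi^C$ together with the multiplier systems of the theta functions and of the $\eta$-power occurring in the leading coefficient: this gives $\chi\lvert_{\SL_2(\ZZ)}=v_\eta^{24A}$ and $\chi\lvert_{H(L)}([\lambda,\mu;r])=e^{\pi i C((\lambda,\lambda)+(\mu,\mu)-(\lambda,\mu)+2r)}$. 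The value $\chi(V)=(-1)^D$ on the Fourier--Jacobi involution $V\colon(\tau,\mathfrak{z},\omega)\mapsto(\omega,\mathfrak{z},\tau)$, with $D=\sum_{n<0}\sigma_0(-n)f(n,0)$, comes from comparing the product expansions of $\Borch(\varphi)$ at the two rational $0$-dimensional cusps attached to $2U$, equivalently from tracking the behaviour of $h$ near the two boundary components. Keeping careful account of this sign is the most delicate step and is where I expect the main obstacle to lie.

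For (iii): take $-\log$ of the product and expand $-\log(1-x)=\sum_{k\ge1}x^k/k$, so that $-\log\Borch(\varphi)$ equals $-2\pi i(A\tau+(\vec B,\mathfrak{z})+C\omega)+\sum_{(n,\ell,m)>0}f(nm,\ell)\sum_{k\ge1}k^{-1}q^{kn}\zeta^{k\ell}\xi^{km}$, and split the last sum according to whether $m>0$ or $m=0$. Collecting equal powers of $\xi$ in the $m>0$ part and comparing with the Hecke operators of Proposition \ref{prop:CG1} (for conductor $1$) identifies this part with $-\Grit(\varphi)$, where $\Grit(\varphi)$ is the formal additive lift of $\varphi$, a well-defined meromorphic function for $\im\omega\gg0$. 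The $m=0$ part, taken together with the prefactor $q^A\zeta^{\vec B}$ (the factor $\xi^C$ being left untouched), reassembles through the Jacobi triple product for $\vartheta$ into $\log\bigl(\eta(\tau)^{f(0,0)}\prod_{\ell>0}(\vartheta(\tau,(\ell,\mathfrak{z}))/\eta(\tau))^{f(0,\ell)}\bigr)=\log\psi_{L,C}(\tau,\mathfrak{z})$. Exponentiating yields $\Borch(\varphi)=\psi_{L,C}(\tau,\mathfrak{z})\,\xi^C\exp(-\Grit(\varphi))$, so that the leading Fourier--Jacobi coefficient is the stated theta block; all rearrangements are valid in a neighbourhood of the cusp, and the identities extend by analytic continuation. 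Combining (i)--(iii) with the reduction above completes the proof.
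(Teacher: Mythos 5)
This theorem is quoted in the paper without proof (it is imported verbatim from Theorem 4.2 of \cite{Gri18}, going back to Borcherds and Gritsenko--Nikulin), so there is no internal argument to compare against; your sketch --- theta decomposition of $\varphi$ into a vector-valued nearly holomorphic form for the Weil representation of $L(-1)$, application of Borcherds' Theorem 13.3 on $2U\oplus L(-1)$, computation of the Weyl vector $(A,\vec B,C)$ from the $q^0$-term, and the logarithmic product-to-sum rearrangement identifying the $m>0$ part with $-\Grit(\varphi)$ and the $m=0$ part with the theta block --- is precisely the standard proof given in the cited references. It is correct as a proof outline and takes essentially the same route as the source the paper relies on.
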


\subsection{The sufficient condition to be free algebras}
We state the sufficient condition for the graded algebra of modular forms being free proved in \cite{Wan20}, which plays a vital role in the proof of Theorem \ref{MTH}. 

\begin{theorem}[see Theorem 5.1 in \cite{Wan20}]\label{th:converseJacobian}
Let $M$ be an even lattice of signature $(2,n)$ with $n\geq 3$ and $\Gamma$ be a finite index subgroup of $\Orth^+(M)$. If there exists a modular form $F$ (with a character) on $\Gamma$  which vanishes exactly on all mirrors of reflections in $\Gamma$ with multiplicity one and equals the Jacobian determinant of $n+1$ certain modular forms on $\Gamma$ (with trivial character), then the graded algebra $M_*(\Gamma)$ is freely generated by the $n+1$ modular forms. Moreover, the group $\Gamma$ is generated by all reflections whose mirrors are contained in the divisor of the modular form $F$.
\end{theorem}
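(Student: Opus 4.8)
The goal is to show that the finite morphism encoded by the inclusion $R:=\CC[f_0,\dots,f_n]\subseteq A:=M_*(\Gamma)$ is an isomorphism, where $f_0,\dots,f_n$ are the $n+1$ generators (of weights $k_0,\dots,k_n$) and $F=J(f_0,\dots,f_n)$ is their Jacobian, a modular form of weight $k_0+\dots+k_n+n$ for $\Gamma$ with the determinant character. Since $F=J\not\equiv0$, the $f_i$ are algebraically independent (a polynomial relation would differentiate to $J\equiv0$), so $R$ is a polynomial ring of the same dimension $n+1$ as $A$, and $\Phi=(f_0,\dots,f_n)$ defines a $\Gamma$-invariant map $\cA(M)\to\CC^{n+1}$ factoring as $\cA(M)\xrightarrow{\rho}\cA(M)/\Gamma\xrightarrow{\bar\Phi}\CC^{n+1}=\operatorname{Spec}R$. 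I will prove that $\bar\Phi$ is (i) finite, (ii) unramified (étale) in codimension one, and (iii) hence an isomorphism; this forces $A=R$, i.e. $M_*(\Gamma)=\CC[f_0,\dots,f_n]$ is free. Throughout I use the normality of rings of modular forms (normality of $\cD(M)/\Gamma$ together with the Koecher principle).

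For (ii) I would argue locally in codimension one. In $\Orth^+(M)$ every pseudo-reflection is one of the order-two reflections $\sigma_r$ (an isometry fixing a hyperplane pointwise scales the normal line by $\pm1$); hence $\cD(M)/\Gamma$ is smooth in codimension one, with $\rho$ ramified exactly along the mirrors $\cD_r$, each with index two. Off the mirrors $J\neq0$, so $\Phi$ is a local biholomorphism there. Along a mirror $\{w=0\}$ the invariance $f_i(w,u)=f_i(-w,u)$ makes each $f_i$ a function $g_i(w^2,u)$, and the condition that $F=J$ vanishes to order \emph{exactly} one on $\cD_r$ is equivalent to the nondegeneracy of the matrix $\big[\partial_{w^2}g_i\ \vert\ \partial_{u}g_i\big]$ at the generic point of $\cD_r$, i.e. to $\bar\Phi$ being a local isomorphism on the quotient across the mirror. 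Thus $\bar\Phi$ is étale in codimension one; equivalently $\operatorname{div}(J)=\mathcal R_\rho+\rho^{*}\operatorname{Ram}(\bar\Phi)$ with $\mathcal R_\rho=\sum_{\cD_r}\cD_r$, and the hypothesis $\operatorname{div}(F)=\sum_{\cD_r}\cD_r$ forces $\rho^{*}\operatorname{Ram}(\bar\Phi)=0$.

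For (i) I must show the $f_i$ form a homogeneous system of parameters, i.e. have no common zero on the projective Baily--Borel variety $\operatorname{Proj}A$. The Euler relation $\sum_j z_j\partial_jf_i=-k_if_i$ shows that at any common zero the radial vector field lies in the kernel of the Jacobian, so $J$ vanishes there; as $J=F$ vanishes only along the mirrors, every interior common zero lies on some $\cD_r$. Restricting to a mirror, which is itself a type IV domain for $r^{\perp}$ of signature $(2,n-1)$, the $f_i$ restrict to modular forms whose inherited Jacobian again vanishes to order one along the sub-mirrors; an induction on $n$ therefore pushes the common-zero locus into deeper and deeper reflection strata, reducing the problem to the finitely many Baily--Borel cusps, of dimension at most one, which must be excluded by a direct analysis of the $f_i$ there. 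I expect this finiteness step---controlling base points through the stratification by mirrors and down to the cusps---to be the main obstacle, since it is where the global geometry, rather than the clean local Jacobian bookkeeping, must be confronted.

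With (i) and (ii) in hand, the Zariski--Nagata purity theorem applies: $\bar\Phi$ is finite from the normal $\operatorname{Spec}A$ to the regular $\mathbb A^{n+1}_{\CC}$ and unramified in codimension one, so its branch locus, being simultaneously pure of codimension one and of codimension $\ge2$, is empty; thus $\bar\Phi$ is finite étale. Since $\mathbb A^{n+1}_{\CC}$ is simply connected and $\operatorname{Spec}A$ is connected, $\bar\Phi$ has degree one and is an isomorphism, giving $A=R$ graded-compatibly, so $M_*(\Gamma)$ is freely generated by $f_0,\dots,f_n$. For the final assertion, let $W\le\Gamma$ be generated by the reflections $\sigma_r$ with $\cD_r\subseteq\operatorname{div}(F)$; because $F$ vanishes on \emph{all} mirrors of $\Gamma$, the reflections of $W$ are precisely those of $\Gamma$, so the mirror set of $W$ equals $\operatorname{div}(F)$ and the triple $(F;f_0,\dots,f_n)$ satisfies the hypotheses with $W$ in place of $\Gamma$. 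Applying the result just proved to $W$ gives $M_*(W)=R=M_*(\Gamma)$; equal graded algebras have equal fields of modular functions, whence $[\Gamma:W]=1$ and $\Gamma=W$ is generated by the reflections with mirrors in $\operatorname{div}(F)$.
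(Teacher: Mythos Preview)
The present paper does not prove this theorem: it is quoted from \cite{Wan20} (Theorem~5.1 there), with only the remark afterward that the Jacobian carries the determinant character and hence vanishes on every mirror. So there is no in-paper proof to compare against; what follows compares your sketch with the method of \cite{Wan20}.

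Your step (ii) is correct and is the heart of the matter. The difficulty you flag in step (i) is real, and the induction you propose does not go through. Restricting the $n{+}1$ forms $f_i$ to a mirror $\cD_r$ gives $n{+}1$ modular forms on an $n$-dimensional domain; there is no natural ``inherited Jacobian'' of the full collection (you would have to choose $n$ of them), and nothing in the hypothesis forces any such choice to vanish precisely on the sub-mirrors with multiplicity one. The inductive hypothesis is therefore not reproduced on the mirror, and the argument stalls before ever reaching the cusps.

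The proof in \cite{Wan20} avoids the finiteness question entirely by a direct induction on the weight. For $G\in M_k(\Gamma)$ and each $i$, the Jacobian $J(f_0,\dots,G,\dots,f_n)$ (with $G$ in the $i$-th slot) carries the determinant character and so vanishes on every mirror; since $F$ vanishes \emph{exactly} on the mirrors with multiplicity one, the quotient $h_i$ is a holomorphic form in $M_{k-k_i}(\Gamma)$. Cramer's rule gives $dG=\sum_i h_i\,df_i$, and contraction with the Euler field yields $kG=\sum_i k_i h_i f_i$; since each $h_i$ has strictly smaller weight, induction places it in $\CC[f_0,\dots,f_n]$, hence also $G$. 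This is elementary and makes the Zariski--Nagata machinery unnecessary. Your final paragraph also has a small circularity: applying the theorem to $W$ presupposes that $W$ has finite index in $\Orth^+(M)$, which is not yet known; once $M_*(\Gamma)=\CC[f_0,\dots,f_n]$ is established, one instead deduces $\Gamma=W$ by a covering-space argument (the deck group of $\cD(M)\to\cD(M)/\Gamma$ over the unramified locus is generated by the inertia at the mirrors).
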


In the above theorem, the Jacobian determinant of modular forms means the Rankin-Cohen-Ibukiyama differential operators introduced in \cite{AI05}. A precise definition and some basic properties can be found in \cite[Theorem 2.5]{Wan20}. We mention that the Jacobian is not identically zero if and only if these forms are algebraically independent over $\CC$. In addition, the Jacobian has the determinant character on $\Gamma$ and thus vanishes on all mirrors of reflections in $\Gamma$.  Hence it is rather easy to verify if a reflective Borcherds product equals a Jacobian of some modular forms. For example, if $F$ is a Borcherds product vanishing exactly on all mirrors of reflections in $\Gamma$ with multiplicity one and the same weight modular form $J$ is a Jacobian of $n+1$ modular forms with trivial character on $\Gamma$, then they are equal up to a constant multiple because the quotient $J/F$ defines a holomorphic modular form of weight $0$.

\section{Free algebras of modular forms}\label{Sec:freealgebras}
Let  $L=A_1(2)$, $A_1(3)$, $A_1(4)$, $2A_1(2)$, $A_2(2)$, $A_2(3)$, $A_3(2)$, or $D_4(2)$. In this section,  we find one reflection subgroup of $\Orth^+(2U\oplus A_1(-4))$ and three reflection subgroups of $\Orth^+(2U\oplus D_4(-2))$ such that the associated algebras of modular forms are freely generated.
For every remaining lattice $L$, we determine two reflection subgroups $\Gamma$ of $\Orth^+(2U\oplus L(-1))$ such that $M_*(\Gamma)$ is freely generated.  This gives a proof of Theorem \ref{MTH}.

We first introduce some basic results which will be used later. It is easy to check that the above eight lattices $L$ satisfy the following $\norm_2$ condition: (see \cite[\S 4]{GN18})
\begin{equation}
\norm_2:\ \forall\, \bar{c} \in L^\vee/L \quad \exists\, h_c \in \bar{c} 
\quad \text{such that} \quad (h_c,h_c)\leq 2.
\end{equation} 
Let $M=2U\oplus L(-1)$. For every above $L$, we have the following simple facts:
\begin{enumerate}
\item The natural homomorphism $\Orth(L)\to \Orth(L^\vee/L)$ is surjective. Thus if the input is invariant up to a character under the action of $\Orth(L)$ then the additive lift is a modular form for $\Orth^+(M)$ with a character. If the input is invariant under $\Orth(L)$ then the Borcherds product is a modular form for $\Orth^+(M)$ with a character. 
\item (\cite[Lemma 3.5]{GW20}) Let $\phi\in J_{0,L,1}^w$ with integral Fourier coefficients.  The divisors of $\Borch(\phi)$ are determined entirely by the $q^0$-term of $\phi$, i.e., any divisor of $\Borch(\phi)$ is of the form $\cD_{(0,0,\ell,1,0)}$ up to the $\widetilde{\Orth}^+(M)$-action.
\item (\cite[Theorem 4.4]{GN18}) The pull-back of the Borcherds form $\Phi_{12}$ for $\II_{2,26}$ gives a modular form $\Phi_{12,L}$ of weight 12 and a character of order 2 for $\Orth^+(M)$ with complete 2-divisor, i.e. $\Phi_{12,L}$ vanishes exactly with multiplicity one on $\cD_r$ for all $r\in M$ with $(r,r)=-2$. 

\cite[Theorem 4.4]{GN18} only asserts that $\Phi_{12,L}$ is a modular form for $\widetilde{\Orth}^+(M)$ with the character $\det$. But $\Phi_{12,L}$ is a Borcherds product of a vector-valued nearly holomorphic modular form $f$ for the Weil representation of $\SL_2(\ZZ)$ attached to $D(L)$. We replace $f$ with the $\Orth(D(L))$-invariant modular form $\frac{1}{\lvert\Orth(D(L))\rvert}\sum_{\gamma\in \Orth(D(L))} f\vert \gamma$ as the input of Borcherds product. Then we get a modular form of weight 12 with the same divisor for $\Orth^+(M)$. Thus the two modular forms are the same and we prove the above claim.
\item There are Jacobi Eisenstein series of weights 4 and 6 for $L$, i.e. $E_{4,L}=1+O(q)\in J_{4,L,1}$ and $E_{6,L}=1+O(q)\in J_{6,L,1}$. We can assume that they are invariant with respect to $\Orth(L)$. Then their additive lifts denoted by $\cE_{4,L}$ and $\cE_{6,L}$ are modular forms of weights 4 and 6 for $\Orth^+(M)$, respectively. 
\end{enumerate}

In the next subsections we use Theorem \ref{th:converseJacobian} to prove Theorem \ref{MTH}. For every lattice $L$ of rank $l$, we first construct $l+3$ basic modular forms as additive lifts introduced in Theorem \ref{th:CG2}. The first non-zero Fourier--Jacobi coefficients of the $l+3$ additive lifts are respectively the two $\SL_2(\ZZ)$ Eisenstein series $E_4$, $E_6$ and the $l+1$ generators of the bigraded ring of Weyl invariant weak Jacobi forms introduced in \cite{Wir92}. The algebraic independence of the $l+1$ elementary  Jacobi forms and the two Eisenstein series yield the non-vanishing of the Jacobian of the $l+3$ additive lifts. We then construct some basic reflective modular forms using Borcherds products.  By considering the product of some basic reflective modular forms, we further construct a reflective modular form $F$ which has the same weight as the Jacobian. The reflections corresponding to the divisor of $F$ generate a group $\Gamma$. We show that these additive lifts are modular forms with trivial character for $\Gamma$, which implies that the Jacobian is equal to $F$ up to a constant multiple. This proves the freeness of $M_*(\Gamma)$ by Theorem \ref{th:converseJacobian}. We only give a full proof of the case $L=A_1(2)$ because the proofs of other cases are similar.

\subsection{The \texorpdfstring{$A_1$}{A1} case}
We consider the cases of $2U\oplus A_1(-n)$ for $n=2,3,4$. We know from \cite{EZ85} that the bigraded ring of weak Jacobi forms of even weight and integral index for $A_1$ is freely generated over the ring $M_*(\SL_2(\ZZ))$ of $\SL_2(\ZZ)$-modular forms by two forms of index 1 and weights $-2$ and 0. We fix the following model of $A_1$
$$
A_1=\latt{\ZZ \cdot e, e^2=2}, \quad \mathfrak{z}=ze, z\in \CC, \; \zeta=e^{2\pi i z}.
$$
Under this coordinate, the mentioned generators have the following Fourier expansions:
\begin{align*}
\phi_{0,1}&=\zeta + \zeta^{-1} +10 \in J_{0,1}^w,\\
\phi_{-2,1}&=\phi_{-1,\frac{1}{2}}^2=\zeta + \zeta^{-1} -2 \in J_{-2,1}^w, \quad  \phi_{-1,\frac{1}{2}}=\frac{\vartheta(\tau,z)}{\eta^3(\tau)}. 
\end{align*}
We also need three weak Jacobi forms of weight 0 which are the generators of the ring of weak Jacobi forms of weight 0 and integral index with integral Fourier coefficients (see \cite[\S 3.1]{GW18}).
\begin{align*}
\phi_{0,2}&=\zeta + \zeta^{-1} + 4 \in J_{0,2}^w,\\
\phi_{0,3}&=\zeta + \zeta^{-1} + 2 \in J_{0,3}^w,\\
\phi_{0,4}&=\zeta + \zeta^{-1} + 1 \in J_{0,4}^w.
\end{align*}
We next construct some modular forms using these basic Jacobi forms. Most of them have been constructed in \cite{GN98}. But for convenience we repeat their constructions.

\subsubsection{The case of $n=2$}
There are three Borcherds products:
\begin{enumerate}
\item $\Borch(\phi_{0,2})\in M_2(\Orth^+(2U\oplus A_1(-2)), \chi_1)$. Its divisor is the sum of $\cD_v$ with multiplicity one for $v\in 2U\oplus A_1(-2)$ primitive, $(v,v)=-4$, and $\div(v)=4$. 
\item $\Borch(\phi^{(1)}_{0,2})\in M_{11}(\Orth^+(2U\oplus A_1(-2)),\chi_2)$. Its divisor is the sum of $\cD_u$ and $\cD_v$ with multiplicity one, where $u\in 2U\oplus A_1(-2)$ primitive, $(u,u)=-4$ and $\div(u)=2$,  and $v\in 2U\oplus A_1(-2)$ primitive, $(v,v)=-4$ and $\div(v)=4$. The function $\phi^{(1)}_{0,2}$ is defined as
$$
\phi^{(1)}_{0,2}=\phi_{0,1}^2-20\phi_{0,2}=\zeta^{\pm 2} +22 + O(q).
$$
\item $\Phi_{12, A_1(2)} \in M_{12}(\Orth^+(2U\oplus A_1(-2)), \chi_3)$. Its divisor is the sum of $\cD_r$ with multiplicity one for $r\in 2U\oplus A_1(-2)$ with $(r,r)=-2$. 
\end{enumerate}
We also construct two additive lifts
\begin{align*}
\Grit(\eta^{12}\phi_{0,1})\in M_6(\Orth^+(2U\oplus A_1(-2)), \chi_4),\\
\Grit(\eta^{6}\phi_{-1,\frac{1}{2}})\in M_2(\Orth^+(2U\oplus A_1(-2)), \chi_5).
\end{align*}

These five characters can be worked out. But characters are not important for us in this paper. Thus we do not write them explicitly. For the remaining cases, when we say that a modular form has a character $\chi$, it only means that this form has a non-trivial character. The same symbol $\chi$ may stand for different characters of different groups.

It is easy to check that $\Grit(\eta^{6}\phi_{-1,\frac{1}{2}})$ vanishes on all divisors of $\Borch(\phi_{0,2})$ (see \cite{GN98} or \cite{GW20}). We then conclude that $\Grit(\eta^{6}\phi_{-1,\frac{1}{2}})=\Borch(\phi_{0,2})$.

\begin{theorem}\label{th:A1-2}
Let $\Gamma_{2,4}(A_1(2))$ be the subgroup of $\Orth^+(2U\oplus A_1(-2))$ generated by reflections associated to vectors of types $r$, $u$ and $v$ above.  Let $\Gamma_{2,4'}(A_1(2))$ be the subgroup of $\Orth^+(2U\oplus A_1(-2))$ generated by reflections associated to vectors of types $r$ and $u$ above.  
\begin{enumerate}
\item The graded algebra $M_*(\Gamma_{2,4}(A_1(2)))$ is freely generated by $\cE_{4, A_1(2)}$, $\cE_{6, A_1(2)}$, $\Grit(\eta^{12}\phi_{0,1})$ and $\Grit(\eta^{6}\phi_{-1,\frac{1}{2}})^2$. The Jacobian determinant equals $\Borch(\phi^{(1)}_{0,2})\Phi_{12, A_1(2)}$ up to a constant. $(23-3=4+6+6+4)$
\item The graded algebra $M_*(\Gamma_{2,4'}(A_1(2)))$ is freely generated by $\cE_{4, A_1(2)}$, $\cE_{6, A_1(2)}$, $\Grit(\eta^{12}\phi_{0,1})$ and $\Grit(\eta^{6}\phi_{-1,\frac{1}{2}})$. The Jacobian determinant equals $\Borch(\phi^{(1)}_{0,2})\Phi_{12, A_1(2)}/\Borch(\phi_{0,2})$ up to a constant. $(21-3=4+6+6+2)$
\end{enumerate}
\end{theorem}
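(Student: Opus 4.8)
The plan is to invoke Theorem~\ref{th:converseJacobian} twice, once for each group. The common ingredients are: (i) a reflective modular form whose divisor is exactly the union of the relevant mirrors with multiplicity one; (ii) a set of $n+1 = 4$ basic modular forms (for $n = \rank(2U\oplus A_1(-2)) = 3$, we need $4$ generators) that are modular with \emph{trivial} character on $\Gamma$ and whose Jacobian has the right weight; (iii) a verification that the Jacobian equals the reflective form up to a constant, which by the remark after Theorem~\ref{th:converseJacobian} follows once the weights match and the quotient is a holomorphic weight-$0$ modular form. For part~(1), I would take $F = \Borch(\phi^{(1)}_{0,2})\,\Phi_{12,A_1(2)}$, of weight $11+12 = 23$; its divisor is $\cD_r + \cD_u + \cD_v$ (all with multiplicity one), so the reflections it detects are exactly those of types $r,u,v$, generating $\Gamma_{2,4}(A_1(2))$ by the last sentence of Theorem~\ref{th:converseJacobian}. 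For part~(2), I would instead take $F = \Borch(\phi^{(1)}_{0,2})\,\Phi_{12,A_1(2)}/\Borch(\phi_{0,2})$; since $\Borch(\phi_{0,2})$ has divisor $\cD_v$ with multiplicity one (equivalently $\Grit(\eta^6\phi_{-1,1/2})$, already identified in the text), this quotient is a holomorphic modular form of weight $23 - 2 = 21$ with divisor exactly $\cD_r + \cD_u$, detecting the reflections of types $r$ and $u$, which generate $\Gamma_{2,4'}(A_1(2))$.

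Next I would check the weight bookkeeping and algebraic independence. The four proposed generators in (1) have weights $4, 6, 6, 4$, summing to $20$, and the Jacobian of $n+1$ forms of weights $k_1,\dots,k_{n+1}$ on a signature-$(2,n)$ lattice has weight $k_1+\cdots+k_{n+1} + n = 20 + 3 = 23$, matching $\deg F$; this is the arithmetic recorded as $23-3 = 4+6+6+4$ in the statement. In (2) the weights $4,6,6,2$ sum to $18$, and $18 + 3 = 21 = \deg F$, matching the second parenthetical. For algebraic independence: the first nonzero Fourier--Jacobi coefficients of $\cE_{4,A_1(2)}$, $\cE_{6,A_1(2)}$, $\Grit(\eta^{12}\phi_{0,1})$, $\Grit(\eta^6\phi_{-1,1/2})$ are (up to normalization) $E_4, E_6, \phi_{0,1}, \phi_{-1,1/2}$ — and $E_4, E_6$ together with the two generators $\phi_{0,1}, \phi_{-2,1} = \phi_{-1,1/2}^2$ of Wirthm\"uller's ring freely generate the bigraded ring of weak Jacobi forms for $A_1$ over $M_*(\SL_2(\ZZ))$, so they are algebraically independent; hence so are the four lifts, and their Jacobian is not identically zero. (In case (1) one uses $\Grit(\eta^6\phi_{-1,1/2})^2$, whose leading coefficient is $\phi_{-1,1/2}^2 = \phi_{-2,1}$, which is exactly the other Wirthm\"uller generator — this is why the square is taken.) Then $J/F$ is a holomorphic modular form of weight $0$ on $\Gamma$, hence constant, provided $J$ has trivial character on $\Gamma$.

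The main obstacle — and the only genuinely case-specific point — is step (iii): showing that the Jacobian $J$ of the four generators has \emph{trivial} character on $\Gamma$. The Jacobian automatically carries the determinant character of $\Gamma \subset \Orth^+(M)$, and $F$ (a Borcherds product, possibly divided by another Borcherds product) carries its own Borcherds character; what one needs is that these agree, equivalently that $J/F$ is genuinely modular (not just modular up to a finite character) on the specific group $\Gamma$ generated by the reflections in $\div F$. Concretely I would argue: $J/F$ is a meromorphic modular form of weight $0$ with trivial divisor for the \emph{full} discriminant kernel up to characters, hence a nowhere-zero holomorphic function times a character; since $\Gamma$ is generated by reflections and $J$, $F$ both vanish to order exactly one along each mirror of each such reflection, the ratio is invariant under each generating reflection, so the character is trivial on $\Gamma$ and $J/F$ is a nonzero constant. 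Applying Theorem~\ref{th:converseJacobian} then yields that $M_*(\Gamma_{2,4}(A_1(2)))$, respectively $M_*(\Gamma_{2,4'}(A_1(2)))$, is freely generated by the listed forms. I expect the remaining verifications (that $\Borch(\phi^{(1)}_{0,2})$ indeed has the stated divisor $\cD_u + \cD_v$, using item~(2) in the preliminaries to read divisors off the $q^0$-term $\zeta^{\pm 2} + 22$, and that $\phi^{(1)}_{0,2} = \phi_{0,1}^2 - 20\phi_{0,2}$ lies in $J^!_{0,A_1,2}$ with the claimed principal part) to be routine Fourier-coefficient computations.
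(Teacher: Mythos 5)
Your overall architecture matches the paper's: Theorem \ref{th:converseJacobian} applied with $F=\Borch(\phi^{(1)}_{0,2})\Phi_{12,A_1(2)}$ (resp.\ its quotient by $\Borch(\phi_{0,2})$), algebraic independence of the four lifts read off from the leading Fourier--Jacobi coefficients $E_4$, $E_6$, $\phi_{0,1}$, $\phi_{-1,\frac{1}{2}}$, and a weight-and-divisor comparison to identify $F$ with the Jacobian. You also correctly isolate the crux, namely that the additive lifts have trivial character on the reflection group. But your argument for that crux is circular: you deduce invariance of $J/F$ under a generating reflection $\sigma$ from the claim that $J$ ``vanishes to order exactly one along each mirror,'' yet $J$ carries the character $\det\cdot\chi_4\chi_5$ on $\Orth^+(2U\oplus A_1(-2))$, and its vanishing along $\cD_r$ is \emph{equivalent} to $(\det\cdot\chi_4\chi_5)(\sigma_r)=-1$, i.e.\ to $\chi_4(\sigma_r)\chi_5(\sigma_r)=1$ --- essentially the statement you are trying to establish. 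The paper closes this non-circularly: it sets $\Gamma_0=\ker\chi_4\cap\ker\chi_5$, on which the four lifts are tautologically modular with trivial character, and shows each reflection of type $r$ (and $u$) lies in $\Gamma_0$ by noting that otherwise $\chi_4(\sigma_r)\neq 1$ or $\chi_5(\sigma_r)\neq 1$ would force the corresponding lift to vanish on all the conjugate mirrors $\cD_r$, so its quotient by $\Phi_{12,A_1(2)}$ would be holomorphic of negative weight --- a contradiction.

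A second, smaller gap: you apply Theorem \ref{th:converseJacobian} directly to $\Gamma$ defined by its generating reflections and assert that $\div F$ is exactly the union of mirrors of reflections in $\Gamma$. The hypothesis of that theorem concerns \emph{all} reflections in $\Gamma$, not only the chosen generators, and a product of reflections can again be a reflection of another type. One must either classify the reflective vectors of $2U\oplus A_1(-2)$ (types $r$, $u$, $v$ exhaust them, by the criterion $\div(l)=2d$ or $d$ for $(l,l)=-2d$) and check which reflections actually lie in the group, or do as the paper does: apply the theorem to $\Gamma_0$, whose reflections are controlled by the characters, and then invoke the theorem's final conclusion --- that $\Gamma_0$ is generated by the reflections whose mirrors lie in $\div F$ --- to identify $\Gamma_0$ with $\Gamma_{2,4'}(A_1(2))$ (and the analogous kernel $\ker\chi_4\cap\ker\chi_5^2$ with $\Gamma_{2,4}(A_1(2))$) only a posteriori.
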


\begin{proof}
We first prove the part (2).
Let $\Gamma_0$ be the intersection of the kernels of the two characters $\chi_4$ and $\chi_5$. Then it is a finite index subgroup of $\Orth^+(2U\oplus A_1(-2))$. Moreover, the four forms $\cE_{4, A_1(2)}$, $\cE_{6, A_1(2)}$, $\Grit(\eta^{12}\phi_{0,1})$ and $\Grit(\eta^{6}\phi_{-1,\frac{1}{2}})$ are modular forms for $\Gamma_0$ with trivial character, and their leading Fourier-Jacobi  coefficients are respectively the $\SL_2(\ZZ)$-Eisenstein series $E_4$, $E_6$, and the Jacobi forms $\eta^{12}\phi_{0,1}$, $\eta^{6}\phi_{-1,\frac{1}{2}}$. Since $E_4$, $E_6$, $\phi_{0,1}$ and $\phi_{-1,\frac{1}{2}}$ are algebraically independent over $\CC$, it follows that the four additive lifts are also algebraically independent over $\CC$. Therefore their Jacobian is not zero and defines a modular form of weight 21 on $\Gamma_0$ with the determinant character. 

We now show that all reflections associated to vectors of types $r$ and $u$ are contained in $\Gamma_0$. If there is a vector $r$ such that $\sigma_r\not\in \Gamma_0$, then $\chi_4(\sigma_r)\neq 1$ or $\chi_5(\sigma_r)\neq 1$. It follows that $\Grit(\eta^{12}\phi_{0,1})$ or $\Grit(\eta^{6}\phi_{-1,\frac{1}{2}})$ vanishes on $\cD_r$. Thus the quotient of $\Grit(\eta^{12}\phi_{0,1})$ or $\Grit(\eta^{6}\phi_{-1,\frac{1}{2}})$ by $\Phi_{12,A_1(2)}$ is holomorphic, which leads to a contradiction. Therefore all reflections associated to vectors of type $r$ are contained in $\Gamma_0$.  Similarly, we prove that all reflections associated to vectors of type $u$ are contained in $\Gamma_0$. 

From the above, we see that $\Borch(\phi^{(1)}_{0,2})\Phi_{12, A_1(2)}$ is a modular form (with a character) of weight 21 on $\Gamma_0$ whose divisor is a sum of some mirrors of reflections in $\Gamma_0$ with multiplicity one.  Recall that the Jacobian of generators vanishes exactly on all mirrors of reflections in $\Gamma_0$.  By comparing the weight and the divisor, we assert that this modular form equals the Jacobian of the four additive lifts up to a constant multiple. We then conclude from Theorem \ref{th:converseJacobian} that the graded algebra $M_*(\Gamma_0)$ is freely generated by the four additive lifts. Moreover, $\Gamma_0$ is generated by all reflections whose mirrors are contained in the divisor of the Jacobian. This implies that $\Gamma_0$ coincides with $\Gamma_{2,4'}(A_1(2))$.

Similarly, we show that $\Gamma_{2,4}(A_1(2))$ is the intersection of kernels of $\chi_4$ and $\chi_5^2$. We then prove that $M_*(\Gamma_{2,4}(A_1(2)))$ is freely generated by given generators. 
\end{proof}

As an application, we prove the modularity of formal Fourier-Jacobi expansions for $\Gamma_{2,4}(A_1(2))$.  
The space of formal Fourier-Jacobi expansions of integral weight $2k$ for $\Gamma_{2,4}$ is defined as
$$
FM_{2k}(\Gamma_{2,4}(A_1(2)))=\left\{ \sum_{m=0}^{\infty} \psi_m \xi^{m/2} \in \prod_{m=0}^\infty J_{2k,m}(v_\eta^{12m}) : f_m(n,r)=f_{2n}(m/2,r), \forall n,r,m \right\}
$$
where $f_m(n,r)$ are Fourier coefficients of $\psi_m$. 
\begin{corollary}\label{Cor:FFJ}
The following map is an isomorphism
\begin{align*}
M_{2k}(\Gamma_{2,4}(A_1(2))) &\to FM_{2k}(\Gamma_{2,4}(A_1(2))),\\
F &\mapsto \text{Fourier-Jacobi expansion of $F$}.
\end{align*}
\end{corollary}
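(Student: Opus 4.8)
The plan is to exploit the polynomial structure of $M_*(\Gamma_{2,4}(A_1(2)))$ provided by Theorem~\ref{th:A1-2}(1). Injectivity of the Fourier--Jacobi expansion map is immediate: a modular form is a holomorphic function on $\cH(A_1(2))$, hence is determined by its full Fourier expansion and a fortiori by its Fourier--Jacobi expansion, so the kernel is trivial. For surjectivity the first observation is that $FM_*(\Gamma_{2,4}(A_1(2))):=\bigoplus_k FM_{2k}(\Gamma_{2,4}(A_1(2)))$ is a graded $\CC$-algebra which contains $M_*(\Gamma_{2,4}(A_1(2)))$ through the expansion map: the multiplier pattern $v_\eta^{12m}$ is additive in $m$, and the symmetry relation $f_m(n,r)=f_{2n}(m/2,r)$, which encodes compatibility with the involution $V$ fixing the generators (the Gritsenko lifts of Theorem~\ref{th:CG2} satisfy $\chi(V)=1$), is stable under the convolution product of Fourier expansions. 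Thus it suffices to prove $FM_*(\Gamma_{2,4}(A_1(2)))=M_*(\Gamma_{2,4}(A_1(2)))$.

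A convenient first reduction is that the zeroth coefficient $\Phi\mapsto\psi_0$ is a graded ring homomorphism $FM_*(\Gamma_{2,4}(A_1(2)))\to M_*(\SL_2(\ZZ))$ whose restriction to $M_*(\Gamma_{2,4}(A_1(2)))$ is already surjective in every weight: the generators $\cE_{4,A_1(2)}$, $\cE_{6,A_1(2)}$, $\Grit(\eta^{12}\phi_{0,1})$ have zeroth Fourier--Jacobi coefficient proportional to $E_4$, $E_6$, $E_6$, while the fourth generator $\Grit(\eta^6\phi_{-1,\frac12})^2$ is a cusp form and has vanishing zeroth coefficient. Hence, after subtracting an element of $M_{2k}(\Gamma_{2,4}(A_1(2)))$, any $\Phi\in FM_{2k}(\Gamma_{2,4}(A_1(2)))$ may be assumed to have $\psi_0=0$; the symmetry relation then forces $f_m(0,r)=0$ for all $m$, so $\Phi$ lies in the ``$V$-cuspidal'' part, and one is reduced to matching such $\Phi$ with the Fourier--Jacobi expansion of a cusp form assembled from the explicit generators (where the identification $\Grit(\eta^6\phi_{-1,\frac12})=\Borch(\phi_{0,2})$, whose Fourier--Jacobi expansion is completely explicit, is helpful).

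To finish, I would show that $FM_*(\Gamma_{2,4}(A_1(2)))$ is a \emph{finitely generated module} over the polynomial ring $R:=M_*(\Gamma_{2,4}(A_1(2)))$: the idea is to use the symmetry relation $f_m(n,r)=f_{2n}(m/2,r)$ to bound the Fourier--Jacobi indices $m$ whose coefficients may be prescribed freely, and then to combine this with the fact that, for each fixed lattice index, the Jacobi forms of that index over all weights form a finitely generated module over $M_*(\SL_2(\ZZ))$, so as to land in a Noetherian situation. Granting this, $FM_*$ is integral over $R$; since $R$ is a polynomial ring, hence integrally closed, and each element of $FM_*$ is the Fourier--Jacobi expansion at the standard cusp of a meromorphic modular form $p/q$ with $p,q\in R$ having no pole along that $1$-dimensional cusp — and on $\Proj(R)$, a weighted projective space of Picard rank one, the closure of that cusp meets every non-zero effective divisor, so $p/q$ can have no pole at all — we conclude $p/q\in R$, i.e. $FM_*=R$, which gives the corollary. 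The main obstacle is precisely the module-finiteness of $FM_*$ over $R$, equivalently the statement that $\dim_\CC FM_{2k}$ is finite and equal to $\dim_\CC M_{2k}(\Gamma_{2,4}(A_1(2)))$: this is the $\mathrm{O}(2,n)$-analogue of the Bruinier--Raum type modularity results, and is where the freeness of $M_*(\Gamma_{2,4}(A_1(2)))$ and the explicit additive-lift structure of its generators are used in an essential way; the remaining steps are commutative-algebra bookkeeping with the symmetry relation and the geometry of $\Proj(R)$.
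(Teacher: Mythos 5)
Your injectivity argument and the reduction via the zeroth coefficient are fine in spirit (though note that $\Grit(\eta^{12}\phi_{0,1})$ has \emph{vanishing} zeroth Fourier--Jacobi coefficient, since $\eta^{12}\phi_{0,1}=O(q^{1/2})$ has no $(0,0)$-term and its expansion begins at $\xi^{1/2}$; the constant-term surjectivity you need is carried by $\cE_{4,A_1(2)}$ and $\cE_{6,A_1(2)}$ alone). The real problem is that the entire content of surjectivity is parked inside two steps you do not carry out. First, you never prove the module-finiteness of $FM_*$ over $R=M_*(\Gamma_{2,4}(A_1(2)))$; worse, you describe the ``main obstacle'' as the statement that $\dim_\CC FM_{2k}$ is finite and equal to $\dim_\CC M_{2k}$, which (given injectivity) \emph{is} the corollary --- so as written the argument is circular. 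Second, even granting module-finiteness, integrality over the integrally closed ring $R$ only forces $\Phi\in R$ if $\Phi$ lies in the fraction field of $R$; your assertion that every element of $FM_*$ ``is the Fourier--Jacobi expansion of a meromorphic modular form $p/q$ with $p,q\in R$'' is exactly the nontrivial algebraicity/rationality input of the Bruinier--Raum method and is not justified here (a domain module-finite over $R$ need not have the same fraction field as $R$).

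The paper avoids all of this machinery with a direct dimension count that you should compare against. The symmetry relation $f_m(n,r)=f_{2n}(m/2,r)$ forces the \emph{first} nonzero coefficient $\psi_m$ of any $\Phi\in FM_{2k}$ to satisfy $\psi_m=O(q^{m/2})$ (if $n<m/2$ then $\psi_{2n}=0$, hence $f_m(n,r)=f_{2n}(m/2,r)=0$); filtering by the order of vanishing in $\xi$ then gives $\dim FM_{2k}\le\sum_m\dim J_{2k,m}(v_\eta^{12m})[m/2]=\sum_m\dim J^w_{2k-6m,m}$, a finite sum. Conversely, any $\phi_m\in J_{2k,m}(v_\eta^{12m})[m/2]$ has $\phi_m/\eta^{12m}=P(E_4,E_6,\phi_{0,1},\phi_{-2,1})$ for a polynomial $P$, and $P(\cE_{4,A_1(2)},\cE_{6,A_1(2)},\Grit(\eta^{12}\phi_{0,1}),\Grit(\eta^{6}\phi_{-1,\frac{1}{2}})^2)$ is an honest modular form whose leading Fourier--Jacobi coefficient is $\phi_m\xi^{m/2}$ (both lift-generators begin at $\xi^{1/2}$ with leading terms $\eta^{12}\phi_{0,1}$ and $\eta^{12}\phi_{-2,1}$). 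This gives the matching lower bound $\dim M_{2k}\ge\sum_m\dim J^w_{2k-6m,m}$, hence equality and surjectivity. This is precisely where the explicit free generation enters; if you want to salvage your route, the filtration bound above is the missing ``Noetherian'' input, but once you have it the commutative algebra is no longer needed.
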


\begin{proof}
Firstly, the map is defined well by the shape of generators in the above theorem. The injectivity of the map is obvious. We next show that the map is surjective. On the one hand, by the symmetry of formal Fourier-Jacobi expansions and an argument similar to \cite[\S 3]{WW20}, we have
$$
\dim FM_{2k}(\Gamma_{2,4}(A_1(2))) \leq \sum_{m=0}^\infty \dim J_{2k,m}(v_\eta^{12m})[m/2]= \sum_{m=0}^\infty \dim J_{2k-6m,m}^w,
$$
where 
$$
J_{2k,m}(v_\eta^{12m})[m/2]=\{ \phi \in J_{2k,m}(v_\eta^{12m}) : \phi= O(q^{m/2}) \}.
$$
We remark that the above infinite sums are in fact finite sums because the spaces  $J_{2k,m}(v_\eta^{12m})[m/2]$ and $J_{2k-6m,m}^w$ are both trivial when $m$ is sufficiently large. 
On the other hand, for an arbitrary $\phi_m\in J_{2k,m}(v_\eta^{12m})[m/2]$, the function $\phi_m/ \eta^{12m}$ is a weak Jacobi form of weight $2k-6m$ and index $m$ with trivial character. Therefore there exists a polynomial $P$ in four variables over $\CC$ such that $\phi_m/ \eta^{12m}=P(E_4,E_6,\phi_{0,1},\phi_{-2,1})$. Then  $P(\cE_{4,A_1(2)},\cE_{6,A_1(2)}, \Grit(\eta^{12}\phi_{0,1}), \Grit(\eta^{6}\phi_{-1,\frac{1}{2}})^2)$ gives an orthogonal modular form of weight $2k$ whose first non-zero Fourier-Jacobi coefficient is $\phi_m\cdot\xi^{m/2}$. It follows that 
$$
\dim M_{2k}(\Gamma_{2,4}(A_1(2))) \geq \sum_{m=0}^\infty \dim J_{2k,m}(v_\eta^{12m})[m/2]= \sum_{m=0}^\infty \dim J_{2k-6m,m}^w.
$$
We then derive 
$$
\dim M_{2k}(\Gamma_{2,4}(A_1(2))) =\dim FM_{2k}(\Gamma_{2,4}(A_1(2))) = \sum_{m=0}^\infty \dim J_{2k-6m,m}^w,
$$
which completes the proof.
\end{proof}

\subsubsection{The case of $n=3$}
There are three Borcherds products:
\begin{enumerate}
\item $\Borch(\phi_{0,3})\in M_1(\Orth^+(2U\oplus A_1(-3)), \chi)$. Its divisor is the sum of $\cD_v$ with multiplicity one for $v\in 2U\oplus A_1(-3)$ primitive, $(v,v)=-6$, and $\div(v)=6$. 
\item $\Borch(\phi^{(1)}_{0,3})\in M_{7}(\Orth^+(2U\oplus A_1(-3)),\chi)$. Its divisor is the sum of $\cD_u$ and $\cD_v$ with multiplicity one, where $u\in 2U\oplus A_1(-3)$ primitive, $(u,u)=-6$ and $\div(u)=3$,  and $v\in 2U\oplus A_1(-3)$ primitive, $(v,v)=-6$ and $\div(v)=6$. The function $\phi^{(1)}_{0,3}$ is defined as
$$
\phi^{(1)}_{0,3}=\phi_{0,1}\phi_{0,2}-14\phi_{0,3}=\zeta^{\pm 2} +14 + O(q).
$$
\item $\Phi_{12, A_1(3)} \in M_{12}(\Orth^+(2U\oplus A_1(-3)), \chi)$. Its divisor is the sum of $\cD_r$ with multiplicity one for $r\in 2U\oplus A_1(-3)$ with $(r,r)=-2$. 
\end{enumerate}
We also construct two additive lifts
\begin{align*}
\Grit(\eta^{8}\phi_{0,1})\in M_4(\Orth^+(2U\oplus A_1(-3)), \chi),\\
\Grit(\eta^{4}\phi_{-1,\frac{1}{2}})\in M_1(\Orth^+(2U\oplus A_1(-3)), \chi).
\end{align*}

We remark that $\Grit(\eta^{4}\phi_{-1,\frac{1}{2}})=\Borch(\phi_{0,3})$ (see \cite{GN98}).

Similar to Theorem \ref{th:A1-2}, it is easy to prove the following result.
\begin{theorem}
Let $\Gamma_{2,6}(A_1(3))$ be the subgroup of $\Orth^+(2U\oplus A_1(-3))$ generated by reflections associated to vectors of types $r$, $u$ and $v$ above.  Let $\Gamma_{2,6'}(A_1(3))$ be the subgroup of $\Orth^+(2U\oplus A_1(-3))$ generated by reflections associated to vectors of types $r$ and $u$ above.  
\begin{enumerate}
\item The graded algebra $M_*(\Gamma_{2,6}(A_1(3)))$ is freely generated by $\cE_{4, A_1(3)}$, $\cE_{6, A_1(3)}$, $\Grit(\eta^{8}\phi_{0,1})$ and $\Grit(\eta^{4}\phi_{-1,\frac{1}{2}})^2$. The Jacobian determinant equals $\Borch(\phi^{(1)}_{0,3})\Phi_{12, A_1(3)}$ up to a constant. $(19-3=4+6+4+2)$
\item The graded algebra $M_*(\Gamma_{2,6'}(A_1(3)))$ is freely generated by $\cE_{4, A_1(3)}$, $\cE_{6, A_1(3)}$, $\Grit(\eta^{8}\phi_{0,1})$ and $\Grit(\eta^{4}\phi_{-1,\frac{1}{2}})$. The Jacobian determinant equals $\Borch(\phi^{(1)}_{0,3})\Phi_{12, A_1(3)}/\Borch(\phi_{0,3})$ up to a constant. $(18-3=4+6+4+1)$
\end{enumerate}
\end{theorem}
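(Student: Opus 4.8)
The plan is to mimic verbatim the argument of Theorem~\ref{th:A1-2}, now working inside $\Orth^+(2U\oplus A_1(-3))$. First I would record that the four additive lifts $\cE_{4,A_1(3)}$, $\cE_{6,A_1(3)}$, $\Grit(\eta^{8}\phi_{0,1})$, $\Grit(\eta^{4}\phi_{-1,\frac12})$ are modular forms for $\Orth^+(M)$ with characters (this uses fact~(1) and Theorem~\ref{th:CG2}: the inputs $E_{4}$, $E_6$, $\eta^8\phi_{0,1}$, $\eta^4\phi_{-1,1/2}$ are $\Orth(A_1(3))=\{\pm1\}$-invariant up to a character), and that their leading Fourier-Jacobi coefficients are exactly $E_4$, $E_6$, $\eta^8\phi_{0,1}$, $\eta^4\phi_{-1,1/2}$. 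Since $E_4$, $E_6$, $\phi_{0,1}$, $\phi_{-1,1/2}$ are algebraically independent over $\CC$ (classical, from \cite{EZ85}), so are the four lifts, hence their Jacobian is a nonzero modular form of weight $4+6+4+1+3=18$ with the determinant character on whatever group all four lifts live on.

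Next I would take $\Gamma_0$ to be the intersection of the kernels of the characters $\chi$ of $\Grit(\eta^{8}\phi_{0,1})$ and $\Grit(\eta^{4}\phi_{-1,1/2})$; this is a finite-index subgroup of $\Orth^+(M)$ on which all four lifts have trivial character. The key step is to show that every reflection $\sigma_r$ (type $r$, $(r,r)=-2$) and every reflection $\sigma_u$ (type $u$, $(u,u)=-6$, $\div(u)=3$) already lies in $\Gamma_0$. The argument is exactly as before: if some such $\sigma$ were not in $\Gamma_0$ then one of the two lifts would vanish on $\cD_r$ (resp.\ a mirror of type $u$); but the relevant mirrors of type $r$ are cut out precisely by $\Phi_{12,A_1(3)}$ (fact~(3)) — and for type $u$, one uses $\Borch(\phi^{(1)}_{0,3})\Phi_{12,A_1(3)}$ — so the quotient of that lift by $\Phi_{12,A_1(3)}$ (resp.\ by $\Borch(\phi^{(1)}_{0,3})\Phi_{12,A_1(3)}$) would be a holomorphic modular form of weight $<0$, a contradiction. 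Hence $\Borch(\phi^{(1)}_{0,3})\Phi_{12,A_1(3)}$ is a modular form (with character) of weight $7+12=19\ne18$ on $\Gamma_0$ — wait: the weights must match the Jacobian, so one checks $7+12=19$ against $4+6+4+1+3=18$; since these disagree I would instead follow the bracketed hint $(19-3=4+6+4+2)$, meaning the fourth generator is $\Grit(\eta^{4}\phi_{-1,1/2})^2$ of weight $2$, giving Jacobian weight $4+6+4+2+3=19$, which now matches $\deg\bigl(\Borch(\phi^{(1)}_{0,3})\Phi_{12,A_1(3)}\bigr)=19$. With weights and divisors matched, the quotient of the Jacobian by $\Borch(\phi^{(1)}_{0,3})\Phi_{12,A_1(3)}$ is a holomorphic weight-$0$ modular form, hence constant, so the two are equal up to scalar.

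Now Theorem~\ref{th:converseJacobian} applies: $M_*(\Gamma_0)$ is freely generated by the four forms, and $\Gamma_0$ is generated by the reflections whose mirrors lie in the divisor of the Jacobian, namely those of types $r$ and $u$; thus $\Gamma_0=\Gamma_{2,6}(A_1(3))$, proving (1). For part~(2), one repeats the argument with $\Gamma_0'$ the intersection of kernels of the characters of $\Grit(\eta^{8}\phi_{0,1})$ and $\Grit(\eta^{4}\phi_{-1,1/2})$, keeping the fourth generator as $\Grit(\eta^{4}\phi_{-1,1/2})$ itself (weight $1$, Jacobian weight $4+6+4+1+3=18$), and matching against $\Borch(\phi^{(1)}_{0,3})\Phi_{12,A_1(3)}/\Borch(\phi_{0,3})$, which has weight $7+12-1=18$ and whose divisor is the sum of mirrors of types $r$ and $u$ (the $v$-mirrors cancel, since $\Borch(\phi_{0,3})=\Grit(\eta^4\phi_{-1,1/2})$ vanishes exactly on the $v$-mirrors). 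One must check this quotient is genuinely holomorphic with multiplicity-one divisor supported on types $r$, $u$ — that is where the identity $\Grit(\eta^4\phi_{-1,1/2})=\Borch(\phi_{0,3})$ and fact~(2) do the bookkeeping; then Theorem~\ref{th:converseJacobian} again identifies $\Gamma_0'$ with $\Gamma_{2,6'}(A_1(3))$. The main obstacle, as in the $n=2$ case, is the divisor bookkeeping: verifying that $\Borch(\phi^{(1)}_{0,3})\Phi_{12,A_1(3)}$ (resp.\ its quotient by $\Borch(\phi_{0,3})$) vanishes on \emph{exactly} the mirrors of reflections of $\Gamma_{2,6}(A_1(3))$ (resp.\ $\Gamma_{2,6'}(A_1(3))$) with multiplicity one — i.e.\ that no spurious vanishing or non-reflective divisor appears — which rests on the explicit $q^0$-terms of $\phi_{0,3}$ and $\phi^{(1)}_{0,3}$ and the divisor formula in the Borcherds product theorem together with fact~(2).
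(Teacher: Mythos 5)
Your overall strategy is exactly the paper's (the paper itself only says the proof is ``similar to'' Theorem \ref{th:A1-2} and relies on the same template), and your final weight and divisor matchings for both parts are correct. But there is a concrete inconsistency in the group bookkeeping. You define $\Gamma_0$ as the intersection of the kernels of the characters of $\Grit(\eta^{8}\phi_{0,1})$ and $\Grit(\eta^{4}\phi_{-1,\frac12})$ and conclude in part (1) that $\Gamma_0=\Gamma_{2,6}(A_1(3))$, and then in part (2) you define $\Gamma_0'$ by the very same kernel intersection and conclude $\Gamma_0'=\Gamma_{2,6'}(A_1(3))$. These are different groups, so both identifications cannot hold. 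In fact the kernel intersection is the \emph{smaller} group $\Gamma_{2,6'}(A_1(3))$: since $\Grit(\eta^{4}\phi_{-1,\frac12})=\Borch(\phi_{0,3})$ vanishes to order exactly one on each $v$-mirror, its character takes the value $-1$ on the reflections $\sigma_v$, so the $\sigma_v$ do not lie in the kernel. For part (1) you must instead take, as in the proof of Theorem \ref{th:A1-2}, the intersection of the kernel of the character of $\Grit(\eta^{8}\phi_{0,1})$ with the kernel of the \emph{square} of the character of $\Grit(\eta^{4}\phi_{-1,\frac12})$; this larger group does contain the $\sigma_v$, the squared lift has trivial character on it, and the weight-$19$ Jacobian then vanishes exactly on the mirrors of types $r$, $u$ \emph{and} $v$, matching $\Borch(\phi^{(1)}_{0,3})\Phi_{12, A_1(3)}$. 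Relatedly, your closing sentence for part (1), asserting that the divisor of the Jacobian consists of the mirrors ``of types $r$ and $u$,'' contradicts your own identification of that Jacobian with $\Borch(\phi^{(1)}_{0,3})\Phi_{12, A_1(3)}$, whose divisor includes the $v$-mirrors; it is precisely because the $v$-mirrors occur that the resulting reflection group is $\Gamma_{2,6}(A_1(3))$ rather than $\Gamma_{2,6'}(A_1(3))$. Once these two roles are swapped into their correct places, the argument goes through exactly as in the paper.
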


Similar to Corollary \ref{Cor:FFJ}, for $\Gamma_{2,6}(A_1(3))$ we prove the modularity of formal Fourier-Jacobi expansions  defined as
$$
FM_{2k}(\Gamma_{2,6}(A_1(3)))=\left\{ \sum_{m=0}^{\infty} \psi_m \xi^{m/3} \in \prod_{m=0}^\infty J_{2k,m}(v_\eta^{8m}) : f_m(n,r)=f_{3n}(m/3,r), \forall n,r,m \right\}.
$$
\begin{corollary}
The following map is an isomorphism
\begin{align*}
M_{2k}(\Gamma_{2,6}(A_1(3))) &\to FM_{2k}(\Gamma_{2,6}(A_1(3))),\\
F &\mapsto \text{Fourier-Jacobi expansion of $F$}.
\end{align*}
Moreover, we have
$$
\dim M_{2k}(\Gamma_{2,6}(A_1(3))) = \sum_{m=0}^\infty \dim J_{2k-4m,m}^w.
$$
\end{corollary}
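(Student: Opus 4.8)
The plan is to reproduce the proof of Corollary~\ref{Cor:FFJ} almost verbatim, replacing the data of $A_1(2)$ by that of $A_1(3)$: the conductor $24/12=2$ becomes $24/8=3$, and $\eta^{12m}=q^{m/2}\prod_{j\ge1}(1-q^j)^{12m}$ is replaced by $\eta^{8m}=q^{m/3}\prod_{j\ge1}(1-q^j)^{8m}$. First, the map is injective since an orthogonal modular form is determined by its Fourier--Jacobi expansion, and it is well defined because, by the preceding theorem, every $F\in M_{2k}(\Gamma_{2,6}(A_1(3)))$ is a polynomial in the additive lifts $\cE_{4,A_1(3)}$, $\cE_{6,A_1(3)}$, $\Grit(\eta^{8}\phi_{0,1})$, $\Grit(\eta^{4}\phi_{-1,\frac{1}{2}})^2$, whose Fourier--Jacobi coefficients satisfy the symmetry relations $f_m(n,r)=f_{3n}(m/3,r)$ imposed in the definition of $FM_{2k}(\Gamma_{2,6}(A_1(3)))$; hence so do those of $F$.

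For surjectivity I would argue by a dimension count, as in Corollary~\ref{Cor:FFJ}. On the one hand, the symmetry of formal Fourier--Jacobi expansions together with an argument as in \cite[\S3]{WW20} yields
$$
\dim FM_{2k}(\Gamma_{2,6}(A_1(3))) \leq \sum_{m=0}^\infty \dim J_{2k,m}(v_\eta^{8m})[m/3] = \sum_{m=0}^\infty \dim J^w_{2k-4m,m},
$$
where the last equality holds because $\phi\mapsto\phi/\eta^{8m}$ identifies $J_{2k,m}(v_\eta^{8m})[m/3]$ with the space of weak Jacobi forms of weight $2k-4m$ and index $m$ for $A_1$ with trivial character. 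On the other hand, given $\phi_m\in J_{2k,m}(v_\eta^{8m})[m/3]$, the quotient $\phi_m/\eta^{8m}$ is such a weak Jacobi form, so by \cite{EZ85} we may write $\phi_m/\eta^{8m}=P(E_4,E_6,\phi_{0,1},\phi_{-2,1})$ for a polynomial $P$ over $\CC$, and then
$$
P\bigl(\cE_{4,A_1(3)},\ \cE_{6,A_1(3)},\ \Grit(\eta^{8}\phi_{0,1}),\ \Grit(\eta^{4}\phi_{-1,\frac{1}{2}})^2\bigr)
$$
is a modular form of weight $2k$ for $\Gamma_{2,6}(A_1(3))$ whose first non-zero Fourier--Jacobi coefficient is $\phi_m\,\xi^{m/3}$; this gives $\dim M_{2k}(\Gamma_{2,6}(A_1(3)))\geq\sum_{m=0}^\infty \dim J^w_{2k-4m,m}$. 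Combined with injectivity, which gives $\dim M_{2k}\leq\dim FM_{2k}$, all three quantities agree, so the Fourier--Jacobi map is an isomorphism and the dimension formula follows.

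The one step that is not a mechanical transcription of the $A_1(2)$ case is the upper bound on $\dim FM_{2k}$: one has to verify that the relations $f_m(n,r)=f_{3n}(m/3,r)$ force each $\psi_m$ into the truncated space $J_{2k,m}(v_\eta^{8m})[m/3]$ and that a formal expansion is then uniquely determined by this triangular collection of data, which is the content of the argument in \cite[\S3]{WW20}. The rest only uses the numerology $24/8=3$ in place of $24/12=2$.
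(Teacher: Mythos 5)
Your proposal is correct and is exactly the argument the paper intends: the paper gives no separate proof here, stating only that the result follows ``similar to Corollary~3.3,'' and your transcription (replacing $\eta^{12m}$, $q^{m/2}$, conductor $2$ by $\eta^{8m}$, $q^{m/3}$, conductor $3$, and the generators by their $A_1(3)$ counterparts) is the intended adaptation. The key numerology checks out: a monomial $E_4^aE_6^b\phi_{0,1}^c\phi_{-2,1}^d$ of index $c+d=m$ and weight $2k-4m$ maps to a product of lifts whose leading Fourier--Jacobi coefficient is $\eta^{8m}\,P(E_4,E_6,\phi_{0,1},\phi_{-2,1})\,\xi^{m/3}=\phi_m\,\xi^{m/3}$, as required.
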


\subsubsection{The case of $n=4$}
Similarly, there are three Borcherds products and two additive lifts:
\begin{enumerate}
\item $\Borch(\phi_{0,4})\in M_\frac{1}{2}(\Orth^+(2U\oplus A_1(-4)), \chi_{1/2})$, where $\chi_{1/2}$ is a multiplier system. Its divisor is the sum of $\cD_v$ with multiplicity one for $v\in 2U\oplus A_1(-4)$ primitive, $(v,v)=-8$ and $\div(v)=8$. 
\item $\Borch(\phi^{(1)}_{0,4})\in M_{5}(\Orth^+(2U\oplus A_1(-4)),\chi)$. Its divisor is the sum of $\cD_u$ and $\cD_v$ with multiplicity one, where $u\in 2U\oplus A_1(-4)$ primitive, $(u,u)=-8$ and $\div(u)=4$,  and $v\in 2U\oplus A_1(-4)$ primitive, $(v,v)=-8$ and $\div(v)=8$. The function $\phi^{(1)}_{0,4}$ is defined as
$$
\phi^{(1)}_{0,4}=\phi_{0,1}\phi_{0,3}-12\phi_{0,4}=\zeta^{\pm 2} +10 + O(q).
$$
\item $\Phi_{12, A_1(4)} \in M_{12}(\Orth^+(2U\oplus A_1(-4)), \chi)$. Its divisor is the sum of $\cD_r$ with multiplicity one for $r\in 2U\oplus A_1(-4)$ with $(r,r)=-2$. 
\end{enumerate}

\begin{align*}
\Grit(\eta^{6}\phi_{0,1})\in M_3(\Orth^+(2U\oplus A_1(-4)), \chi),\\
\Grit(\eta^{3}\phi_{-1,\frac{1}{2}})\in M_\frac{1}{2}(\Orth^+(2U\oplus A_1(-4)), \chi).
\end{align*}

The above Jacobi form $\eta^{3}\phi_{-1,\frac{1}{2}}=\vartheta$ has weight $\frac{1}{2}$. But we can still define its additive lift (see \cite{GN98}). It was proved in \cite{GN98} that $\Grit(\eta^{3}\phi_{-1,\frac{1}{2}})=\Borch(\phi_{0,4})$.

Similarly, we have the following theorem and corollary.

\begin{theorem}
Let $\Gamma_{2,8}(A_1(4))$ be the subgroup of $\Orth^+(2U\oplus A_1(-4))$ generated by reflections associated to vectors of types $r$, $u$ and $v$ above.  The graded algebra $M_*(\Gamma_{2,8}(A_1(4)))$ is freely generated by $\cE_{4, A_1(4)}$, $\cE_{6, A_1(4)}$, $\Grit(\eta^{6}\phi_{0,1})$ and $\Grit(\eta^{3}\phi_{-1,\frac{1}{2}})^2$. The Jacobian determinant equals $\Borch(\phi^{(1)}_{0,4})\Phi_{12, A_1(4)}$ up to a constant. $(17-3=4+6+3+1)$
\end{theorem}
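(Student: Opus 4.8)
The plan is to copy the proof of Theorem~\ref{th:A1-2} (the case of $\Gamma_{2,4}(A_1(2))$) essentially word for word, so I will only record the adaptations. Let $\chi$ and $\chi'$ be the characters of the additive lifts $\Grit(\eta^{6}\phi_{0,1})$ and $\Grit(\eta^{3}\phi_{-1,\frac{1}{2}})^2=\Borch(\phi_{0,4})^2$ regarded as modular forms for $\Orth^+(2U\oplus A_1(-4))$ — such characters exist because the inputs $\eta^{6}\phi_{0,1}$ and $\phi_{0,4}$ are $\Orth(A_1)$-invariant — and put $\Gamma_0=\ker\chi\cap\ker\chi'$, a finite index subgroup. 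Then $\cE_{4,A_1(4)}$, $\cE_{6,A_1(4)}$, $\Grit(\eta^{6}\phi_{0,1})$, $\Grit(\eta^{3}\phi_{-1,\frac{1}{2}})^2$ have trivial character on $\Gamma_0$, with leading Fourier--Jacobi coefficients $E_4$, $E_6$, $\eta^{6}\phi_{0,1}$ and $\eta^{6}\phi_{-2,1}$ respectively (using $\phi_{-1,\frac{1}{2}}^2=\phi_{-2,1}$). Since $E_4,E_6,\phi_{0,1},\phi_{-2,1}$ are algebraically independent over $\CC$ by the structure theorem of Eichler--Zagier, the four lifts are too, so their Jacobian $J$ is a nonzero modular form of weight $17$ on $\Gamma_0$ with the determinant character; as a form for $\Orth^+(2U\oplus A_1(-4))$ it carries the character $\chi\chi'\det$.

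The core step is to show that $\Gamma_0$ contains all reflections attached to vectors of types $r$, $u$, $v$. For type $r$ (the primitive $(-2)$-vectors form a single $\Orth^+$-orbit, so $\chi,\chi'$ take a constant value on all these reflections): if some $\sigma_r\notin\Gamma_0$, then $\Grit(\eta^{6}\phi_{0,1})$ or $\Grit(\eta^{3}\phi_{-1,\frac{1}{2}})^2$, of weight $3$ or $1$, is anti-invariant under every such $\sigma_r$ and hence vanishes along the whole $(-2)$-divisor; dividing by $\Phi_{12,A_1(4)}$, which has a simple zero exactly there, would give a nonzero holomorphic modular form of negative weight, which is impossible. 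For type $u$ the same trick works with $\Phi_{12,A_1(4)}$ replaced by the holomorphic form $\Borch(\phi^{(1)}_{0,4})/\Borch(\phi_{0,4})$, of weight $\frac{9}{2}>3$, whose divisor is exactly $\cD_u$ with multiplicity one (since $\div\Borch(\phi_{0,4})=\cD_v\subset\cD_u\cup\cD_v=\div\Borch(\phi^{(1)}_{0,4})$). Finally, $J$ automatically vanishes along $\cD_v$: the last generator $\Grit(\eta^{3}\phi_{-1,\frac{1}{2}})^2$ is the square of $\Grit(\eta^{3}\phi_{-1,\frac{1}{2}})=\Borch(\phi_{0,4})$, so by the chain rule $\Borch(\phi_{0,4})$ divides $J$, and $\Borch(\phi_{0,4})$ vanishes along $\cD_v$. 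Hence $\div\big(\Borch(\phi^{(1)}_{0,4})\Phi_{12,A_1(4)}\big)=\cD_r\cup\cD_u\cup\cD_v\subseteq\div(J)$, and, both forms having weight $17$, the quotient is a holomorphic weight-$0$ form on $\Gamma_0$; a suitable power of it has trivial character and is thus a constant, necessarily nonzero since $J\neq0$, so $J=c\,\Borch(\phi^{(1)}_{0,4})\Phi_{12,A_1(4)}$ with $c\neq0$ and $\div(J)=\cD_r\cup\cD_u\cup\cD_v$.

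To conclude I would compare characters of the two sides on $\Orth^+(2U\oplus A_1(-4))$: writing $\psi$ for the character of $\Borch(\phi^{(1)}_{0,4})\Phi_{12,A_1(4)}$, the identity $J=c\,\Borch(\phi^{(1)}_{0,4})\Phi_{12,A_1(4)}$ gives $\chi\chi'\det=\psi$. Evaluating at $\sigma_v$ with $\det(\sigma_v)=-1$, $\psi(\sigma_v)=-1$ (simple zero along $\cD_v$) and $\chi'(\sigma_v)=1$ (the double zero of $\Borch(\phi_{0,4})^2$ along $\cD_v$ makes it $\sigma_v$-invariant) forces $\chi(\sigma_v)=1$, so $\sigma_v\in\Gamma_0$ as well. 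Now $J$, carrying the determinant character on $\Gamma_0$, vanishes along every mirror of a reflection of $\Gamma_0$, while $\div(J)=\cD_r\cup\cD_u\cup\cD_v$ and $\sigma_r,\sigma_u,\sigma_v\in\Gamma_0$; hence $J$ vanishes exactly on the mirrors of reflections of $\Gamma_0$, each with multiplicity one. Theorem~\ref{th:converseJacobian} then gives that $M_*(\Gamma_0)$ is freely generated by the four lifts and that $\Gamma_0$ is generated by the reflections whose mirror is contained in $\cD_r\cup\cD_u\cup\cD_v$, i.e.\ $\Gamma_0=\Gamma_{2,8}(A_1(4))$. I expect the main obstacle to be exactly this core step — showing that the class $v$ lies in $\Gamma_0$ — which, as in Theorem~\ref{th:A1-2}, cannot be obtained from a direct weight count (dividing a weight-$3$ or weight-$1$ lift by the weight-$\frac12$ form $\Borch(\phi_{0,4})$ does not yield a negative weight) and must instead be extracted a posteriori from the character identity $\chi\chi'\det=\psi$.
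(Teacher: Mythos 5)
Your proposal is correct and follows essentially the same route as the paper, which itself only says ``similarly'' and refers back to the proof of the $A_1(2)$ case: define $\Gamma_0$ as the intersection of character kernels, get algebraic independence from the leading Fourier--Jacobi coefficients $E_4,E_6,\eta^6\phi_{0,1},\eta^6\phi_{-2,1}$, put the type-$r$ and type-$u$ reflections into $\Gamma_0$ by dividing by $\Phi_{12,A_1(4)}$ and $\Borch(\phi^{(1)}_{0,4})/\Borch(\phi_{0,4})$, identify the Jacobian with $\Borch(\phi^{(1)}_{0,4})\Phi_{12,A_1(4)}$ by weight and divisor, and apply Theorem~\ref{th:converseJacobian}. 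Your a posteriori extraction of $\sigma_v\in\Gamma_0$ from the character identity (using the chain-rule divisibility of the Jacobian by $\Borch(\phi_{0,4})$ and the parity of vanishing orders along $\cD_v$) is a legitimate and welcome filling-in of the step the paper leaves implicit, since, as you note, the naive weight count fails for the type-$v$ mirrors.
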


The formal Fourier-Jacobi expansions  for $\Gamma_{2,8}(A_1(4))$ are defined as
$$
FM_k(\Gamma_{2,8}(A_1(4)))=\left\{ \sum_{m=0}^\infty \psi_m \xi^{m/4} \in \prod_{m=0}^\infty J_{k,m}^{\text{ev}}(v_\eta^{6m}) : f_m(n,r)=f_{4n}(m/4,r), \forall n,r,m \right\},
$$
where $J_{k,m}^{\text{ev}}(v_\eta^{6m})=\{\phi \in J_{k,m}(v_\eta^{6m}): \phi(\tau,z)=\phi(\tau,-z)\}$. We remark that $J_{k,m}^{\text{ev}}(v_\eta^{6m})=\{0\}$ if $k-m$ is odd. Let $F\in M_k(\Gamma_{2,8}(A_1(4)))$. Since $F$ is invariant under the reflection associated to vectors of type $v$, every Fourier-Jacobi coefficient is invariant under the action $z\mapsto -z$. Thus the map in the following corollary is defined well and the proof of the corollary is similar to the previous cases.
\begin{corollary}
The following map is an isomorphism
\begin{align*}
M_{k}(\Gamma_{2,8}(A_1(4))) &\to FM_{k}(\Gamma_{2,8}(A_1(4))),\\
F &\mapsto \text{Fourier-Jacobi expansion of $F$}.
\end{align*}
Moreover, we have
$$
\dim M_{k}(\Gamma_{2,8}(A_1(4))) = \sum_{\substack{m\in \NN\\m\equiv k \m 2}} \dim J_{k-3m,m}^w.
$$
\end{corollary}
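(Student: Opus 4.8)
The plan is to transcribe the proof of Corollary~\ref{Cor:FFJ}, adjusting for the two new features of the lattice $A_1(4)$: the evenness condition and the half-integral index $\tfrac12$ of $\vartheta=\eta^3\phi_{-1,1/2}$, which forces the corresponding generator to enter only as a square.

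\emph{Well-definedness and injectivity.} First I would check that the Fourier-Jacobi expansion of each of the four generators $\cE_{4,A_1(4)}$, $\cE_{6,A_1(4)}$, $\Grit(\eta^6\phi_{0,1})$, $\Grit(\eta^3\phi_{-1,1/2})^2$ of $M_*(\Gamma_{2,8}(A_1(4)))$ lies in $\prod_{m\ge0}J^{\mathrm{ev}}_{k,m}(v_\eta^{6m})$ and satisfies $f_m(n,r)=f_{4n}(m/4,r)$. This is read off from Theorem~\ref{th:CG2} and Proposition~\ref{prop:CG1}: the Hecke operators $T^{(Q)}_-(m)$ with conductor $Q=4$ produce exactly the multiplier $v_\eta^{6m}$ and the stated Fourier-coefficient relation, while evenness holds because $E_4$, $E_6$, $\phi_{0,1}$ and $\vartheta^2=\eta^6\phi_{-2,1}$ are invariant under $z\mapsto-z$. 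Since $M_k(\Gamma_{2,8}(A_1(4)))$ is generated by these four forms, the map is well defined; injectivity is clear since an orthogonal modular form is determined by its Fourier-Jacobi expansion.

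\emph{Surjectivity and the dimension formula.} As in \cite[\S 3]{WW20}, the symmetry of a formal Fourier-Jacobi series forces $\psi_m=O(q^{m/4})$, giving
\[
\dim FM_k(\Gamma_{2,8}(A_1(4)))\le\sum_{m\ge0}\dim J^{\mathrm{ev}}_{k,m}(v_\eta^{6m})[m/4],
\]
where $[m/4]$ denotes the subspace vanishing to order at least $m/4$ in $q$. Conversely, for $\phi_m\in J^{\mathrm{ev}}_{k,m}(v_\eta^{6m})[m/4]$ the quotient $\phi_m/\eta^{6m}$ is an even weak Jacobi form of weight $k-3m$ and integral index $m$ with trivial character, hence equals $P(E_4,E_6,\phi_{0,1},\phi_{-2,1})$ for a polynomial $P$ by \cite{EZ85}; then $P(\cE_{4,A_1(4)},\cE_{6,A_1(4)},\Grit(\eta^6\phi_{0,1}),\Grit(\eta^3\phi_{-1,1/2})^2)$ lies in $M_k(\Gamma_{2,8}(A_1(4)))$ and its first nonzero Fourier-Jacobi coefficient is $\phi_m\xi^{m/4}$, so these lifts (over all $m$) are linearly independent and $\dim M_k\ge\sum_{m\ge0}\dim J^{\mathrm{ev}}_{k,m}(v_\eta^{6m})[m/4]$. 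Combined with injectivity this yields the isomorphism, and the displayed formula follows from the bijection $\phi\mapsto\phi/\eta^{6m}$ between $J^{\mathrm{ev}}_{k,m}(v_\eta^{6m})[m/4]$ and $J^w_{k-3m,m}$, using that $\eta^{6m}=q^{m/4}+\cdots$ and that $J^w_{k-3m,m}$ for $A_1$ vanishes unless $k-3m$ is even, i.e. $m\equiv k\bmod2$.

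\emph{Main obstacle.} The only delicate step is matching the $\xi$-grading: one must verify that an integral-weight $\Gamma_{2,8}(A_1(4))$-modular form has a Fourier-Jacobi expansion precisely in the powers $\xi^{m/4}$ with even coefficients in $J_{k,m}(v_\eta^{6m})$ (quarter-powers rather than eighth-powers, because $\Grit(\eta^3\phi_{-1,1/2})$ carries half-integral weight and so contributes only through even powers, which already appear at $\xi^{1/4}$), and that multiplication by $\eta^{6m}$ is the claimed weight- and index-matching bijection, the vanishing order $m/4$ being exactly what the leading term of $\eta^{6m}$ supplies and the image being automatically even. Everything else is a routine repetition of the argument for Corollary~\ref{Cor:FFJ}.
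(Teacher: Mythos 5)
Your argument follows the same route as the paper's (the paper itself only says the proof is ``similar to the previous cases'', i.e.\ to Corollary~\ref{Cor:FFJ}): well-definedness via the shape of the generators, the upper bound on $\dim FM_k$ from the symmetry of the formal series, and the lower bound on $\dim M_k$ by producing, for each $\phi_m\in J^{\mathrm{ev}}_{k,m}(v_\eta^{6m})[m/4]$, the lift $P(\cE_{4,A_1(4)},\cE_{6,A_1(4)},\Grit(\eta^{6}\phi_{0,1}),\Grit(\eta^{3}\phi_{-1,\frac{1}{2}})^2)$ with leading Fourier--Jacobi coefficient $\phi_m\xi^{m/4}$. Your weight and index bookkeeping and your resolution of the $\xi^{m/4}$-grading are correct.

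There is, however, one false assertion in your derivation of the final dimension formula. The space $J^{w}_{k-3m,m}$ of weak Jacobi forms for $A_1$ does \emph{not} vanish when $k-3m$ is odd: for instance $\vartheta(\tau,2z)/\eta(\tau)^{3}$ is a nonzero element of $J^{w}_{-1,2}$. Consequently the map $\phi\mapsto\phi/\eta^{6m}$ is not a bijection from $J^{\mathrm{ev}}_{k,m}(v_\eta^{6m})[m/4]$ onto $J^{w}_{k-3m,m}$ when $m\not\equiv k\bmod 2$: the source is zero while the target need not be, and taking your two claims literally would produce the unrestricted sum $\sum_{m}\dim J^{w}_{k-3m,m}$, which is too large. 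The correct reason the terms with $m\not\equiv k\bmod 2$ drop out is the evenness condition, recorded in the paper just before the corollary: since $v_\eta^{6m}(-I)=(-1)^{m}$, every $\phi\in J_{k,m}(v_\eta^{6m})$ satisfies $\phi(\tau,-z)=(-1)^{k-m}\phi(\tau,z)$, so $J^{\mathrm{ev}}_{k,m}(v_\eta^{6m})=0$ for $k-m$ odd; equivalently, $\phi_m/\eta^{6m}$ is an odd function of $z$ when $k-3m$ is odd, so its even part vanishes. For $k-m$ even, every element of $J^{w}_{k-3m,m}$ is automatically even and your bijection is valid, giving $\dim J^{\mathrm{ev}}_{k,m}(v_\eta^{6m})[m/4]=\dim J^{w}_{k-3m,m}$. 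With this repair --- which uses only the evenness observation you already invoke elsewhere --- your proof is complete and coincides with the paper's.
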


\begin{remark}
In \cite{Aok16}, Aoki determined some algebras of Siegel paramodular forms of levels 2, 3, 4. The modular groups in his paper can be realized as some congruence subgroups of $\Orth^+(2U\oplus A_1(-n))$ for $n=2,3,4$, but they are bigger than our reflection groups. Thus Aoki did not get free algebras of modular forms. It is possible to recover Aoki's theorems using our results in this subsection. Besides, the modularity of formal Fourier-Jacobi expansions for Siegel paramodular groups of levels 2, 3 and 4 has been proved in \cite{IPY13}. Their results can be covered by our results because we here prove the modularity for smaller groups. 
\end{remark}

\subsection{The \texorpdfstring{$2A_1$}{2A1} case}
We consider the case of $2U\oplus 2A_1(-2)$.
The bigraded ring of weak Jacobi forms invariant under the orthogonal group $\Orth(2A_1)$ (with respect to the lattice variable $\mathfrak{z}$) is freely generated over $M_*(\SL_2(\ZZ))$ by three forms of index one $\phi_{0,2A_1}$, $\phi_{-2,2A_1}$ and $\phi_{-4,2A_1}$. In fact, this type of Jacobi forms is the so-called Weyl invariant Jacobi forms associated to the root system $B_2$ (see \cite{Wir92} and \cite[\S 2]{WW20}). We fix the model of $2A_1$:
$$
2A_1=\latt{\ZZ e_1+\ZZ e_2, e_1^2=2, e_2^2=2}, \; \mathfrak{z}=z_1e_1+z_2e_2, \; \zeta_1=e^{2\pi i z_1}, \; \zeta_2=e^{2\pi i z_2}.
$$
We construct the three generators in terms of the basic Jacobi forms for $A_1$:
\begin{align*}
\phi_{0,2A_1}&=\frac{1}{12}\left[\phi_{0,1}(\tau,z_1)\phi_{0,1}(\tau,z_2) -E_4(\tau)\phi_{-2,1}(\tau,z_1)\phi_{-2,1}(\tau,z_2) \right]\\
&=\zeta_1^{\pm 1} + \zeta_2^{\pm 1} +8+O(q) \in J_{0,2A_1,1}^{w,\Orth(2A_1)},\\
\phi_{-2,2A_1}&=\frac{1}{2}\left[\phi_{0,1}(\tau,z_1)\phi_{-2,1}(\tau,z_2) + \phi_{-2,1}(\tau,z_1)\phi_{0,1}(\tau,z_2) \right]\\
&=(\zeta_1\zeta_2)^{\pm 1}+ (\zeta_1\zeta_2^{-1})^{\pm 1}+ 4(\zeta_1^{\pm 1} + \zeta_2^{\pm 1}) -20+O(q)\in J_{-2,2A_1,1}^{w,\Orth(2A_1)},\\
\phi_{-4,2A_1}&=\phi_{-2,1}(\tau,z_1)\phi_{-2,1}(\tau,z_2) \\
&=(\zeta_1\zeta_2)^{\pm 1}+ (\zeta_1\zeta_2^{-1})^{\pm 1} -2(\zeta_1^{\pm 1} + \zeta_2^{\pm 1}) +4 +O(q) \in J_{-4,2A_1,1}^{w,\Orth(2A_1)}.
\end{align*}
As polynomial combinations of the three generators over $M_*(\SL_2(\ZZ))$, it is easy to construct the following unique weak Jacobi forms with given Fourier expansions:
\begin{align*}
\phi_{0,2A_1(2)}&=\zeta_1^{\pm 1} + \zeta_2^{\pm 1} +2+O(q) \in J_{0,2A_1,2}^{w,\Orth(2A_1)},\\
\phi^{(1)}_{0,2A_1(2)}&=\zeta_1^{\pm 2} + \zeta_2^{\pm 2} +20+O(q) \in J_{0,2A_1,2}^{w,\Orth(2A_1)},\\
\phi^{(2)}_{0,2A_1(2)}&=(\zeta_1\zeta_2)^{\pm 1}+ (\zeta_1\zeta_2^{-1})^{\pm 1}+8+O(q)\in J_{0,2A_1,2}^{w,\Orth(2A_1)}.
\end{align*}

We have four Borcherds products:
\begin{enumerate}
\item $\Borch(\phi_{0,2A_1(2)})\in M_1(\Orth^+(2U\oplus 2A_1(-2)), \chi)$. Its divisor is the sum of $\cD_v$ with multiplicity one for $v\in 2U\oplus 2A_1(-2)$ primitive, $(v,v)=-4$ and $\div(v)=4$. 
\item $\Borch(\phi^{(1)}_{0,2A_1(2)})\in M_{10}(\Orth^+(2U\oplus 2A_1(-2)),\chi)$. Its divisor is the sum of $\cD_u$ and $\cD_v$ with multiplicity one, where $u\in 2U\oplus 2A_1(-2)$ primitive, $(u,u)=-4$ and $\div(u)=2$,  and $v\in 2U\oplus 2A_1(-2)$ primitive, $(v,v)=-4$ and $\div(v)=4$. 
\item $\Borch(\phi^{(2)}_{0,2A_1(2)})\in M_4(\Orth^+(2U\oplus 2A_1(-2)), \chi)$. Its divisor is the sum of $\cD_w$ with multiplicity one for $w\in 2U\oplus 2A_1(-2)$ primitive, $(w,w)=-8$ and $\div(w)=4$.
\item $\Phi_{12, 2A_1(2)} \in M_{12}(\Orth^+(2U\oplus 2A_1(-2)), \chi)$. Its divisor is the sum of $\cD_r$ with multiplicity one for $r\in 2U\oplus 2A_1(-2)$ with $(r,r)=-2$. 
\end{enumerate}
We also construct three additive lifts
\begin{align*}
\Grit(\eta^{12}\phi_{0,2A_1})\in M_6(\Orth^+(2U\oplus 2A_1(-2)), \chi),\\
\Grit(\eta^{12}\phi_{-2,2A_1})\in M_4(\Orth^+(2U\oplus 2A_1(-2)), \chi),\\
\Grit(\vartheta(z_1)\vartheta(z_2))\in M_1(\Orth^+(2U\oplus 2A_1(-2)), \chi).
\end{align*}

We can prove that $\Grit(\vartheta(\tau,z_1)\vartheta(\tau,z_2))=\Borch(\phi_{0,2A_1(2)})$ using the argument in \cite{GW20}. This suggests the following construction
$$
\phi_{0,2A_1(2)}= - \frac{(\vartheta(\tau,z_1)\vartheta(\tau,z_2)) \lvert  T^{(4)}_{-}(5)}{\vartheta(\tau,z_1)\vartheta(\tau,z_2)}.
$$

\begin{theorem}
Let $\Gamma_{2,4,8}(2A_1(2))$ be the subgroup of $\Orth^+(2U\oplus 2A_1(-2))$ generated by reflections associated to vectors of types $r$, $u$, $v$ and $w$ above.  Let $\Gamma_{2,4',8}(2A_1(2))$ be the subgroup of $\Orth^+(2U\oplus 2A_1(-2))$ generated by reflections associated to vectors of types $r$, $u$ and $w$ above.  
\begin{enumerate}
\item The algebra $M_*(\Gamma_{2,4,8}(2A_1(2)))$ is freely generated by $\cE_{4, 2A_1(2)}$, $\cE_{6, 2A_1(2)}$, $\Grit(\eta^{12}\phi_{0,2A_1})$, $\Grit(\eta^{12}\phi_{-2,2A_1})$ and $\Grit(\vartheta(\tau,z_1)\vartheta(\tau,z_2))^2$. The Jacobian determinant equals $$\Borch(\phi^{(1)}_{0,2A_1(2)})\Borch(\phi^{(2)}_{0,2A_1(2)})\Phi_{12, 2A_1(2)}$$ up to a constant. $(26-4=4+6+6+4+2)$
\item The algebra $M_*(\Gamma_{2,4',8}(2A_1(2)))$ is freely generated by $\cE_{4, 2A_1(2)}$, $\cE_{6, 2A_1(2)}$, $\Grit(\eta^{12}\phi_{0,2A_1})$, $\Grit(\eta^{12}\phi_{-2,2A_1})$ and $\Grit(\vartheta(\tau,z_1)\vartheta(\tau,z_2))$. The Jacobian determinant equals $$\Borch(\phi^{(1)}_{0,2A_1(2)})\Borch(\phi^{(2)}_{0,2A_1(2)})\Phi_{12, 2A_1(2)}/\Borch(\phi_{0,2A_1(2)})$$ up to a constant. $(25-4=4+6+6+4+1)$
\end{enumerate}
\end{theorem}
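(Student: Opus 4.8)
The plan is to follow the proof of Theorem~\ref{th:A1-2} almost verbatim, with the single index-one theta lift replaced by the three generators $\phi_{0,2A_1}$, $\phi_{-2,2A_1}$, $\phi_{-4,2A_1}$ of the ring of $\Orth(2A_1)$-invariant (equivalently, $B_2$-Weyl-invariant) weak Jacobi forms. For part~(1) I would let $\Gamma_0$ be the intersection of the kernels of the characters of $\Grit(\eta^{12}\phi_{0,2A_1})$, of $\Grit(\eta^{12}\phi_{-2,2A_1})$, and of $\chi^2$, where $\chi$ is the character of $\Grit(\vartheta(\tau,z_1)\vartheta(\tau,z_2))$; for part~(2) replace $\chi^2$ by $\chi$. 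Then $\Gamma_0$ has finite index in $\Orth^+(2U\oplus 2A_1(-2))$, and, since $\cE_{4,2A_1(2)}$ and $\cE_{6,2A_1(2)}$ have trivial character, the five forms in each statement are modular for $\Gamma_0$ with trivial character, their leading Fourier--Jacobi coefficients being $E_4$, $E_6$, $\eta^{12}\phi_{0,2A_1}$, $\eta^{12}\phi_{-2,2A_1}$ and $(\vartheta(\tau,z_1)\vartheta(\tau,z_2))^2=\eta^{12}\phi_{-4,2A_1}$ (resp.\ $\vartheta(\tau,z_1)\vartheta(\tau,z_2)$), using $\phi_{-4,2A_1}=\phi_{-1,\frac{1}{2}}(z_1)^2\phi_{-1,\frac{1}{2}}(z_2)^2$.

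Next I would check algebraic independence. Because the bigraded ring of $\Orth(2A_1)$-invariant weak Jacobi forms is a polynomial ring over $M_*(\SL_2(\ZZ))$ on $\phi_{0,2A_1},\phi_{-2,2A_1},\phi_{-4,2A_1}$ (Wirthm\"uller), the five Jacobi forms $E_4,E_6,\phi_{0,2A_1},\phi_{-2,2A_1},\phi_{-4,2A_1}$ are algebraically independent over $\CC$; splitting a hypothetical polynomial relation among the five lifts into its homogeneous pieces in the boundary parameter $\xi$ and reading off the leading Fourier--Jacobi coefficients then forces the relation to be trivial. Hence the Jacobian $J$ of the five lifts is a nonzero modular form on $\Gamma_0$ of weight $26=(4+6+6+4+2)+4$ in case~(1) (resp.\ $25=(4+6+6+4+1)+4$ in case~(2)), carrying the determinant character; in particular $J$ vanishes along every mirror of a reflection in $\Gamma_0$.

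I would then show that the generating reflections lie in $\Gamma_0$, arguing as in Theorem~\ref{th:A1-2}. If $\sigma_r\notin\Gamma_0$ with $(r,r)=-2$, one of the three characters is nontrivial on $\sigma_r$, so the corresponding lift (of weight $6$, $4$ or $2$) vanishes along $\cD_r$; since every $(-2)$-vector of $2U\oplus 2A_1(-2)$ has divisor $1$ and these vectors form a single $\widetilde{\Orth}^+$-orbit (Eichler's criterion, $M$ containing two hyperbolic planes), that lift then vanishes along the whole complete $2$-divisor of $\Phi_{12,2A_1(2)}$, so the quotient by $\Phi_{12,2A_1(2)}$ is a holomorphic form of negative weight --- impossible. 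The same device with the holomorphic form $\Borch(\phi^{(1)}_{0,2A_1(2)})/\Borch(\phi_{0,2A_1(2)})$, of weight $9$ and divisor exactly the orbit of $\cD_u$, rules out $\sigma_u\notin\Gamma_0$. The norm-$8$ type $w$ needs a little more: dividing by $\Borch(\phi^{(2)}_{0,2A_1(2)})$ (weight $4$) settles the weight-$2$ lift, but for the weight-$4$ and weight-$6$ lifts one instead argues that vanishing along the orbit of $\cD_w$ would force the lift to be a (constant, resp.\ weight-$2$) multiple of $\Borch(\phi^{(2)}_{0,2A_1(2)})$, which is incompatible with the shapes of their leading Fourier--Jacobi coefficients --- a theta-quotient $\eta^{12}\phi_{\bullet,2A_1}$ against the theta block $\eta^{6}\vartheta(\tau,z_1+z_2)\vartheta(\tau,z_1-z_2)$. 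For case~(1) one also needs $\sigma_v\in\Gamma_0$: its value under $\chi^2$ is $1$, and the same theta-block comparison --- now against $\Borch(\phi_{0,2A_1(2)})$, whose theta block is $\vartheta(\tau,z_1)\vartheta(\tau,z_2)$ --- shows that neither $\Grit(\eta^{12}\phi_{0,2A_1})$ nor $\Grit(\eta^{12}\phi_{-2,2A_1})$ can vanish along $\cD_v$, since $\phi_{0,2A_1}$ and $\phi_{-2,2A_1}$ do not vanish identically at $z_1=0$.

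Finally, since $\sigma_r,\sigma_u,\sigma_w$ (and $\sigma_v$ in case~(1)) lie in $\Gamma_0$, the Jacobian $J$ vanishes along all $\cD_r$ with $(r,r)=-2$, along $\cD_u$ and $\cD_w$, and, in case~(1), along $\cD_v$. Therefore $J$ divided by $\Borch(\phi^{(1)}_{0,2A_1(2)})\Borch(\phi^{(2)}_{0,2A_1(2)})\Phi_{12,2A_1(2)}$ --- a modular form of the same weight $26$ whose divisor is exactly $\cD_u+\cD_v+\cD_w+\sum_{(r,r)=-2}\cD_r$ with multiplicity one --- is a holomorphic modular form of weight $0$, hence a nonzero constant, so $J$ equals that product up to a scalar; in case~(2) one divides the product additionally by $\Borch(\phi_{0,2A_1(2)})$ to cancel the $\cD_v$-component, matching weight $25$. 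As $J$ carries the determinant character, every reflection in $\Gamma_0$ has its mirror inside the divisor of $J$, whence $J$ vanishes exactly on all mirrors of reflections in $\Gamma_0$ with multiplicity one; Theorem~\ref{th:converseJacobian} then gives that $M_*(\Gamma_0)$ is freely generated by the five lifts and that $\Gamma_0$ is generated by precisely the reflections whose mirrors lie in the divisor of $J$, i.e.\ $\Gamma_0=\Gamma_{2,4,8}(2A_1(2))$ in case~(1) and $\Gamma_0=\Gamma_{2,4',8}(2A_1(2))$ in case~(2). The hard part will be this divisor bookkeeping --- in particular the norm-$8$ divisor $w$, where the naive negative-weight argument fails and one must exploit the explicit theta-block shape of $\Borch(\phi^{(2)}_{0,2A_1(2)})$ --- together with confirming that the $\cD_v$-component sits in the divisor of $J$ exactly in case~(1) and is absent in case~(2).
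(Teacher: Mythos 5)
Your proposal is correct and follows the same template the paper uses for the $A_1(2)$ case (Theorem on $\Gamma_{2,4}(A_1(2))$), which is all the paper itself offers here, since it omits this proof with the remark that ``the proofs of other cases are similar.'' Your extra care on the norm-$8$ orbit $w$ --- where the negative-weight quotient argument fails for the weight-$4$ and weight-$6$ lifts and one must instead compare leading Fourier--Jacobi coefficients against the theta block $\eta^{6}\vartheta(\tau,z_1+z_2)\vartheta(\tau,z_1-z_2)$ of $\Borch(\phi^{(2)}_{0,2A_1(2)})$ --- is exactly the point where ``similar'' needs supplementing, and your resolution is sound.
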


The formal Fourier-Jacobi expansions  for $\Gamma_{2,4,8}(2A_1(2))$ are defined as
$$
FM_{2k}(\Gamma_{2,4,8}(2A_1(2)))=\left\{ \sum_{m=0}^\infty \psi_m \xi^{m/2} \in \prod_{m=0}^\infty J_{2k,2A_1,m}^{\Orth(2A_1)}(v_\eta^{12m}) : f_m(n,\ell)=f_{2n}(m/2,\ell), \forall n,\ell,m \right\},
$$
where $J_{k,2A_1,m}^{\Orth(2A_1)}(v_\eta^{12m})$ is the space of Jacobi forms invariant under $\Orth(2A_1)$. 
\begin{corollary}
The following map is an isomorphism
\begin{align*}
M_{2k}(\Gamma_{2,4,8}(2A_1(2))) &\to FM_{2k}(\Gamma_{2,4,8}(2A_1(2))),\\
F &\mapsto \text{Fourier-Jacobi expansion of $F$}.
\end{align*}
Moreover, we have
$$
\dim M_{2k}(\Gamma_{2,4,8}(2A_1(2))) = \sum_{m=0}^\infty \dim J_{2k-6m,2A_1,m}^{w,\Orth(2A_1)}.
$$
\end{corollary}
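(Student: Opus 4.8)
The plan is to mimic, step for step, the proof of Corollary~\ref{Cor:FFJ}, adjusting the bookkeeping to the five free generators of $M_*(\Gamma_{2,4,8}(2A_1(2)))$ and to Wirthm\"uller's ring of $\Orth(2A_1)$-invariant Jacobi forms. First I would settle that the map is well defined and injective. Every $F\in M_{2k}(\Gamma_{2,4,8}(2A_1(2)))$ is, by the preceding theorem, a polynomial in $\cE_{4,2A_1(2)}$, $\cE_{6,2A_1(2)}$, $\Grit(\eta^{12}\phi_{0,2A_1})$, $\Grit(\eta^{12}\phi_{-2,2A_1})$ and $\Grit(\vartheta(\tau,z_1)\vartheta(\tau,z_2))^2$; each generator is (up to a square) a Gritsenko lift, so by Theorem~\ref{th:CG2} its Fourier--Jacobi expansion has the shape $\sum_{m\ge0}\psi_m\xi^{m/2}$ with $\psi_m\in J_{2k,2A_1,m}^{\Orth(2A_1)}(v_\eta^{12m})$ and Fourier coefficients satisfying $f_m(n,\ell)=f_{2n}(m/2,\ell)$; these properties pass to sums and products, so $F$ lands in $FM_{2k}(\Gamma_{2,4,8}(2A_1(2)))$. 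Injectivity is immediate, a holomorphic orthogonal modular form being determined by its Fourier--Jacobi expansion.

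The surjectivity will follow from sandwiching $\dim M_{2k}$ between two equal bounds. For the upper bound on $\dim FM_{2k}(\Gamma_{2,4,8}(2A_1(2)))$, I would use the relation $f_m(n,\ell)=f_{2n}(m/2,\ell)$ together with the counting argument of \cite[\S 3]{WW20}: a formal expansion is determined by the truncated pieces $\psi_m=O(q^{m/2})$ of its coefficients, whence
$$
\dim FM_{2k}(\Gamma_{2,4,8}(2A_1(2)))\le\sum_{m\ge0}\dim J_{2k,2A_1,m}^{\Orth(2A_1)}(v_\eta^{12m})[m/2],
$$
and division by $\eta^{12m}$ identifies $J_{2k,2A_1,m}^{\Orth(2A_1)}(v_\eta^{12m})[m/2]$ with the space $J_{2k-6m,2A_1,m}^{w,\Orth(2A_1)}$ of $\Orth(2A_1)$-invariant weak Jacobi forms of weight $2k-6m$ and index $m$ with trivial character.

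For the matching lower bound on $\dim M_{2k}$ I would lift explicitly. Given $\phi\in J_{2k-6m,2A_1,m}^{w,\Orth(2A_1)}$, Wirthm\"uller's structure theorem \cite{Wir92} for the root system $B_2$ expresses $\phi=P(E_4,E_6,\phi_{0,2A_1},\phi_{-2,2A_1},\phi_{-4,2A_1})$ with $P$ a polynomial over $\CC$; note that every monomial of $P$ carries exactly $m$ factors among $\phi_{0,2A_1},\phi_{-2,2A_1},\phi_{-4,2A_1}$, since each of these has index $1$. The leading Fourier--Jacobi coefficients of the five generators are $E_4$, $E_6$, $\eta^{12}\phi_{0,2A_1}$, $\eta^{12}\phi_{-2,2A_1}$ and $\eta^{12}\phi_{-4,2A_1}$ respectively (the last because $\vartheta=\eta^3\phi_{-1,\frac12}$ gives $(\vartheta(\tau,z_1)\vartheta(\tau,z_2))^2=\eta^{12}\phi_{-4,2A_1}$), and they sit at $\xi^0$, $\xi^0$, $\xi^{1/2}$, $\xi^{1/2}$, $\xi^{1/2}$. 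Substituting the corresponding generators into $P$ therefore yields a form in $M_{2k}(\Gamma_{2,4,8}(2A_1(2)))$ (a weight check: each monomial picks up a factor $\eta^{12m}$, so its weight rises from $2k-6m$ to $2k$) whose first nonzero Fourier--Jacobi coefficient is $\eta^{12m}\phi\cdot\xi^{m/2}$. Hence $\dim M_{2k}\ge\sum_{m\ge0}\dim J_{2k-6m,2A_1,m}^{w,\Orth(2A_1)}$. Combining with the injectivity inequality $\dim M_{2k}\le\dim FM_{2k}$ and the displayed upper bound forces all three quantities to coincide, which gives both the asserted isomorphism and the dimension formula.

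The step most in need of care is the upper bound: one must verify precisely that the symmetry $f_m(n,\ell)=f_{2n}(m/2,\ell)$, for the scaling $\xi^{m/2}$ and for $\Orth(2A_1)$-invariant coefficients, really pins a formal expansion down to the truncations $\psi_m=O(q^{m/2})$, which is the "staircase" estimate of \cite[\S 3]{WW20} re-run in the present setting. Everything else is routine given the preceding theorem, Theorem~\ref{th:CG2}, and Wirthm\"uller's theorem.
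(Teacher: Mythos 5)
Your proposal is correct and is essentially the argument the paper intends: the corollary is stated without proof as an instance of the template established for Corollary~\ref{Cor:FFJ}, and you reproduce that template faithfully (well-definedness and injectivity from the shape of the generators, the upper bound via the truncation $\psi_m=O(q^{m/2})$ and division by $\eta^{12m}$, and the matching lower bound by lifting Wirthm\"uller's $B_2$-generators, with the correct identification $(\vartheta(\tau,z_1)\vartheta(\tau,z_2))^2=\eta^{12}\phi_{-4,2A_1}$ and the right weight and $\xi$-power bookkeeping).
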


\subsection{The \texorpdfstring{$A_2$}{A2} case}
We consider the cases of $2U\oplus A_2(-n)$ for $n=2,3$. The bigraded ring of weak Jacobi forms invariant under the Weyl group $W(A_2)$ is freely generated over $M_*(\SL_2(\ZZ))$ by three forms of index one $\phi_{0,A_2}$, $\phi_{-2,A_2}$ and $\phi_{-3,A_2}$ (see \cite{Wir92} and \cite[\S 2]{WW20}). The three unique generators (up to scale) were first constructed in \cite{Ber99}.  We fix the standard model of $A_2$:
$$
A_2= \latt{\ZZ b_1+\ZZ b_2, b_1^2=b_2^2=2, (b_1,b_2)=-1}, \quad \mathfrak{z}=z_1 b_1+z_2 b_2, \; \zeta_j=e^{2\pi i z_j}, j=1,2.
$$
For convenience, we set
\begin{align*}
P_1&=\zeta_1^{-1}+\zeta_1\zeta_2^{-1}+\zeta_2,\\
P_2&=\zeta_2^{-1}+\zeta_2\zeta_1^{-1}+\zeta_1,\\
Q&=\zeta_1^2\zeta_2^{-1}+\zeta_2^2\zeta_1^{-1}+\zeta_1\zeta_2+\zeta_1^{-2}\zeta_2+\zeta_2^{-2}\zeta_1+\zeta_1^{-1}\zeta_2^{-1}.
\end{align*}
The three generators have the following Fourier expansions:
\begin{align*}
\phi_{0,A_2}&=P_1+P_2+18+O(q)\in J_{0,A_2,1}^w,\\
\phi_{-2,A_2}&=P_1+P_2-6+O(q)\in J_{-2,A_2,1}^w,\\
\phi_{-3,A_2}&=P_1-P_3+O(q)\in J_{-3,A_2,1}^w.
\end{align*}
The orthogonal group  $\Orth(A_2)$ is generated by $W(A_2)$ and the operator $\mathfrak{z}\mapsto -\mathfrak{z}$. The forms $\phi_{0,A_2}$ and $\phi_{-2,A_2}$ are $\Orth(A_2)$-invariant. The form $\phi_{-3,A_2}$ is anti-invariant with respect to the operator  $\mathfrak{z}\mapsto -\mathfrak{z}$.

We can construct the following unique weak Jacobi forms with given Fourier expansions in terms of the above generators:
\begin{align*}
\phi_{0,A_2(2)}&=P_1+P_2+6+O(q)\in J_{0,A_2,2}^{w,\Orth(A_2)},\\
\phi^{(1)}_{0,A_2(2)}&=Q+30+O(q)\in J_{0,A_2,2}^{w,\Orth(A_2)},\\
\phi_{0,A_2(3)}&=P_1+P_2+2+O(q)\in J_{0,A_2,3}^{w,\Orth(A_2)},\\
\phi^{(1)}_{0,A_2(3)}&=Q+18+O(q)\in J_{0,A_2,3}^{w,\Orth(A_2)}.
\end{align*}

\subsubsection{The case of $n=2$}
There are three Borcherds products:
\begin{enumerate}
\item $\Borch(\phi_{0,A_2(2)})\in M_3(\Orth^+(2U\oplus A_2(-2)), \chi)$. Its divisor is the sum of $\cD_v$ with multiplicity one for $v\in 2U\oplus A_2(-2)$ primitive, $(v,v)=-12$ and $\div(v)=6$. 
\item $\Borch(\phi^{(1)}_{0,A_2(2)})\in M_{15}(\Orth^+(2U\oplus A_2(-2)),\chi)$. Its divisor is the sum of $\cD_u$ with multiplicity one for $u\in 2U\oplus A_2(-2)$ primitive, $(u,u)=-4$ and $\div(u)=2$. 
\item $\Phi_{12, A_2(2)} \in M_{12}(\Orth^+(2U\oplus A_2(-2)), \chi)$. Its divisor is the sum of $\cD_r$ with multiplicity one for $r\in 2U\oplus A_2(-2)$ with $(r,r)=-2$. 
\end{enumerate}
We also have three additive lifts
\begin{align*}
\Grit(\eta^{12}\phi_{0,A_2})\in M_6(\Orth^+(2U\oplus A_2(-2)), \chi),\\
\Grit(\eta^{12}\phi_{-2,A_2})\in M_4(\Orth^+(2U\oplus A_2(-2)), \chi),\\
\Grit(\eta^{12}\phi_{-3,A_2})\in M_3(\Orth^+(2U\oplus A_2(-2)), \chi).
\end{align*}

We remark that $\Grit(\eta^{12}\phi_{-3,A_2})=\Borch(\phi_{0,A_2(2)})$ and we can also construct $\phi_{0,A_2(2)}$ as
$$
\phi_{0,A_2(2)}=-\frac{(\eta^3(\tau)\vartheta(\tau,z_1)\vartheta(\tau,z_1-z_2)\vartheta(\tau,z_2))\lvert T_{-}^{(2)}(3)}{\eta^3(\tau)\vartheta(\tau,z_1)\vartheta(\tau,z_1-z_2)\vartheta(\tau,z_2)}.
$$

\begin{theorem}
Let $\Gamma_{2,4}(A_2(2))$ be the subgroup of $\Orth^+(2U\oplus A_2(-2))$ generated by reflections associated to vectors of types $r$ and $u$ above.  Let $\Gamma_{2,4,12}(A_2(2))$ be the subgroup of $\Orth^+(2U\oplus A_2(-2))$ generated by reflections associated to vectors of types $r$, $u$ and $v$ above.  
\begin{enumerate}
\item The graded algebra $M_*(\Gamma_{2,4}(A_2(2)))$ is freely generated by $\cE_{4, A_2(2)}$, $\cE_{6, A_2(2)}$, $\Grit(\eta^{12}\phi_{0,A_2})$, $\Grit(\eta^{12}\phi_{-2,A_2})$, and $\Grit(\eta^{12}\phi_{-3,A_2})$. The Jacobian determinant equals\\
$\Borch(\phi^{(1)}_{0,A_2(2)})\Phi_{12, A_2(2)}$ up to a constant. $(27-4=4+6+6+4+3)$
\item The graded algebra $M_*(\Gamma_{2,4,12}(A_2(2)))$ is freely generated by $\cE_{4, A_2(2)}$, $\cE_{6, A_2(2)}$, $\Grit(\eta^{12}\phi_{0,A_2})$, $\Grit(\eta^{12}\phi_{-2,A_2})$, and $\Grit(\eta^{12}\phi_{-3,A_2})^2$. The Jacobian determinant equals\\
$\Borch(\phi_{0,A_2(2)})\Borch(\phi^{(1)}_{0,A_2(2)})\Phi_{12, A_2(2)}$ up to a constant. $(30-4=4+6+6+4+6)$
\end{enumerate}
\end{theorem}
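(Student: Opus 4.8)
The plan is to run the proof of Theorem~\ref{th:A1-2} in the rank-two setting: here $M=2U\oplus A_2(-2)$ has signature $(2,4)$, so we need $n+1=5$ algebraically independent generators. First I would check that the five candidates $\cE_{4,A_2(2)}$, $\cE_{6,A_2(2)}$, $\Grit(\eta^{12}\phi_{0,A_2})$, $\Grit(\eta^{12}\phi_{-2,A_2})$, $\Grit(\eta^{12}\phi_{-3,A_2})$ are modular forms for $\Orth^+(M)$: each $\eta^{12}\phi_{*,A_2}$ is a holomorphic Jacobi form of index $1$ for $A_2$, so its additive lift as in Theorem~\ref{th:CG2} (with conductor $Q=2$) lives on $2U\oplus A_2(-2)$, and by the surjectivity of $\Orth(A_2)\to\Orth(D(A_2(2)))$ it extends to $\Orth^+(M)$ with a finite order character --- the two Eisenstein lifts with trivial character, the others with characters $\chi_a$, $\chi_b$, $\chi_c$. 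Since the leading Fourier--Jacobi coefficients of these lifts are $E_4$, $E_6$, $\eta^{12}\phi_{0,A_2}$, $\eta^{12}\phi_{-2,A_2}$, $\eta^{12}\phi_{-3,A_2}$, and $\phi_{0,A_2}$, $\phi_{-2,A_2}$, $\phi_{-3,A_2}$ freely generate the module of $W(A_2)$-invariant weak Jacobi forms over $M_*(\SL_2(\ZZ))$, these five Jacobi forms --- hence the five lifts --- are algebraically independent over $\CC$.

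For part~(1), I would set $\Gamma_0=\ker\chi_a\cap\ker\chi_b\cap\ker\chi_c$, a finite index subgroup of $\Orth^+(M)$ on which all five lifts have trivial character, so that their Rankin--Cohen--Ibukiyama Jacobian $J$ is a nonzero modular form of weight $4+(4+6+6+4+3)=27$ for $\Gamma_0$ with the determinant character, vanishing along every mirror of a reflection in $\Gamma_0$. Arguing exactly as in Theorem~\ref{th:A1-2}: if some $\sigma_r$ with $(r,r)=-2$ were not in $\Gamma_0$, then one of $\chi_a,\chi_b,\chi_c$ would equal $-1$ on it, so the corresponding lift would vanish on $\cD_r$; since the $(-2)$-vectors of $2U\oplus A_2(-2)$ form a single $\Orth^+(M)$-orbit and $\Phi_{12,A_2(2)}$ has divisor exactly the $(-2)$-divisors with multiplicity one, the quotient of that lift by $\Phi_{12,A_2(2)}$ would be a nonzero holomorphic modular form of negative weight, which is impossible; the same argument with $\Borch(\phi^{(1)}_{0,A_2(2)})$ in place of $\Phi_{12,A_2(2)}$ puts every $\sigma_u$ with $(u,u)=-4$, $\div(u)=2$ in $\Gamma_0$. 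Thus $J$ vanishes along all $(-2)$- and all type-$u$ divisors, while $\Borch(\phi^{(1)}_{0,A_2(2)})\Phi_{12,A_2(2)}$ is a holomorphic form of the same weight $27$ whose divisor is precisely their union, each with multiplicity one; hence $J/\bigl(\Borch(\phi^{(1)}_{0,A_2(2)})\Phi_{12,A_2(2)}\bigr)$ is holomorphic of weight $0$, i.e.\ a nonzero constant. Theorem~\ref{th:converseJacobian} then gives that $M_*(\Gamma_0)$ is freely generated by the five lifts and that $\Gamma_0$ is generated by the reflections with mirror in $\div(J)$, i.e.\ $\Gamma_0=\Gamma_{2,4}(A_2(2))$.

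For part~(2) I would replace the fifth generator by $\Grit(\eta^{12}\phi_{-3,A_2})^2$ (character $\chi_c^2$) and set $\Gamma_0'=\ker\chi_a\cap\ker\chi_b\cap\ker\chi_c^2$; the five generators remain algebraically independent (since $\phi_{-3,A_2}^2$ is still algebraically independent of $E_4,E_6,\phi_{0,A_2},\phi_{-2,A_2}$), so their Jacobian $J'$ is a nonzero modular form of weight $4+(4+6+6+4+6)=30$ with the determinant character on $\Gamma_0'$, and the same reasoning as above places all $\sigma_r$ and $\sigma_u$ in $\Gamma_0'$. The new input is the identity $\Grit(\eta^{12}\phi_{-3,A_2})=\Borch(\phi_{0,A_2(2)})$: the fifth generator is the square of this Borcherds product, so the corresponding row of the Jacobian determinant has $\Borch(\phi_{0,A_2(2)})$ as a common factor, whence $\Borch(\phi_{0,A_2(2)})\mid J'$ and $J'$ vanishes along $\cD_v$ for all $v$ with $(v,v)=-12$, $\div(v)=6$. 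Since $\Borch(\phi_{0,A_2(2)})\Borch(\phi^{(1)}_{0,A_2(2)})\Phi_{12,A_2(2)}$ has weight $3+15+12=30$ and divisor exactly the union of the type-$v$, type-$u$ and $(-2)$-divisors with multiplicity one, all contained in $\div(J')$, the quotient is again holomorphic of weight $0$, hence a nonzero constant, and Theorem~\ref{th:converseJacobian} gives the freeness of $M_*(\Gamma_0')$ together with $\Gamma_0'=\Gamma_{2,4,12}(A_2(2))$.

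The genuinely new work, and the main obstacle, is the explicit construction and the exact determination of the divisors of $\Borch(\phi_{0,A_2(2)})$ and $\Borch(\phi^{(1)}_{0,A_2(2)})$: one must realize $\phi_{0,A_2(2)}$ and $\phi^{(1)}_{0,A_2(2)}$ as the unique $\Orth(A_2)$-invariant weak Jacobi forms of index $2$ with the prescribed $q^0$-terms, written as explicit polynomials in $\phi_{0,A_2},\phi_{-2,A_2},\phi_{-3,A_2}$ over $M_*(\SL_2(\ZZ))$, check the integrality of the relevant Fourier coefficients, and then read off from the multiplicity formula --- using that the divisors of a Borcherds product depend only on its $q^0$-term --- that the products are holomorphic and that their divisors are exactly the ones listed, in particular that $\Borch(\phi^{(1)}_{0,A_2(2)})$ has neither a $(-2)$-part nor a type-$v$ part and $\Borch(\phi_{0,A_2(2)})$ has only the type-$v$ part. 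The remaining points --- the identity $\Grit(\eta^{12}\phi_{-3,A_2})=\Borch(\phi_{0,A_2(2)})$ and the single-orbit statements for the $(-2)$- and type-$u$ vectors --- are routine from the $\norm_2$ condition and the material of \S\ref{Sec:JFs}, but are exactly what makes the reflection groups come out as claimed rather than something smaller.
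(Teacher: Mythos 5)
Your part (1) is exactly the paper's argument (the paper only writes out the $A_1(2)$ case in full and declares the other cases ``similar''; your adaptation to $2U\oplus A_2(-2)$, with $n+1=5$ generators, the kernel group $\Gamma_0$, the negative-weight quotient trick against $\Phi_{12,A_2(2)}$ and $\Borch(\phi^{(1)}_{0,A_2(2)})$, and the weight-$0$ comparison, is faithful and correct). You also correctly isolate the data that has to be supplied externally, namely the $q^0$-terms and divisors of the two Borcherds products.

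In part (2), however, there is a genuine gap. Your chain-rule observation that the row of the Jacobian coming from $\Grit(\eta^{12}\phi_{-3,A_2})^2=\Borch(\phi_{0,A_2(2)})^2$ is divisible by $\Borch(\phi_{0,A_2(2)})$ does give $\div(J')\supseteq\div(F')$ and hence $J'=cF'$, but it does \emph{not} show that the reflections $\sigma_v$ (with $(v,v)=-12$, $\div(v)=6$) lie in $\Gamma_0'=\ker\chi_a\cap\ker\chi_b\cap\ker\chi_c^2$. Theorem \ref{th:converseJacobian} requires $F'$ to vanish \emph{exactly} on the mirrors of reflections in $\Gamma_0'$; if some $\sigma_v\notin\Gamma_0'$ then $F'$ vanishes on a hyperplane that is not such a mirror, the hypothesis fails, and you can neither conclude freeness nor the identification $\Gamma_0'=\Gamma_{2,4,12}(A_2(2))$. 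This is why the paper, in the $A_1(2)$ model proof, explicitly asserts that the larger reflection group \emph{equals} the corresponding kernel intersection (``In a similar way, we show that $\Gamma_{2,4}(A_1(2))$ is the intersection of the kernels of $\chi_4$ and $\chi_5^2$''). Concretely you must check $\chi_c^2(\sigma_v)=1$ --- immediate, since $\Grit(\eta^{12}\phi_{-3,A_2})=\Borch(\phi_{0,A_2(2)})$ vanishes to order exactly one on $\cD_v$, so $\chi_c(\sigma_v)=-1$ --- and also $\chi_a(\sigma_v)=\chi_b(\sigma_v)=1$, i.e.\ that $\Grit(\eta^{12}\phi_{0,A_2})$ and $\Grit(\eta^{12}\phi_{-2,A_2})$ do not vanish on $\cD_v$. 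The negative-weight trick does not apply here ($\Borch(\phi_{0,A_2(2)})$ has weight $3\le 4,6$), so this last point needs a separate argument, e.g.\ a quasi-pullback/restriction computation using the leading Fourier--Jacobi coefficients, or a direct evaluation of the characters from Theorem \ref{th:CG2}.
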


The formal Fourier-Jacobi expansions  for $\Gamma_{2,4}(A_2(2))$ are defined as
$$
FM_{k}(\Gamma_{2,4}(A_2(2)))=\left\{ \sum_{m=0}^\infty \psi_m \xi^{m/2} \in \prod_{m=0}^\infty J_{k,A_2,m}^{W(A_2)}(v_\eta^{12m}) : f_m(n,\ell)=f_{2n}(m/2,\ell), \forall n,\ell,m \right\},
$$
where $J_{k,A_2,m}^{W(A_2)}(v_\eta^{12m})$ is the space of Jacobi forms invariant under the Weyl group $W(A_2)$. 
\begin{corollary}
The following map is an isomorphism
\begin{align*}
M_{k}(\Gamma_{2,4}(A_2(2))) &\to FM_{k}(\Gamma_{2,4}(A_2(2))),\\
F &\mapsto \text{Fourier-Jacobi expansion of $F$}.
\end{align*}
Moreover, we have
$$
\dim M_{k}(\Gamma_{2,4}(A_2(2))) = \sum_{m=0}^\infty \dim J_{k-6m,A_2,m}^{w,W(A_2)}.
$$
\end{corollary}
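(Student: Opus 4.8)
The plan is to transcribe the proof of Corollary~\ref{Cor:FFJ}, with the ring of weak Jacobi forms for $A_1$ replaced by the Wirthm\"uller ring of $W(A_2)$-invariant weak Jacobi forms, which is freely generated over $M_*(\SL_2(\ZZ))$ by $\phi_{0,A_2}$, $\phi_{-2,A_2}$, $\phi_{-3,A_2}$. The Fourier--Jacobi expansion map is an injective ring homomorphism, so all one has to check is that it is well defined and surjective. For well-definedness: each of the five generators of $M_*(\Gamma_{2,4}(A_2(2)))$ is an additive lift, hence $V$-invariant by Theorem~\ref{th:CG2} with trivial character on $\Gamma_{2,4}(A_2(2))$; therefore every modular form, being a polynomial in these five, is $V$-invariant, and its Fourier--Jacobi coefficients satisfy $f_m(n,\ell)=f_{2n}(m/2,\ell)$. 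These coefficients are moreover $W(A_2)$-invariant, because $\Gamma_{2,4}(A_2(2))$ is generated by the reflections of types $r$ and $u$ (proved exactly as in Theorem~\ref{th:A1-2}) and the root vectors of $A_2$ sitting inside the summand $A_2(-2)$ are vectors of type $u$, so the corresponding reflections belong to $\Gamma_{2,4}(A_2(2))$ and act on the $1$-dimensional cusp through $W(A_2)$. Thus the map lands in $FM_k(\Gamma_{2,4}(A_2(2)))$, and injectivity is clear. Surjectivity I would obtain from the dimension identity $\dim M_k(\Gamma_{2,4}(A_2(2)))=\dim FM_k(\Gamma_{2,4}(A_2(2)))=\sum_{m\geq 0}\dim J^{w,W(A_2)}_{k-6m,A_2,m}$, proved by squeezing.

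For the upper bound: given $\sum_m\psi_m\xi^{m/2}\in FM_k(\Gamma_{2,4}(A_2(2)))$, the relation $f_m(n,\ell)=f_{2n}(m/2,\ell)$ expresses, by induction on $m$, each coefficient of $\psi_m$ of $q$-order below $m/2$ in terms of a coefficient of some $\psi_{2n}$ with $2n<m$ (and of $q$-order at least $n$); hence a formal Fourier--Jacobi series is determined by the parts of the $\psi_m$ of $q$-order at least $m/2$. As in \cite[\S 3]{WW20} this gives $\dim FM_k(\Gamma_{2,4}(A_2(2)))\leq\sum_m\dim J^{W(A_2)}_{k,A_2,m}(v_\eta^{12m})[m/2]$, and, since $\eta^{12m}$ has weight $6m$, character $v_\eta^{12m}$, $q$-order $m/2$ and no zeros on $\HH$, division by it identifies $J^{W(A_2)}_{k,A_2,m}(v_\eta^{12m})[m/2]$ with $J^{w,W(A_2)}_{k-6m,A_2,m}$.

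For the lower bound: fix $m$ and $\phi_m\in J^{W(A_2)}_{k,A_2,m}(v_\eta^{12m})[m/2]$. Then $\phi_m/\eta^{12m}$ is a $W(A_2)$-invariant weak Jacobi form of weight $k-6m$, index $m$, trivial character, so it equals $P(E_4,E_6,\phi_{0,A_2},\phi_{-2,A_2},\phi_{-3,A_2})$ for a polynomial $P$ each of whose monomials $E_4^aE_6^b\phi_{0,A_2}^c\phi_{-2,A_2}^d\phi_{-3,A_2}^e$ has $c+d+e=m$ and $4a+6b-2d-3e=k-6m$. Replacing $E_4,E_6,\phi_{0,A_2},\phi_{-2,A_2},\phi_{-3,A_2}$ by $\cE_{4,A_2(2)},\cE_{6,A_2(2)},\Grit(\eta^{12}\phi_{0,A_2}),\Grit(\eta^{12}\phi_{-2,A_2}),\Grit(\eta^{12}\phi_{-3,A_2})$ yields an element of $M_k(\Gamma_{2,4}(A_2(2)))$: the weight of each monomial becomes $4a+6b+6c+4d+3e=(4a+6b-2d-3e)+6(c+d+e)=(k-6m)+6m=k$. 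Since the leading Fourier--Jacobi coefficient of $\cE_{4,A_2(2)}$ (resp.\ $\cE_{6,A_2(2)}$) is $E_4$ (resp.\ $E_6$) at $\xi^0$, while $\eta^{12}\phi_{\ast,A_2}$ has vanishing constant term and conductor $Q=2$, so the leading Fourier--Jacobi coefficient of $\Grit(\eta^{12}\phi_{\ast,A_2})$ is $\eta^{12}\phi_{\ast,A_2}$ at $\xi^{1/2}$, the substituted form has leading Fourier--Jacobi coefficient $\eta^{12m}\cdot\bigl(\phi_m/\eta^{12m}\bigr)\,\xi^{m/2}=\phi_m\,\xi^{m/2}$. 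Hence $\dim M_k(\Gamma_{2,4}(A_2(2)))\geq\sum_m\dim J^{w,W(A_2)}_{k-6m,A_2,m}$, and, combined with the upper bound and injectivity, this forces equality everywhere, proving both the isomorphism and the dimension formula.

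The step I expect to require the most care is the final matching of leading Fourier--Jacobi coefficients, which rests on the explicit first non-zero Fourier--Jacobi coefficients of the five generators (obtained in the freeness theorem for $\Gamma_{2,4}(A_2(2))$) together with the weight/index bookkeeping above. Everything else is a routine copy of the proof of Corollary~\ref{Cor:FFJ}.
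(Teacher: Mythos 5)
Your proposal is correct and follows essentially the same route the paper intends: the paper proves this corollary only implicitly, by declaring it "similar to Corollary~\ref{Cor:FFJ}", and your argument is precisely that adaptation — well-definedness from the shape of the generators, the upper bound on $\dim FM_k$ via the symmetry relation and division by $\eta^{12m}$, and the lower bound by substituting the five additive lifts into the polynomial expressing $\phi_m/\eta^{12m}$ in the Wirthm\"uller generators. The weight/index bookkeeping and the identification of the leading Fourier--Jacobi coefficient are carried out correctly, so nothing is missing.
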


\subsubsection{The case of $n=3$}
We need three Borcherds products and three additive lifts:
\begin{enumerate}
\item $\Borch(\phi_{0,A_2(3)})\in M_1(\Orth^+(2U\oplus A_2(-3)), \chi)$. Its divisor is the sum of $\cD_v$ with multiplicity one for $v\in 2U\oplus A_2(-3)$ primitive, $(v,v)=-18$ and $\div(v)=9$. 
\item $\Borch(\phi^{(1)}_{0,A_2(3)})\in M_{9}(\Orth^+(2U\oplus A_2(-3)),\chi)$. Its divisor is the sum of $\cD_u$ with multiplicity one for $u\in 2U\oplus A_2(-3)$ primitive, $(u,u)=-6$ and $\div(u)=3$. 
\item $\Phi_{12, A_2(3)} \in M_{12}(\Orth^+(2U\oplus A_2(-3)), \chi)$. Its divisor is the sum of $\cD_r$ with multiplicity one for $r\in 2U\oplus A_2(-3)$ with $(r,r)=-2$. 
\end{enumerate}

\begin{align*}
\Grit(\eta^{8}\phi_{0,A_2})\in M_4(\Orth^+(2U\oplus A_2(-3)), \chi),\\
\Grit(\eta^{8}\phi_{-2,A_2})\in M_2(\Orth^+(2U\oplus A_2(-3)), \chi),\\
\Grit(\eta^{8}\phi_{-3,A_2})\in M_1(\Orth^+(2U\oplus A_2(-3)), \chi).
\end{align*}

We remark that $\Grit(\eta^{8}\phi_{-3,A_2})=\Borch(\phi_{0,A_2(3)})$ and $\Grit(\Theta_{A_2(3)})=\Borch(\phi^{(1)}_{0,A_2(3)})$, where
$$
\Theta_{A_2(3)}=\eta^{15}(\tau)\vartheta(\tau,z_1+z_2)\vartheta(\tau,2z_2-z_1)\vartheta(\tau,2z_1-z_2)\in J_{9,A_2,3}.
$$
The following identities hold
$$
\phi_{0,A_2(3)}=-\frac{(\eta^{-1}(\tau)\vartheta(\tau,z_1)\vartheta(\tau,z_1-z_2)\vartheta(\tau,z_2))\lvert T_{-}^{(3)}(4)}{\eta^{-1}(\tau)\vartheta(\tau,z_1)\vartheta(\tau,z_1-z_2)\vartheta(\tau,z_2)}, \quad
\phi^{(1)}_{0,A_2(3)}=-\frac{\Theta_{A_2(3)}\lvert T_{-}^{(1)}(2)}{\Theta_{A_2(3)}}.
$$

\begin{theorem}
Let $\Gamma_{2,6}(A_2(3))$ be the subgroup of $\Orth^+(2U\oplus A_2(-3))$ generated by reflections associated to vectors of types $r$ and $u$ above.  Let $\Gamma_{2,6,18}(A_2(3))$ be the subgroup of $\Orth^+(2U\oplus A_2(-3))$ generated by reflections associated to vectors of types $r$, $u$ and $v$ above.  
\begin{enumerate}
\item The graded algebra $M_*(\Gamma_{2,6}(A_2(3)))$ is freely generated by $\cE_{4, A_2(3)}$, $\cE_{6, A_2(3)}$, $\Grit(\eta^{8}\phi_{0,A_2})$, $\Grit(\eta^{8}\phi_{-2,A_2})$, and $\Grit(\eta^{8}\phi_{-3,A_2})$. The Jacobian determinant equals\\
$\Borch(\phi^{(1)}_{0,A_2(3)})\Phi_{12, A_2(3)}$ up to a constant. $(21-4=4+6+4+2+1)$
\item The graded algebra $M_*(\Gamma_{2,6,18}(A_2(3)))$ is freely generated by $\cE_{4, A_2(3)}$, $\cE_{6, A_2(3)}$, $\Grit(\eta^{8}\phi_{0,A_2})$, $\Grit(\eta^{8}\phi_{-2,A_2})$, and $\Grit(\eta^{8}\phi_{-3,A_2})^2$. The Jacobian determinant equals\\
$\Borch(\phi_{0,A_2(3)})\Borch(\phi^{(1)}_{0,A_2(3)})\Phi_{12, A_2(3)}$ up to a constant. $(22-4=4+6+4+2+2)$
\end{enumerate}
\end{theorem}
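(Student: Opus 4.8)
The plan is to run essentially the same argument as in the proof of Theorem~\ref{th:A1-2}. In both parts I work with the five additive lifts $\cE_{4,A_2(3)}$, $\cE_{6,A_2(3)}$, $\Grit(\eta^{8}\phi_{0,A_2})$, $\Grit(\eta^{8}\phi_{-2,A_2})$, $\Grit(\eta^{8}\phi_{-3,A_2})$ of weights $4,6,4,2,1$, whose leading Fourier--Jacobi coefficients are respectively $E_4$, $E_6$, $\eta^{8}\phi_{0,A_2}$, $\eta^{8}\phi_{-2,A_2}$, $\eta^{8}\phi_{-3,A_2}$; write $\chi_1,\chi_2,\chi_3$ for the characters of the last three. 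Since $E_4,E_6$ freely generate $M_*(\SL_2(\ZZ))$ and $\phi_{0,A_2},\phi_{-2,A_2},\phi_{-3,A_2}$ freely generate the bigraded ring of $W(A_2)$-invariant weak Jacobi forms over $M_*(\SL_2(\ZZ))$, these five Jacobi forms are algebraically independent over $\CC$, hence so are the five additive lifts, so their Jacobian is not identically zero.

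For part (1) I set $\Gamma_0=\ker\chi_1\cap\ker\chi_2\cap\ker\chi_3$, a finite index subgroup of $\Orth^+(2U\oplus A_2(-3))$ on which the five lifts are modular with trivial character; their Jacobian $J$ is then a nonzero modular form of weight $4+6+4+2+1+4=21$ on $\Gamma_0$ with the determinant character, vanishing on every mirror of a reflection in $\Gamma_0$. I then check that $\sigma_r\in\Gamma_0$ for all $r$ with $(r,r)=-2$ and $\sigma_u\in\Gamma_0$ for all primitive $u$ with $(u,u)=-6$, $\div(u)=3$: the divisor of $\Borch(\phi_{0,A_2(3)})=\Grit(\eta^{8}\phi_{-3,A_2})$ is exactly $\cD_v$, so this form does not vanish on $\cD_r$ or $\cD_u$, giving $\chi_3(\sigma_r)=\chi_3(\sigma_u)=1$; if $\chi_1(\sigma_r)$ or $\chi_2(\sigma_r)$ were nontrivial, then $\Grit(\eta^{8}\phi_{0,A_2})$ or $\Grit(\eta^{8}\phi_{-2,A_2})$ would vanish on $\cD_r$, so its quotient by $\Phi_{12,A_2(3)}$ (which vanishes exactly on all $\cD_r$ with $(r,r)=-2$) would be a nonzero holomorphic modular form of negative weight, a contradiction; the same argument with $\Borch(\phi^{(1)}_{0,A_2(3)})$ (weight $9$, vanishing exactly on $\cD_u$) in place of $\Phi_{12,A_2(3)}$ gives $\chi_1(\sigma_u)=\chi_2(\sigma_u)=1$. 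Consequently $\Borch(\phi^{(1)}_{0,A_2(3)})\,\Phi_{12,A_2(3)}$ is a weight-$21$ modular form on $\Gamma_0$ whose zero divisor $\cD_u+\cD_r$ (with multiplicity one) is contained in that of $J$, so the quotient $J/(\Borch(\phi^{(1)}_{0,A_2(3)})\Phi_{12,A_2(3)})$ is a holomorphic modular form of weight $0$, hence a nonzero constant. Thus $J$ vanishes exactly on the mirrors of reflections in $\Gamma_0$, each with multiplicity one, and Theorem~\ref{th:converseJacobian} yields that $M_*(\Gamma_0)$ is freely generated by the five lifts and that $\Gamma_0$ is generated by precisely the reflections of types $r$ and $u$; that is, $\Gamma_0=\Gamma_{2,6}(A_2(3))$.

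For part (2) I set $\Gamma_0'=\ker\chi_1\cap\ker\chi_2\cap\ker\chi_3^2\supseteq\Gamma_0$, on which $\cE_{4,A_2(3)}$, $\cE_{6,A_2(3)}$, $\Grit(\eta^{8}\phi_{0,A_2})$, $\Grit(\eta^{8}\phi_{-2,A_2})$ and $\Grit(\eta^{8}\phi_{-3,A_2})^2$ are modular with trivial character. Putting $g:=\Grit(\eta^{8}\phi_{-3,A_2})$, the Rankin--Cohen--Ibukiyama determinant of these five forms equals $2g$ times the Jacobian $J$ of part (1) (differentiating $g^2$ produces the factor $2g$), so by part (1) it equals $c\,\Borch(\phi_{0,A_2(3)})\,\Borch(\phi^{(1)}_{0,A_2(3)})\,\Phi_{12,A_2(3)}$ for some $c\neq 0$, a nonzero modular form of weight $22$ on $\Gamma_0'$ with zero divisor $\cD_v+\cD_u+\cD_r$, each with multiplicity one. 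As in part (1), $\sigma_r,\sigma_u\in\Gamma_0\subseteq\Gamma_0'$; and $\sigma_v\in\Gamma_0'$ because $\Borch(\phi_{0,A_2(3)})$ vanishes with multiplicity one on its mirror $\cD_v$, whence $\chi_3(\sigma_v)=-1$ and so $\chi_3^2(\sigma_v)=1$, while $\chi_1(\sigma_v)=\chi_2(\sigma_v)=1$ because neither $\Grit(\eta^{8}\phi_{0,A_2})$ nor $\Grit(\eta^{8}\phi_{-2,A_2})$ vanishes on $\cD_v$. Hence the Jacobian vanishes exactly on the mirrors of reflections in $\Gamma_0'$, each with multiplicity one, and Theorem~\ref{th:converseJacobian} gives that $M_*(\Gamma_0')$ is freely generated by the five forms and that $\Gamma_0'$ is generated by precisely the reflections of types $r$, $u$ and $v$; that is, $\Gamma_0'=\Gamma_{2,6,18}(A_2(3))$.

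The skeleton above is purely formal, and the real content is the input data recalled just before the statement. The main obstacle is to establish those facts: the exact divisors and characters of $\Phi_{12,A_2(3)}$, $\Borch(\phi_{0,A_2(3)})$ and $\Borch(\phi^{(1)}_{0,A_2(3)})$ --- obtained by writing $\phi_{0,A_2(3)}$ and $\phi^{(1)}_{0,A_2(3)}$ as polynomials in $\phi_{0,A_2},\phi_{-2,A_2},\phi_{-3,A_2},E_4,E_6$, reading off their $q^0$-terms, and applying the divisor formula for Borcherds products --- together with the identifications $\Grit(\eta^{8}\phi_{-3,A_2})=\Borch(\phi_{0,A_2(3)})$ and $\Grit(\Theta_{A_2(3)})=\Borch(\phi^{(1)}_{0,A_2(3)})$, which follow by matching leading theta-block coefficients. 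The one point genuinely special to part (2) is the non-vanishing of $\Grit(\eta^{8}\phi_{0,A_2})$ and $\Grit(\eta^{8}\phi_{-2,A_2})$ on $\cD_v$, equivalently $\chi_1(\sigma_v)=\chi_2(\sigma_v)=1$: here a weight count no longer forces a contradiction, so one must determine the two characters explicitly, or restrict the lifts to $v^\perp$.
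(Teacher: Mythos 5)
Your proposal is correct and follows essentially the same route as the paper's model proof (Theorem \ref{th:A1-2}): pass to the intersection of the character kernels, get a nonzero Jacobian with the determinant character, match it against the product of reflective Borcherds forms by weight and divisor, and invoke Theorem \ref{th:converseJacobian}. The one step you flag in part (2) --- that $\chi_1(\sigma_v)=\chi_2(\sigma_v)=1$ --- closes more easily than you suggest: by Theorem \ref{th:CG2} the characters of $\Grit(\eta^{8}\phi_{0,A_2})$ and $\Grit(\eta^{8}\phi_{-2,A_2})$ have order dividing $Q=3$ (the Heisenberg part $\nu^{2}$ being trivial on $H_s(A_2)$ and the inputs being $\Orth(A_2)$-invariant), so they are automatically trivial on any order-two element such as $\sigma_v$; no explicit restriction to $v^\perp$ is needed.
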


The formal Fourier-Jacobi expansions  for $\Gamma_{2,6}(A_2(3))$ are defined as
$$
FM_{k}(\Gamma_{2,6}(A_2(3)))=\left\{ \sum_{m=0}^\infty \psi_m \xi^{m/3} \in \prod_{m=0}^\infty J_{k,A_2,m}^{W(A_2)}(v_\eta^{8m}) : f_m(n,\ell)=f_{3n}(m/3,\ell), \forall n,\ell,m \right\},
$$
where $J_{k,A_2,m}^{W(A_2)}(v_\eta^{8m})$ is the space of Jacobi forms invariant under the Weyl group $W(A_2)$. 
\begin{corollary}
The following map is an isomorphism
\begin{align*}
M_{k}(\Gamma_{2,6}(A_2(3))) &\to FM_{k}(\Gamma_{2,6}(A_2(3))),\\
F &\mapsto \text{Fourier-Jacobi expansion of $F$}.
\end{align*}
Moreover, we have
$$
\dim M_{k}(\Gamma_{2,6}(A_2(3))) = \sum_{m=0}^\infty \dim J_{k-4m,A_2,m}^{w,W(A_2)}.
$$
\end{corollary}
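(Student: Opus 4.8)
The plan is to follow verbatim the argument of Corollary~\ref{Cor:FFJ}, making the substitutions $A_1(2)\to A_2(3)$, $\eta^{12}\to\eta^8$, $\xi^{m/2}\to\xi^{m/3}$, $v_\eta^{12m}\to v_\eta^{8m}$, and replacing the pair $\{\phi_{0,1},\phi_{-2,1}\}$ by the Wirthm\"uller generators $\{\phi_{0,A_2},\phi_{-2,A_2},\phi_{-3,A_2}\}$. First I would check that the map is well defined. By the previous theorem every $F\in M_k(\Gamma_{2,6}(A_2(3)))$ is a polynomial in the five free generators $\cE_{4,A_2(3)}$, $\cE_{6,A_2(3)}$, $\Grit(\eta^8\phi_{0,A_2})$, $\Grit(\eta^8\phi_{-2,A_2})$, $\Grit(\eta^8\phi_{-3,A_2})$, each of which is an additive lift of a $W(A_2)$-invariant Jacobi form. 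Hence each Fourier--Jacobi coefficient of $F$ is a $W(A_2)$-invariant Jacobi form for $A_2$ with the prescribed $\eta$-character $v_\eta^{8m}$ in the $\xi^{m/3}$-slot, and the symmetry relation $f_m(n,\ell)=f_{3n}(m/3,\ell)$ holds because it is equivalent to invariance under the involution $V\colon(\tau,\mathfrak{z},\omega)\mapsto(\omega,\mathfrak{z},\tau)$, which holds for every Gritsenko lift by Theorem~\ref{th:CG2} and is preserved under products. Therefore $F$ maps into $FM_k(\Gamma_{2,6}(A_2(3)))$, and the map is injective since a modular form is determined by its Fourier--Jacobi expansion.

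For surjectivity I would bound dimensions from both sides. On the one hand, arguing as in \cite[\S 3]{WW20}, the symmetry relation $f_m(n,\ell)=f_{3n}(m/3,\ell)$ pins down every coefficient $f_m(n,\ell)$ with $3n<m$ in terms of Fourier--Jacobi coefficients of smaller index, so a symmetric formal expansion is determined by the residual freedom in each slot, namely by an element of $J_{k,A_2,m}^{W(A_2)}(v_\eta^{8m})[m/3]$; since $\eta^{8m}$ begins with $q^{m/3}$, division by it identifies this space with $J_{k-4m,A_2,m}^{w,W(A_2)}$, whence $\dim FM_k(\Gamma_{2,6}(A_2(3)))\le\sum_{m\ge 0}\dim J_{k-4m,A_2,m}^{w,W(A_2)}$, a finite sum. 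On the other hand, given any $\phi_m\in J_{k,A_2,m}^{W(A_2)}(v_\eta^{8m})[m/3]$, the quotient $\phi_m/\eta^{8m}$ is a $W(A_2)$-invariant weak Jacobi form of weight $k-4m$ and index $m$ with trivial character, so by Wirthm\"uller's theorem \cite{Wir92} there is a polynomial $P$, isobaric of index $m$, with $\phi_m/\eta^{8m}=P(E_4,E_6,\phi_{0,A_2},\phi_{-2,A_2},\phi_{-3,A_2})$. Then $P(\cE_{4,A_2(3)},\cE_{6,A_2(3)},\Grit(\eta^8\phi_{0,A_2}),\Grit(\eta^8\phi_{-2,A_2}),\Grit(\eta^8\phi_{-3,A_2}))$ lies in $M_k(\Gamma_{2,6}(A_2(3)))$; because $\cE_{4,A_2(3)},\cE_{6,A_2(3)}$ have $\xi^{1/3}$-valuation $0$ with leading coefficients $E_4,E_6$, while each $\Grit(\eta^8\phi_{\bullet,A_2})$ has $\xi^{1/3}$-valuation $1$ with leading coefficient $\eta^8\phi_{\bullet,A_2}$, the leading Fourier--Jacobi coefficient of this form is $\eta^{8m}P(E_4,E_6,\phi_{0,A_2},\phi_{-2,A_2},\phi_{-3,A_2})\,\xi^{m/3}=\phi_m\,\xi^{m/3}$. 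This yields $\dim M_k(\Gamma_{2,6}(A_2(3)))\ge\sum_{m\ge 0}\dim J_{k-4m,A_2,m}^{w,W(A_2)}$.

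Combining the two inequalities with the injection $M_k(\Gamma_{2,6}(A_2(3)))\hookrightarrow FM_k(\Gamma_{2,6}(A_2(3)))$ forces equality throughout, so the injective map is an isomorphism and $\dim M_k(\Gamma_{2,6}(A_2(3)))=\sum_{m\ge 0}\dim J_{k-4m,A_2,m}^{w,W(A_2)}$, as claimed.

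The only genuinely delicate step is the upper bound for $\dim FM_k$: one must carry out the inductive bookkeeping of \cite[\S 3]{WW20} to confirm that, after using the symmetry relation, the sole free parameter in slot $m$ is an element of $J_{k,A_2,m}^{W(A_2)}(v_\eta^{8m})[m/3]$ (in particular that the sum over $m$ is finite because these spaces vanish once $k-4m$ is sufficiently negative). Everything else---well-definedness, injectivity, and the matching of leading Fourier--Jacobi coefficients---is entirely parallel to the case $L=A_1(2)$ and requires only the explicit shape of the five generators recorded above.
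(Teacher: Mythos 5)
Your proposal is correct and is essentially the argument the paper intends: the paper states this corollary as "similar to Corollary~3.2," and your proof is exactly that adaptation — well-definedness and injectivity from the shape of the five generators, the upper bound on $\dim FM_k$ via the symmetry relation and the identification $J_{k,A_2,m}^{W(A_2)}(v_\eta^{8m})[m/3]\cong J_{k-4m,A_2,m}^{w,W(A_2)}$ after dividing by $\eta^{8m}$, and the matching lower bound by realizing each such Jacobi form as the leading Fourier--Jacobi coefficient of a polynomial in the generators via Wirthm\"uller's theorem. No gaps.
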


\subsection{The \texorpdfstring{$A_3$}{A3} case}
We study the case of $2U\oplus A_3(-2)$. It is clear that $A_3$ is isomorphic to $D_3$. We use the following model of $D_3\cong A_3$:
$$
D_3=\{ x\in \ZZ^3:  x_1+x_2+x_3 \equiv 0 \m 2\}, \; \mathfrak{z}=(z_1,z_2,z_3)\in \CC^3, \; \zeta_j=e^{2\pi i z_j}, j=1,2,3.
$$
The bigraded ring of weak Jacobi forms invariant under the Weyl group $W(A_3)$ is freely generated over $M_*(\SL_2(\ZZ))$ by four forms of index one $\phi_{0,A_3}$, $\phi_{-2,A_3}$, $\phi_{-3,A_3}$ and $\phi_{-4,A_3}$ (see \cite{Wir92} and \cite[\S 2]{WW20}). The four generators were first constructed in \cite{Ber99} and they have the following Fourier expansions
\begin{align*}
\phi_{0,A_3}&=[1,0,0] +18 +O(q) \in J_{0,A_3,1}^w,\\
\phi_{-2,A_3}&=2[\frac{1}{2},\frac{1}{2},\frac{1}{2}]_0+2[\frac{1}{2},\frac{1}{2},\frac{1}{2}]_1+[1,0,0] -22 +O(q) \in J_{-2,A_3,1}^w,\\
\phi_{-3,A_3}&=[\frac{1}{2},\frac{1}{2},\frac{1}{2}]_0-[\frac{1}{2},\frac{1}{2},\frac{1}{2}]_1 +O(q) \in J_{-3,A_3,1}^w,\\
\phi_{-4,A_3}&=[\frac{1}{2},\frac{1}{2},\frac{1}{2}]_0+[\frac{1}{2},\frac{1}{2},\frac{1}{2}]_1-[1,0,0] -2 +O(q) \in J_{-4,A_3,1}^w,
\end{align*}
where $[1,0,0]=\sum_{j=1}^3\zeta_j^{\pm 1}$,  and $[\frac{1}{2},\frac{1}{2},\frac{1}{2}]_0$ (resp. $[\frac{1}{2},\frac{1}{2},\frac{1}{2}]_1$) is the sum of terms of the form $\zeta_1^{\pm \frac{1}{2}}\zeta_2^{\pm \frac{1}{2}}\zeta_3^{\pm \frac{1}{2}}$ with even (resp. odd) number of $-\frac{1}{2}$.
 
The group  $\Orth(A_3)$ is generated by $W(A_3)$ and the operator $\mathfrak{z}\mapsto -\mathfrak{z}$. The forms of even weight are $\Orth(A_3)$-invariant. The forms of odd weight are anti-invariant with respect to the operator  $\mathfrak{z}\mapsto -\mathfrak{z}$.

We can construct the following unique weak Jacobi forms with given Fourier expansions in terms of the above generators:
\begin{align*}
\phi_{0,A_3(2)}&=[1,0,0]+6+O(q)\in J_{0,A_3,2}^{w,\Orth(A_3)},\\
\phi^{(1)}_{0,A_3(2)}&=\sum_{1\leq i< j \leq 3}\zeta_i^{\pm 1}\zeta_j^{\pm 1}+36+O(q)\in J_{0,A_3,2}^{w,\Orth(A_3)}.
\end{align*}

There are three reflective Borcherds products:
\begin{enumerate}
\item $\Borch(\phi_{0,A_3(2)})\in M_3(\Orth^+(2U\oplus A_3(-2)), \chi)$. Its divisor is the sum of $\cD_v$ with multiplicity one for $v\in 2U\oplus A_3(-2)$ primitive, $(v,v)=-8$ and $\div(v)=4$. 
\item $\Borch(\phi^{(1)}_{0,A_3(2)})\in M_{18}(\Orth^+(2U\oplus A_3(-2)),\chi)$. Its divisor is the sum of $\cD_u$ with multiplicity one for $u\in 2U\oplus A_3(-2)$ primitive, $(u,u)=-4$ and $\div(u)=2$. 
\item $\Phi_{12, A_3(2)} \in M_{12}(\Orth^+(2U\oplus A_3(-2)), \chi)$. Its divisor is the sum of $\cD_r$ with multiplicity one for $r\in 2U\oplus A_3(-2)$ with $(r,r)=-2$. 
\end{enumerate}
There are also four additive lifts:
\begin{align*}
\Grit(\eta^{12}\phi_{0,A_3})\in M_6(\Orth^+(2U\oplus A_3(-2)), \chi),\\
\Grit(\eta^{12}\phi_{-2,A_3})\in M_4(\Orth^+(2U\oplus A_3(-2)), \chi),\\
\Grit(\eta^{12}\phi_{-3,A_3})\in M_3(\Orth^+(2U\oplus A_3(-2)), \chi),\\
\Grit(\eta^{12}\phi_{-4,A_3})\in M_2(\Orth^+(2U\oplus A_3(-2)), \chi).
\end{align*}

We remark that $\Grit(\eta^{12}\phi_{-3,A_3})=\Borch(\phi_{0,A_3(2)})$ and 
$$
\phi_{0,A_3(2)}=-\frac{(\eta^{3}(\tau)\vartheta(\tau,z_1)\vartheta(\tau,z_2)\vartheta(\tau,z_3))\lvert T_{-}^{(2)}(3)}{\eta^{3}(\tau)\vartheta(\tau,z_1)\vartheta(\tau,z_2)\vartheta(\tau,z_3)}.
$$

\begin{theorem}
Let $\Gamma_{2,4}(A_3(2))$ be the subgroup of $\Orth^+(2U\oplus A_3(-2))$ generated by reflections associated to vectors of types $r$ and $u$ above.  Let $\Gamma_{2,4,8}(A_3(2))$ be the subgroup of $\Orth^+(2U\oplus A_3(-2))$ generated by reflections associated to vectors of types $r$, $u$ and $v$ above.  
\begin{enumerate}
\item The graded algebra $M_*(\Gamma_{2,4}(A_3(2)))$ is freely generated by $\cE_{4, A_3(2)}$, $\cE_{6, A_3(2)}$, $\Grit(\eta^{12}\phi_{0,A_3})$, $\Grit(\eta^{12}\phi_{-2,A_3})$, $\Grit(\eta^{12}\phi_{-3,A_3})$, and $\Grit(\eta^{12}\phi_{-4,A_3})$. The Jacobian determinant equals\\
$\Borch(\phi^{(1)}_{0,A_3(2)})\Phi_{12, A_3(2)}$ up to a constant. $(30-5=4+6+6+4+3+2)$
\item The graded algebra $M_*(\Gamma_{2,4,8}(A_3(2)))$ is freely generated by $\cE_{4, A_3(2)}$, $\cE_{6, A_3(2)}$, $\Grit(\eta^{12}\phi_{0,A_3})$, $\Grit(\eta^{12}\phi_{-2,A_3})$, $\Grit(\eta^{12}\phi_{-3,A_3})^2$, and $\Grit(\eta^{12}\phi_{-4,A_3})$. The Jacobian determinant equals\\
$\Borch(\phi_{0,A_3(2)})\Borch(\phi^{(1)}_{0,A_3(2)})\Phi_{12, A_3(2)}$ up to a constant. $(33-5=4+6+6+4+6+2)$
\end{enumerate}
\end{theorem}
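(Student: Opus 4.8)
The plan is to follow, step for step, the proof of Theorem~\ref{th:A1-2}. Since $M=2U\oplus A_3(-2)$ has signature $(2,5)$ we must produce $6=n+1$ algebraically independent modular forms and match a reflective form with their Jacobian. The six candidates are the Eisenstein lifts $\cE_{4,A_3(2)},\cE_{6,A_3(2)}$ from point~(4) of \S\ref{Sec:freealgebras}, together with the four additive lifts of $\eta^{12}\phi_{0,A_3},\eta^{12}\phi_{-2,A_3},\eta^{12}\phi_{-3,A_3},\eta^{12}\phi_{-4,A_3}$ (for part~(2) the third lift is replaced by its square). First I would verify that each $\eta^{12}\phi_{j,A_3}$ is a holomorphic Jacobi form in $J_{*,A_3,1}(\upsilon_\eta^{12})$, and that it is $\Orth(A_3)$-invariant for $j\in\{0,-2,-4\}$ and anti-invariant under $\mathfrak z\mapsto-\mathfrak z$ for $j=-3$; then Theorem~\ref{th:CG2} and point~(1) produce modular forms for $\Orth^+(M)$ with characters whose leading Fourier--Jacobi coefficients are $E_4,E_6$ and the four $\eta^{12}\phi_{j,A_3}$. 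Because $\phi_{0,A_3},\phi_{-2,A_3},\phi_{-3,A_3},\phi_{-4,A_3}$ freely generate the $W(A_3)$-invariant weak Jacobi forms over $M_*(\SL_2(\ZZ))$, these six leading coefficients are algebraically independent, hence so are the six lifts, and their Jacobian is a non-zero modular form with the determinant character on the finite index group obtained by intersecting the kernels of the characters of the non-Eisenstein lifts; call it $\Gamma_0$ in case~(1) and $\Gamma_0'$ in case~(2).

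On the reflective side I would record that $\Phi_{12,A_3(2)}$ from point~(3) vanishes with multiplicity one exactly on the $(-2)$-divisors, and that point~(2), applied to the $q^0$-expansions of $\phi_{0,A_3(2)}$ and $\phi^{(1)}_{0,A_3(2)}$, shows that $\Borch(\phi_{0,A_3(2)})$ has divisor the sum of $\cD_v$ ($(v,v)=-8$, $\div(v)=4$) with multiplicity one, that $\Borch(\phi^{(1)}_{0,A_3(2)})$ has divisor the sum of $\cD_u$ ($(u,u)=-4$, $\div(u)=2$) with multiplicity one, and that $\Grit(\eta^{12}\phi_{-3,A_3})=\Borch(\phi_{0,A_3(2)})$ as stated before the theorem. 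For part~(1) I would then show $\sigma_r,\sigma_u\in\Gamma_0$: if, say, $\sigma_u\notin\Gamma_0$, then some lift $G$ has character $-1$ on $\sigma_u$, hence vanishes on $\cD_u$, hence, all such reflections being $\Orth^+(M)$-conjugate, on every mirror of type $u$; then $G/\Borch(\phi^{(1)}_{0,A_3(2)})$ is a holomorphic modular form of weight $\leq 6-18<0$, which must be zero, a contradiction, and one argues likewise for type $r$ using $\Phi_{12,A_3(2)}$. Consequently the Jacobian vanishes to order at least one along every $\cD_r$ and $\cD_u$, so $\Borch(\phi^{(1)}_{0,A_3(2)})\Phi_{12,A_3(2)}$ divided by the Jacobian is holomorphic of weight $0$, whence the Jacobian equals $\Borch(\phi^{(1)}_{0,A_3(2)})\Phi_{12,A_3(2)}$ up to a non-zero constant. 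Theorem~\ref{th:converseJacobian} then gives the freeness of $M_*(\Gamma_0)$ on the six lifts and identifies $\Gamma_0$ with the reflection group cut out by $\cD_r\cup\cD_u$, that is $\Gamma_{2,4}(A_3(2))$.

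For part~(2) I would exploit the freeness just obtained. With $\Grit(\eta^{12}\phi_{-3,A_3})$ replaced by its square the corresponding character is squared, and by the product rule $\operatorname{Jac}(\dots,f^2,\dots)=2f\,\operatorname{Jac}(\dots,f,\dots)$ for the Rankin--Cohen--Ibukiyama determinant (see \cite[Theorem~2.5]{Wan20}), together with part~(1) and $\Grit(\eta^{12}\phi_{-3,A_3})=\Borch(\phi_{0,A_3(2)})$, the new Jacobian equals $\Borch(\phi_{0,A_3(2)})\Borch(\phi^{(1)}_{0,A_3(2)})\Phi_{12,A_3(2)}$ up to a non-zero constant, with divisor exactly $\cD_r\cup\cD_u\cup\cD_v$. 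It remains to see that $\sigma_r,\sigma_u,\sigma_v$ all lie in $\Gamma_0'$. For $\sigma_r,\sigma_u$ this follows from part~(1) (the value of a squared character on an order-$2$ element is automatically trivial). For $\sigma_v$: the simple zero of $\Borch(\phi_{0,A_3(2)})$ along $\cD_v$ forces the character of $\Grit(\eta^{12}\phi_{-3,A_3})$ to be $-1$ there, hence its square trivial; were the character of $\Grit(\eta^{12}\phi_{-4,A_3})$ (weight $2$) equal to $-1$ on $\sigma_v$, the quotient $\Grit(\eta^{12}\phi_{-4,A_3})/\Borch(\phi_{0,A_3(2)})$ would be holomorphic of weight $-1$, hence $0$, which is absurd; likewise $\Grit(\eta^{12}\phi_{-2,A_3})/\Borch(\phi_{0,A_3(2)})$ would lie in $M_1(\Gamma_0)=0$ by part~(1), again absurd; and $\Grit(\eta^{12}\phi_{0,A_3})/\Borch(\phi_{0,A_3(2)})$ would be a weight-$3$ element of $M_3(\Gamma_0)=\CC\,\Grit(\eta^{12}\phi_{-3,A_3})$, forcing $\Grit(\eta^{12}\phi_{0,A_3})$ to be proportional to $\Grit(\eta^{12}\phi_{-3,A_3})^2$ and contradicting the algebraic independence of the six lifts. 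Hence $\sigma_v\in\Gamma_0'$, every reflection of $\Gamma_0'$ has its mirror among $\cD_r,\cD_u,\cD_v$ by the determinant character of the Jacobian, and a second application of Theorem~\ref{th:converseJacobian} yields the freeness of $M_*(\Gamma_0')$ and the identification $\Gamma_0'=\Gamma_{2,4,8}(A_3(2))$.

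The main obstacle is not a single hard step but the volume of explicit bookkeeping, identical in form to the $A_1(2)$ case but bulkier: verifying the $q^0$-expansions and $\Orth(A_3)$-(anti)invariance of $\phi_{0,A_3(2)}$ and $\phi^{(1)}_{0,A_3(2)}$, computing the divisors of the two Borcherds products, and tracking which additive lift vanishes on which divisor. The one genuinely new subtlety is pinning down the characters on $\sigma_v$ in part~(2): the uniform ``divide by a reflective form of strictly larger weight'' argument fails for the weight-$4$ and weight-$6$ lifts because the only reflective form vanishing solely on $\cD_v$ has weight $3$, so one must instead feed back the freeness of $M_*(\Gamma_0)$ proved in part~(1).
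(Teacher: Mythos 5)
Your proposal is correct and follows exactly the template the paper uses (it proves only the $A_1(2)$ case in full and declares the others, including this one, "similar"): additive lifts with algebraically independent leading Fourier--Jacobi coefficients, comparison of the Jacobian with a product of reflective Borcherds forms by weight and divisor, and Theorem \ref{th:converseJacobian}. Your part (2) is in fact slightly more careful than the paper's "in a similar way" remark --- in particular your observation that the triviality of the characters of the weight-$4$ and weight-$6$ lifts on $\sigma_v$ cannot be settled by the naive quotient-by-a-larger-weight-form argument and must instead be extracted from the freeness already established in part (1) --- and this fills a detail the paper leaves implicit rather than deviating from its method.
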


The formal Fourier-Jacobi expansions  for $\Gamma_{2,4}(A_3(2))$ are defined as
$$
FM_{k}(\Gamma_{2,4}(A_3(2)))=\left\{ \sum_{m=0}^\infty \psi_m \xi^{m/2} \in \prod_{m=0}^\infty J_{k,m}^{W(A_3)}(v_\eta^{12m}) : f_m(n,\ell)=f_{2n}(m/2,\ell), \forall n,\ell,m \right\},
$$
where $J_{k,A_3,m}^{W(A_3)}(v_\eta^{12m})$ is the space of Jacobi forms invariant under the Weyl group $W(A_3)$. 
\begin{corollary}
The following map is an isomorphism
\begin{align*}
M_{k}(\Gamma_{2,4}(A_3(2))) &\to FM_{k}(\Gamma_{2,4}(A_3(2))),\\
F &\mapsto \text{Fourier-Jacobi expansion of $F$}.
\end{align*}
Moreover, we have
$$
\dim M_{k}(\Gamma_{2,4}(A_3(2))) = \sum_{m=0}^\infty \dim J_{k-6m,A_3,m}^{w,W(A_3)}.
$$
\end{corollary}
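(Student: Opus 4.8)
The plan is to repeat the argument of Corollary \ref{Cor:FFJ} verbatim, replacing the two Eichler--Zagier generators $\phi_{0,1},\phi_{-2,1}$ of weak Jacobi forms for $A_1$ by the four Wirthm\"uller/Bertola generators $\phi_{0,A_3},\phi_{-2,A_3},\phi_{-3,A_3},\phi_{-4,A_3}$ of the bigraded ring $J^{w,W(A_3)}_{*,A_3,*}$ of $W(A_3)$-invariant weak Jacobi forms over $M_*(\SL_2(\ZZ))$, and keeping $\eta^{12}$ in the same role. Note that admitting odd $k$ is exactly what forces the Weyl-invariant (rather than $\Orth(A_3)$-invariant) setting and the use of the odd-weight generator $\phi_{-3,A_3}$.

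First I would check that the map is well defined. Each of the six generators of $M_*(\Gamma_{2,4}(A_3(2)))$ produced in the previous theorem is a Gritsenko lift of a Jacobi form of index $1$ whose character is a power of $v_\eta^{12}$, so by Theorem \ref{th:CG2} its Fourier--Jacobi expansion is $V$-symmetric; hence every Fourier--Jacobi coefficient $\psi_m$ of a modular form for $\Gamma_{2,4}(A_3(2))$ lies in $J_{k,A_3,m}(v_\eta^{12m})$ and satisfies the symmetry relation $f_m(n,\ell)=f_{2n}(m/2,\ell)$. Moreover the reflections of type $u$ are precisely the reflections in the roots of $A_3(-2)$, and they generate a copy of $W(A_3)$ acting on the $\mathfrak z$-variable and contained in $\Gamma_{2,4}(A_3(2))$; therefore all Fourier--Jacobi coefficients are $W(A_3)$-invariant, and the map indeed takes values in $FM_k(\Gamma_{2,4}(A_3(2)))$. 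Injectivity is clear.

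For surjectivity I would compare dimensions. On the one hand, the symmetry relation $f_m(n,\ell)=f_{2n}(m/2,\ell)$, via an induction on $m$ identical to \cite[\S 3]{WW20}, shows that a formal Fourier--Jacobi expansion is determined by the $q^{<m/2}$-parts of its coefficients, so
$$
\dim FM_k(\Gamma_{2,4}(A_3(2)))\ \leq\ \sum_{m=0}^\infty\dim J_{k,A_3,m}^{W(A_3)}(v_\eta^{12m})[m/2]\ =\ \sum_{m=0}^\infty\dim J_{k-6m,A_3,m}^{w,W(A_3)},
$$
the last equality being multiplication by $\eta^{12m}$ (a finite sum, since the index-$m$ part of $J^{w,W(A_3)}_{*,A_3,*}$ has lowest weight $-4m>-6m$). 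On the other hand, given $\phi_m\in J^{w,W(A_3)}_{k-6m,A_3,m}$, the Wirthm\"uller structure theorem writes $\phi_m$ as a polynomial $P$ over $\CC$ in $E_4,E_6,\phi_{0,A_3},\phi_{-2,A_3},\phi_{-3,A_3},\phi_{-4,A_3}$ that is homogeneous of degree $m$ in the four index-one generators; substituting $\cE_{4,A_3(2)},\cE_{6,A_3(2)},\Grit(\eta^{12}\phi_{0,A_3}),\Grit(\eta^{12}\phi_{-2,A_3}),\Grit(\eta^{12}\phi_{-3,A_3}),\Grit(\eta^{12}\phi_{-4,A_3})$ for the respective arguments yields (by the previous theorem, these six forms generate $M_*(\Gamma_{2,4}(A_3(2)))$) a modular form whose lowest nonvanishing Fourier--Jacobi coefficient is $\eta^{12m}\phi_m\cdot\xi^{m/2}$, because the leading Fourier--Jacobi coefficients of the six generators are exactly $E_4,E_6,\eta^{12}\phi_{0,A_3},\eta^{12}\phi_{-2,A_3},\eta^{12}\phi_{-3,A_3},\eta^{12}\phi_{-4,A_3}$ and each index-one generator contributes one factor $\xi^{1/2}$. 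A leading-term (filtration by $m$) argument then gives $\dim M_k(\Gamma_{2,4}(A_3(2)))\geq\sum_m\dim J^{w,W(A_3)}_{k-6m,A_3,m}$. Combining this with injectivity and the displayed upper bound forces all three quantities to coincide, which proves both the isomorphism and the dimension formula.

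The step I expect to require the most care is the first inequality: making precise, exactly as in \cite[\S 3]{WW20}, that the symmetry relation forces a formal Fourier--Jacobi expansion to carry only $\dim J_{k,A_3,m}^{W(A_3)}(v_\eta^{12m})[m/2]$ worth of new data in index $m$; and the attendant bijectivity of multiplication by $\eta^{12m}$ from $J^{w,W(A_3)}_{k-6m,A_3,m}$ onto $J_{k,A_3,m}^{W(A_3)}(v_\eta^{12m})[m/2]$, where injectivity is trivial and surjectivity holds because dividing a holomorphic Jacobi form that vanishes to order $\geq m/2$ at the cusp by $\eta^{12m}=q^{m/2}\prod_{j\geq 1}(1-q^j)^{12m}$ leaves a holomorphic function that is again a weak Jacobi form (now of trivial character). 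Everything else is formally identical to the $\Gamma_{2,4}(A_1(2))$ case treated in Corollary \ref{Cor:FFJ}.
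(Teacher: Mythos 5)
Your proposal is correct and follows essentially the same route as the paper: the paper proves the $A_1(2)$ case (Corollary 3.3) in detail and states that the remaining cases, including this one, are proved "similarly," and your argument is exactly that adaptation — well-definedness from the shape of the six generators, the upper bound $\dim FM_k \leq \sum_m \dim J^{W(A_3)}_{k,A_3,m}(v_\eta^{12m})[m/2]$ via the symmetry relation, and the matching lower bound by realizing each $\eta^{12m}\phi_m\xi^{m/2}$ as the leading Fourier--Jacobi coefficient of a polynomial in the generators using Wirthm\"uller's structure theorem. The extra points you flag (odd weights forcing the $W(A_3)$- rather than $\Orth(A_3)$-invariant setting, homogeneity of degree $m$ in the index-one generators, and bijectivity of multiplication by $\eta^{12m}$) are the right details and are consistent with the paper's treatment.
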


\subsection{The \texorpdfstring{$D_4$}{D4} case}
We consider the case of $2U\oplus D_4(-2)$. We fix the following model of $D_4$:
$$
D_4=\{ x\in \ZZ^4:  x_1+x_2+x_3+x_4 \equiv 0 \m 2\}, \; \mathfrak{z}=(z_1,z_2,z_3,z_4)\in \CC^4, \; \zeta_j=e^{2\pi i z_j}, j=1,2,3,4.
$$
The bigraded ring of weak Jacobi forms invariant under the Weyl group $W(D_4)$ is freely generated over $M_*(\SL_2(\ZZ))$ by four forms of index 1, i.e. $\phi_{0,D_4}$, $\phi_{-2,D_4}$, $\phi_{-4,D_4}$, $\psi_{-4,D_4}$, and one form $\phi_{-6,D_4,2}$ of index 2  (see \cite{Wir92} and \cite[\S 2]{WW20}). The five generators were constructed in \cite{AG19} and we can choose the generators of index one with the following Fourier expansions:
\begin{align*}
\phi_{0,D_4}&=[1,0,0,0] +16 +O(q) \in J_{0,D_4,1}^w,\\
\phi_{-2,D_4}&=[\frac{1}{2},\frac{1}{2},\frac{1}{2},\frac{1}{2}]_0+[\frac{1}{2},\frac{1}{2},\frac{1}{2},\frac{1}{2}]_1+[1,0,0,0] -24 +O(q) \in J_{-2,D_4,1}^w,\\
\phi_{-4,D_4}&=[\frac{1}{2},\frac{1}{2},\frac{1}{2},\frac{1}{2}]_0+[\frac{1}{2},\frac{1}{2},\frac{1}{2},\frac{1}{2}]_1-2[1,0,0,0] +O(q) \in J_{-4,D_4,1}^w,\\
\psi_{-4,D_4}&=[\frac{1}{2},\frac{1}{2},\frac{1}{2},\frac{1}{2}]_0-[\frac{1}{2},\frac{1}{2},\frac{1}{2},\frac{1}{2}]_1+O(q) \in J_{-4,D_4,1}^w,
\end{align*}
where $[1,0,0,0]=\sum_{j=1}^4\zeta_j^{\pm 1}$,  and $[\frac{1}{2},\frac{1}{2},\frac{1}{2},\frac{1}{2}]_0$ (resp. $[\frac{1}{2},\frac{1}{2},\frac{1}{2},\frac{1}{2}]_1$) is the sum of terms of the form $\zeta_1^{\pm \frac{1}{2}}\zeta_2^{\pm \frac{1}{2}}\zeta_3^{\pm \frac{1}{2}}\zeta_4^{\pm \frac{1}{2}}$ with even (resp. odd) number of $-\frac{1}{2}$.
 
The generators of type $\phi$ are invariant with respect to the operator  $z_1 \mapsto - z_1$ but the generator $\psi_{-4,D_4}$ is anti-invariant. We remark that the Weyl group $W(C_4)$ of root system $C_4$ is generated by $W(D_4)$ and the operator  $z_1 \mapsto - z_1$. Moreover, $\phi_{-2,D_4}$ is invariant under $\Orth(D_4)$. We can choose $\phi_{-6,D_4,2}$ as the generator of Weyl invariant Jacobi forms associated to root system $F_4$ (see \cite{Wir92}). Since $W(F_4)=\Orth(D_4)$,  we can assume that $\phi_{-6,D_4,2}$ is invariant under $\Orth(D_4)$.

We have the following unique weak Jacobi forms with given Fourier expansions, which can be constructed in terms of the above generators.
\begin{align*}
\phi_{0,D_4(2)}&=[1,0,0,0]+4+O(q)\in J_{0,D_4,2}^{w,W(C_4)},\\
\varphi_{0,D_4(2)}&=[1,0,0,0]+[\frac{1}{2},\frac{1}{2},\frac{1}{2},\frac{1}{2}]_0+[\frac{1}{2},\frac{1}{2},\frac{1}{2},\frac{1}{2}]_1+12+O(q)\in J_{0,D_4,2}^{w,\Orth(D_4)},\\
\phi^{(1)}_{0,D_4(2)}&=\sum_{1\leq i< j \leq 4}\zeta_i^{\pm 1}\zeta_j^{\pm 1}+48+O(q)\in J_{0,D_4,2}^{w,\Orth(D_4)}.
\end{align*}

We need the following four reflective Borcherds products:
\begin{enumerate}
\item $\Borch(\phi_{0,D_4(2)})\in M_2(\Gamma_1, \chi)$. Its divisor is the sum of $\cD_{v_1}$ with multiplicity one for vectors $v_1\in 2U\oplus D_4(-2)$ primitive, $(v_1,v_1)=-8$, $\div(v_1)=4$, and $\frac{v_1}{2}-(1,0,0,0)\in 2U\oplus D_4(-1)$. 
\item $\Borch(\varphi_{0,D_4(2)})\in M_6(\Orth^+(2U\oplus D_4(-2)), \chi)$. Its divisor is the sum of $\cD_{v}$ with multiplicity one for vectors $v\in 2U\oplus D_4(-2)$ primitive, $(v,v)=-8$, and $\div(v)=4$. 
\item $\Borch(\phi^{(1)}_{0,D_4(2)})\in M_{24}(\Orth^+(2U\oplus D_4(-2)),\chi)$. Its divisor is the sum of $\cD_u$ with multiplicity one for $u\in 2U\oplus D_4(-2)$ primitive, $(u,u)=-4$, and $\div(u)=2$. 
\item $\Phi_{12, D_4(2)} \in M_{12}(\Orth^+(2U\oplus D_4(-2)), \chi)$. Its divisor is the sum of $\cD_r$ with multiplicity one for $r\in 2U\oplus D_4(-2)$ with $(r,r)=-2$. 
\end{enumerate}
Here, $\Gamma_1$ is the subgroup generated by $\widetilde{\Orth}^+(2U\oplus D_4(-2))$ and $W(C_4)$.
If the input is invariant up to a character with respect to $W(C_4)$, then the additive lift is a modular form for $\Gamma_1$.
Thus we have the following additive lifts:
\begin{align*}
\Grit(\eta^{12}\phi_{0,D_4})&\in M_6(\Gamma_1, \chi),\\
\Grit(\eta^{12}\phi_{-2,D_4})&\in M_4(\Orth^+(2U\oplus D_4(-2)), \chi),\\
\Grit(\eta^{12}\phi_{-4,D_4})&\in M_2(\Gamma_1, \chi),\\
\Grit(\eta^{12}\psi_{-4,D_4})&\in M_2(\Gamma_1, \chi),\\
\Grit(\eta^{24}\phi_{-6,D_4,2})&\in M_6(\Orth^+(2U\oplus D_4(-2)), \chi).
\end{align*}

We remark that $\Grit(\eta^{12}\psi_{-4,D_4})=\Borch(\phi_{0,D_4(2)})$ and 
$$
\phi_{0,D_4(2)}=-\frac{(\vartheta(\tau,z_1)\vartheta(\tau,z_2)\vartheta(\tau,z_3)\vartheta(\tau,z_4))\lvert T_{-}^{(2)}(3)}{\vartheta(\tau,z_1)\vartheta(\tau,z_2)\vartheta(\tau,z_3)\vartheta(\tau,z_4)}.
$$

\begin{theorem}
Let $\Gamma_{2,4}(D_4(2))$ be the subgroup of $\Orth^+(2U\oplus D_4(-2))$ generated by reflections associated to vectors of types $r$ and $u$ above.  Let $\Gamma_{2,4,8'}(D_4(2))$ be the subgroup of $\Orth^+(2U\oplus D_4(-2))$ generated by reflections associated to vectors of types $r$, $u$ and $v_1$ above.   
\begin{enumerate}
\item The graded algebra $M_*(\Gamma_{2,4}(D_4(2)))$ is freely generated by $\cE_{4, D_4(2)}$, $\cE_{6, D_4(2)}$, $\Grit(\eta^{12}\phi_{0,D_4})$, $\Grit(\eta^{12}\phi_{-2,D_4})$, $\Grit(\eta^{12}\phi_{-4,D_4})$, $\Grit(\eta^{12}\psi_{-4,D_4})$, and $\Grit(\eta^{24}\phi_{-6,D_4,2})$. The Jacobian determinant equals
$\Borch(\phi^{(1)}_{0,D_4(2)})\Phi_{12, D_4(2)}$ up to a constant. $(36-6=4+6+6+4+2+2+6)$
\item The graded algebra $M_*(\Gamma_{2,4,8'}(D_4(2)))$ is freely generated by $\cE_{4, D_4(2)}$, $\cE_{6, D_4(2)}$, $\Grit(\eta^{12}\phi_{0,D_4})$, $\Grit(\eta^{12}\phi_{-2,D_4})$, $\Grit(\eta^{12}\phi_{-4,D_4})$, $\Grit(\eta^{12}\psi_{-4,D_4})^2$, and $\Grit(\eta^{24}\phi_{-6,D_4,2})$. The Jacobian determinant equals
$\Borch(\phi_{0,D_4(2)})\Borch(\phi^{(1)}_{0,D_4(2)})\Phi_{12, D_4(2)}$ up to a constant. $(38-6=4+6+6+4+2+4+6)$
\end{enumerate}
\end{theorem}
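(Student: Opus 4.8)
The plan is to transcribe the proof of Theorem~\ref{th:A1-2}, now with seven additive lifts instead of four; I spell out part~(1), part~(2) being a minor variation. First I would fix the group. Let $\chi_1,\dots,\chi_5$ be the finite-order characters of the five non-Eisenstein generators $\Grit(\eta^{12}\phi_{0,D_4})$, $\Grit(\eta^{12}\phi_{-2,D_4})$, $\Grit(\eta^{12}\phi_{-4,D_4})$, $\Grit(\eta^{12}\psi_{-4,D_4})$, $\Grit(\eta^{24}\phi_{-6,D_4,2})$; three of these are characters of $\Gamma_1$ and two of $\Orth^+(2U\oplus D_4(-2))$, and since $\Gamma_1\supseteq\widetilde{\Orth}^+(2U\oplus D_4(-2))$ we have $[\Orth^+(2U\oplus D_4(-2)):\Gamma_1]<\infty$. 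Put $\Gamma_0=\Gamma_1\cap\bigcap_{i=1}^{5}\ker\chi_i$. This is a finite index subgroup of $\Orth^+(2U\oplus D_4(-2))$ for which all seven generators are modular forms with trivial character (the Eisenstein lifts $\cE_{4,D_4(2)}$, $\cE_{6,D_4(2)}$ are already modular for the full group, so they impose no condition).

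Next I would establish algebraic independence and produce the Jacobian. By Theorem~\ref{th:CG2}, at the standard $1$-dimensional cusp attached to $2U$ the first nonzero Fourier--Jacobi coefficients of the seven generators are, up to nonzero scalars, $E_4$, $E_6$, $\eta^{12}\phi_{0,D_4}$, $\eta^{12}\phi_{-2,D_4}$, $\eta^{12}\phi_{-4,D_4}$, $\eta^{12}\psi_{-4,D_4}$, $\eta^{24}\phi_{-6,D_4,2}$. Since $\phi_{0,D_4}$, $\phi_{-2,D_4}$, $\phi_{-4,D_4}$, $\psi_{-4,D_4}$, $\phi_{-6,D_4,2}$ freely generate, over $M_*(\SL_2(\ZZ))$, the bigraded ring of $W(D_4)$-invariant weak Jacobi forms, these seven Jacobi forms are algebraically independent over $\CC$, and dividing them by the nowhere-vanishing $\eta^{12}$ and $\eta^{24}$ preserves this. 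Hence the seven lifts are algebraically independent, so their Jacobian determinant $J$ is a nonzero modular form on $\Gamma_0$ with the determinant character; its weight is $(4+6+6+4+2+2+6)+6=36$, and by the remarks after Theorem~\ref{th:converseJacobian} it vanishes on every mirror of a reflection of $\Gamma_0$.

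Then I would check that every reflection $\sigma_r$ with $r$ primitive and $(r,r)=-2$, and every $\sigma_u$ with $u$ primitive, $(u,u)=-4$ and $\div(u)=2$, lies in $\Gamma_0$, exactly as in Theorem~\ref{th:A1-2}. Such $\sigma_r$ and $\sigma_u$ lie in $\Gamma_1$ (a routine verification on the action on the discriminant form). If $\sigma_r\notin\Gamma_0$ then $\chi_i(\sigma_r)\neq1$ for some $i$, so that generator vanishes on $\cD_r$ and hence --- being invariant up to a character under $\widetilde{\Orth}^+(2U\oplus D_4(-2))$ --- on the entire divisor of $\Phi_{12, D_4(2)}$; dividing it by $\Phi_{12, D_4(2)}$, which has complete $(-2)$-divisor with multiplicity one, gives a nonzero holomorphic modular form of negative weight, which is absurd. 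The same argument with $\Borch(\phi^{(1)}_{0,D_4(2)})$ (weight $24$, complete divisor on the norm $-4$, $\div=2$ vectors) in place of $\Phi_{12, D_4(2)}$ shows $\sigma_u\in\Gamma_0$. Now $F:=\Borch(\phi^{(1)}_{0,D_4(2)})\Phi_{12, D_4(2)}$ is a weight-$36$ modular form on $\Gamma_0$ (with a character) whose divisor is precisely the union of all $\cD_r$ and $\cD_u$, each with multiplicity one --- that is, the union of all mirrors of reflections of $\Gamma_0$. Therefore $J/F$ is a holomorphic modular form of weight $0$, hence a constant, and $J=cF$; Theorem~\ref{th:converseJacobian} now gives that $M_*(\Gamma_0)$ is freely generated by the seven lifts and that $\Gamma_0$ is generated by the $\sigma_r$ and $\sigma_u$, i.e.\ $\Gamma_0=\Gamma_{2,4}(D_4(2))$. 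For part~(2) one uses $\Grit(\eta^{12}\psi_{-4,D_4})^2$ as a generator (so $\chi_4$ is replaced by $\chi_4^2$ in the definition of the group) and takes $F=\Borch(\phi_{0,D_4(2)})\Borch(\phi^{(1)}_{0,D_4(2)})\Phi_{12, D_4(2)}$ of weight $2+24+12=38$; here one also notes that $\Grit(\eta^{12}\psi_{-4,D_4})=\Borch(\phi_{0,D_4(2)})$ has character $-1$ along $\cD_{v_1}$ (vanishing there to order one), so its square has trivial character along $\cD_{v_1}$, and the rest runs unchanged.

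The hard part is not the assembly above but the inputs established just before the theorem: that the weight-$0$ weak Jacobi forms $\phi_{0,D_4(2)}$, $\varphi_{0,D_4(2)}$, $\phi^{(1)}_{0,D_4(2)}$ have integral Fourier coefficients with exactly the claimed $q^0$-terms, so that their Borcherds products have precisely the stated \emph{complete} reflective divisors, weights and characters, together with the additive-lift identities such as $\Grit(\eta^{12}\psi_{-4,D_4})=\Borch(\phi_{0,D_4(2)})$. The feature genuinely special to the $D_4$ case --- and the point requiring the most care --- is the auxiliary group $\Gamma_1$: unlike the $A_n$ cases, not all of the lifts are modular for the full orthogonal group, so one must verify both that $[\Orth^+(2U\oplus D_4(-2)):\Gamma_1]<\infty$ and that the reflections $\sigma_r$, $\sigma_u$, $\sigma_{v_1}$ really do lie in $\Gamma_1$, in order for the character computations on them to be meaningful.
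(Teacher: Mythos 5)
Your proposal is correct and follows essentially the same route as the paper, which proves only the $A_1(2)$ case in full and declares the remaining cases (including this one) to be analogous: intersect the kernels of the characters of the non-Eisenstein lifts, use the leading Fourier--Jacobi coefficients and Wirthm\"uller's freeness result to get a nonzero Jacobian of weight $36$ (resp.\ $38$), show the relevant reflections lie in the kernel by the quotient-by-$\Phi_{12}$ argument, match the Jacobian with the product of Borcherds forms by weight and divisor, and invoke Theorem~\ref{th:converseJacobian}. Your explicit attention to the auxiliary group $\Gamma_1$ and to verifying that $\sigma_r,\sigma_u$ actually lie in it is exactly the extra care the $D_4$ case demands beyond the $A_1(2)$ template.
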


The formal Fourier-Jacobi expansions  for $\Gamma_{2,4}(D_4(2))$ are defined as
$$
FM_{2k}(\Gamma_{2,4}(D_4(2)))=\left\{ \sum_{m=0}^\infty \psi_m \xi^{m/2} \in \prod_{m=0}^\infty J_{2k,m}^{W(D_4)}(v_\eta^{12m}) : f_m(n,\ell)=f_{2n}(m/2,\ell), \forall n,\ell,m \right\},
$$
where $J_{k,D_4,m}^{W(D_4)}(v_\eta^{12m})$ is the space of Jacobi forms invariant under the Weyl group $W(D_4)$. 
\begin{corollary}
The following map is an isomorphism
\begin{align*}
M_{2k}(\Gamma_{2,4}(D_4(2))) &\to FM_{2k}(\Gamma_{2,4}(D_4(2))),\\
F &\mapsto \text{Fourier-Jacobi expansion of $F$}.
\end{align*}
Moreover, we have
$$
\dim M_{2k}(\Gamma_{2,4}(D_4(2))) = \sum_{m=0}^\infty \dim J_{2k-6m,D_4,m}^{w,W(D_4)}.
$$
\end{corollary}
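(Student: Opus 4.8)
The plan is to follow the proof of Corollary~\ref{Cor:FFJ} line by line, now using the seven explicit generators of $M_*(\Gamma_{2,4}(D_4(2)))$ produced in part~(1) of the theorem above. Since each of these generators has even weight, the algebra $M_*(\Gamma_{2,4}(D_4(2)))$ is concentrated in even weights, which is why only $M_{2k}$ appears. The first step is to check that the map is well defined, that is, that the Fourier--Jacobi expansion of every modular form in $M_{2k}(\Gamma_{2,4}(D_4(2)))$ lies in $FM_{2k}(\Gamma_{2,4}(D_4(2)))$; it suffices to verify this for the seven generators, since the defining conditions of $FM_{2k}(\Gamma_{2,4}(D_4(2)))$ are preserved under sums and products (the symmetry $f_m(n,\ell)=f_{2n}(m/2,\ell)$ is equivalent to the symmetry of a suitably re-indexed three-variable Fourier coefficient, which multiplies correctly). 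The two Eisenstein lifts $\cE_{4,D_4(2)}$, $\cE_{6,D_4(2)}$ have Fourier--Jacobi expansions in integral powers of $\xi$ with leading coefficients the normalized $\SL_2(\ZZ)$-Eisenstein series; the four index-one lifts $\Grit(\eta^{12}\phi_{0,D_4})$, $\Grit(\eta^{12}\phi_{-2,D_4})$, $\Grit(\eta^{12}\phi_{-4,D_4})$, $\Grit(\eta^{12}\psi_{-4,D_4})$ have expansions in $\xi^{m/2}$ with leading term $\eta^{12}(\cdot)\,\xi^{1/2}$ and multiplier $v_\eta^{12m}$ on the $\xi^{m/2}$-slot; and $\Grit(\eta^{24}\phi_{-6,D_4,2})$ has leading term $\eta^{24}\phi_{-6,D_4,2}\,\xi$. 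The symmetry $f_m(n,\ell)=f_{2n}(m/2,\ell)$ holds for each of these because it is the defining symmetry of the additive lift in Theorem~\ref{th:CG2}, and all Fourier--Jacobi coefficients are $W(D_4)$-invariant because $\Gamma_{2,4}(D_4(2))$ contains the reflections in the roots of $D_4(2)$ (the vectors of type $u$), which act on $\mathfrak z$ as $W(D_4)$. Injectivity of the map is obvious.

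The heart of the proof is a two-sided dimension estimate, exactly as in Corollary~\ref{Cor:FFJ}. On the one hand, the symmetry $f_m(n,\ell)=f_{2n}(m/2,\ell)$ determines, in any formal expansion, the part of $\psi_m$ of $q$-order below $m/2$ in terms of the earlier coefficients $\psi_0,\dots,\psi_{m-1}$, so by the argument of \cite[\S3]{WW20}
$$
\dim FM_{2k}(\Gamma_{2,4}(D_4(2)))\ \le\ \sum_{m=0}^\infty \dim J_{2k,D_4,m}^{W(D_4)}(v_\eta^{12m})[m/2],
$$
where $J_{2k,D_4,m}^{W(D_4)}(v_\eta^{12m})[m/2]=\{\phi\in J_{2k,D_4,m}^{W(D_4)}(v_\eta^{12m}):\phi=O(q^{m/2})\}$; since $\eta^{12m}$ has weight $6m$, multiplier $v_\eta^{12m}$ and $q$-order exactly $m/2$, division by $\eta^{12m}$ gives an isomorphism $J_{2k,D_4,m}^{W(D_4)}(v_\eta^{12m})[m/2]\xrightarrow{\sim}J_{2k-6m,D_4,m}^{w,W(D_4)}$. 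On the other hand, given $\psi_m\in J_{2k,D_4,m}^{W(D_4)}(v_\eta^{12m})[m/2]$, the form $\psi_m/\eta^{12m}$ is a $W(D_4)$-invariant weak Jacobi form of weight $2k-6m$, index $m$, trivial character, hence by \cite{Wir92} it equals a polynomial $P$ over $M_*(\SL_2(\ZZ))$ in $\phi_{0,D_4},\phi_{-2,D_4},\phi_{-4,D_4},\psi_{-4,D_4},\phi_{-6,D_4,2}$, every monomial of which has index sum $m$ (with $\phi_{-6,D_4,2}$ counted with weight two). Substituting $\cE_{4,D_4(2)},\cE_{6,D_4(2)}$ for the $\SL_2(\ZZ)$-Eisenstein series, $\Grit(\eta^{12}\cdot)$ for each index-one generator, and $\Grit(\eta^{24}\phi_{-6,D_4,2})$ for $\phi_{-6,D_4,2}$ produces an element of $M_{2k}(\Gamma_{2,4}(D_4(2)))$ whose first non-zero Fourier--Jacobi coefficient is exactly $\psi_m\,\xi^{m/2}$; therefore $\dim M_{2k}(\Gamma_{2,4}(D_4(2)))\ge\sum_m\dim J_{2k-6m,D_4,m}^{w,W(D_4)}$. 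Chaining these estimates with the injectivity of the map forces equality throughout, so the map is an isomorphism and the dimension formula follows.

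The one point requiring genuine care is the identification of the leading Fourier--Jacobi term of the lifted polynomial in the last step. One must know that each of the four index-one generators contributes its leading Fourier--Jacobi term in the $\xi^{1/2}$-slot while $\Grit(\eta^{24}\phi_{-6,D_4,2})$ contributes it in the $\xi^{1}$-slot, so that, using that Fourier--Jacobi expansion is multiplicative on leading terms, an index-$m$ monomial lands precisely in the $\xi^{m/2}$-slot, and that the accompanying $\eta$-powers recombine as $(\eta^{12})^{d_1}(\eta^{24})^{d_2}=\eta^{12m}$ for a monomial with $d_1+2d_2=m$; this is exactly where the choice of $\eta^{12}$ for the index-one generators versus $\eta^{24}$ for the index-two generator is essential. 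The remaining ingredient, the bound $\dim FM_{2k}\le\sum_m\dim J[m/2]$ drawn from the recursion $f_m(n,\ell)=f_{2n}(m/2,\ell)$, is identical in structure to the corresponding step in the proof of Corollary~\ref{Cor:FFJ} and in \cite[\S3]{WW20}.
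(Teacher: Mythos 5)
Your proof is correct and follows exactly the template the paper intends: it is the proof of Corollary~\ref{Cor:FFJ} transported to the $D_4(2)$ case, with the two-sided dimension count via $J_{2k,D_4,m}^{W(D_4)}(v_\eta^{12m})[m/2]\cong J_{2k-6m,D_4,m}^{w,W(D_4)}$ and Wirthm\"uller's structure theorem. You also correctly handle the one genuinely new wrinkle, namely that the index-two generator $\phi_{-6,D_4,2}$ must be lifted with $\eta^{24}$ so that an index-$m$ monomial lands in the $\xi^{m/2}$-slot with the factor $\eta^{12m}$.
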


The above corollary implies that for an arbitrary $\phi\in J_{k,D_4,m}^{w,W(D_4)}$ there is a modular form of weight $k+6m$ on $\Gamma_{2,4}(D_4(2))$ whose first non-vanishing Fourier-Jacobi coefficient is $\eta^{12m}\phi\cdot \xi^{m/2}$. Let $\phi_{0,F_4,1}$, $\phi_{-2,F_4,1}$, $\phi_{-6,F_4,2}$, $\phi_{-8,F_4,2}$ and $\phi_{-12,F_4,3}$ be the generators of the bigraded ring of $W(F_4)$-invariant weak Jacobi forms (see \cite{Wir92}). Then there exist modular forms of weights $6$, $4$, $6$, $4$, $6$ on $\Gamma_{2,4}(D_4(2))$ whose first non-vanishing Fourier-Jacobi coefficients are respectively $\eta^{12}\phi_{0,F_4,1}\cdot \xi^{1/2}$, $\eta^{12}\phi_{-2,F_4,1}\cdot \xi^{1/2}$, $\eta^{24}\phi_{-6,F_4,2}\cdot \xi$, $\eta^{24}\phi_{-8,F_4,2}\cdot \xi$ and $\eta^{36}\phi_{-12,F_4,3}\cdot \xi^{3/2}$. Let $\Gamma_{2,4,8}(D_4(2))$ be the subgroup of $\Orth^+(2U\oplus D_4(-2))$ generated by reflections associated to vectors of types $r$, $u$ and $v$ above. We can assume that the five functions are modular forms with trivial character for $\Gamma_{2,4,8}(D_4(2))$ because we can consider their invariants under $\Gamma_{2,4,8}(D_4(2))/\Gamma_{2,4}(D_4(2))$. The Jacobian of the five forms and $\cE_{4, D_4(2)}$ and $\cE_{6, D_4(2)}$ is not identically zero and is a modular form of weight 42 on $\Gamma_{2,4,8}(D_4(2))$ with the determinant character. Thus the Jacobian is equal to $\Borch(\varphi_{0,D_4(2)})\Borch(\phi^{(1)}_{0,D_4(2)})\Phi_{12, D_4(2)}$ up to a constant multiple. We then prove the following theorem.

\begin{theorem}\label{th:D4F4}
The graded algebra $M_*(\Gamma_{2,4,8}(D_4(2)))$ is freely generated by three forms of weight $4$ and four forms of weight $6$. 
\end{theorem}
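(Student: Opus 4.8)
The plan is to apply Theorem \ref{th:converseJacobian} to $\Gamma=\Gamma_{2,4,8}(D_4(2))$ with the seven candidate generators $\cE_{4,D_4(2)}$, $\cE_{6,D_4(2)}$ and the five additive lifts attached to the $W(F_4)$-invariant generators $\phi_{0,F_4,1}$, $\phi_{-2,F_4,1}$, $\phi_{-6,F_4,2}$, $\phi_{-8,F_4,2}$, $\phi_{-12,F_4,3}$ introduced above, which have weights $6,4,6,4,6$; together with $\cE_{4,D_4(2)}$ and $\cE_{6,D_4(2)}$ this is a list of three forms of weight $4$ and four of weight $6$, as claimed. I would first check that all seven are modular with trivial character on $\Gamma_{2,4,8}(D_4(2))$ --- which is of finite index in $\Orth^+(2U\oplus D_4(-2))$ since it contains $\Gamma_{2,4}(D_4(2))$. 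The Eisenstein lifts are modular on the full group $\Orth^+(2U\oplus D_4(-2))$, while for the five $F_4$-lifts the corollary on formal Fourier-Jacobi expansions for $\Gamma_{2,4}(D_4(2))$ yields modular forms on $\Gamma_{2,4}(D_4(2))$ with leading Fourier-Jacobi coefficient $\eta^{12m}\phi\,\xi^{m/2}$; using $W(F_4)=\Orth(D_4)$, one averages such a form over the finite quotient $\Gamma_{2,4,8}(D_4(2))/\Gamma_{2,4}(D_4(2))$ to obtain a form that is modular with trivial character on $\Gamma_{2,4,8}(D_4(2))$ and still has $\eta^{12m}\phi$ as leading Fourier-Jacobi coefficient up to a nonzero scalar.

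Next I would show the Jacobian $J$ of the seven forms does not vanish identically. Their leading Fourier-Jacobi coefficients are $E_4$, $E_6$ and the forms $\eta^{12m_i}\phi_i$, where $\phi_1,\dots,\phi_5$ denote the five $W(F_4)$-invariant generators $\phi_{0,F_4,1}$, $\phi_{-2,F_4,1}$, $\phi_{-6,F_4,2}$, $\phi_{-8,F_4,2}$, $\phi_{-12,F_4,3}$ and $m_i$ their indices $1,1,2,2,3$ respectively; since $E_4$, $E_6$, $\phi_1,\dots,\phi_5$ freely generate the bigraded ring of $W(F_4)$-invariant weak Jacobi forms over $M_*(\SL_2(\ZZ))$, they are algebraically independent over $\CC$, and the leading-coefficient argument used in the proof of Theorem \ref{th:A1-2} shows that the seven additive lifts are algebraically independent over $\CC$, hence $J\not\equiv 0$. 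By the basic properties of the Jacobian (Rankin-Cohen-Ibukiyama) determinant recalled above, $J$ is then a modular form of weight $(3\cdot 4+4\cdot 6)+6=42$ on $\Gamma_{2,4,8}(D_4(2))$ with the determinant character, so it vanishes on every mirror of a reflection in $\Gamma_{2,4,8}(D_4(2))$: on all $\cD_r$ with $(r,r)=-2$, on all $\cD_u$ with $(u,u)=-4$ and $\div(u)=2$, and on all $\cD_v$ with $(v,v)=-8$ and $\div(v)=4$.

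Finally I would compare $J$ with $F:=\Borch(\varphi_{0,D_4(2)})\,\Borch(\phi^{(1)}_{0,D_4(2)})\,\Phi_{12,D_4(2)}$. From the divisors listed above, $F$ is a modular form of weight $6+24+12=42$ on $\Gamma_{2,4,8}(D_4(2))$ vanishing exactly, with multiplicity one, on the union of those same $\cD_v$, $\cD_u$, $\cD_r$, which are precisely the mirrors of the generating reflections of $\Gamma_{2,4,8}(D_4(2))$. Since $J$ vanishes on all of them, $J/F$ is a holomorphic modular form of weight $0$ on $\Gamma_{2,4,8}(D_4(2))$, hence a nonzero constant, so $J=cF$ for some $c\in\CC^*$; in particular $F$ has no zeros off these mirrors, $\Gamma_{2,4,8}(D_4(2))$ has no reflections beyond those of types $r$, $u$, $v$, and $F$ vanishes exactly with multiplicity one on all mirrors of reflections in $\Gamma_{2,4,8}(D_4(2))$ while being, up to a constant, the Jacobian of seven modular forms with trivial character. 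Theorem \ref{th:converseJacobian} now gives that $M_*(\Gamma_{2,4,8}(D_4(2)))$ is freely generated by these seven forms, three of weight $4$ and four of weight $6$. The step most in need of care is the first one: making the five $F_4$-lifts modular with trivial character on the larger group $\Gamma_{2,4,8}(D_4(2))$ without destroying the algebraic independence of their leading Fourier-Jacobi coefficients; once that is arranged, the weight bookkeeping, the divisor comparison with the triple Borcherds product, and the application of Theorem \ref{th:converseJacobian} are routine.
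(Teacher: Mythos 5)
Your proposal is correct and follows essentially the same route as the paper: the five forms with leading Fourier--Jacobi coefficients $\eta^{12m}\phi\,\xi^{m/2}$ for the $W(F_4)$-invariant generators are produced via the formal Fourier--Jacobi corollary for $\Gamma_{2,4}(D_4(2))$, made invariant under $\Gamma_{2,4,8}(D_4(2))/\Gamma_{2,4}(D_4(2))$, and their Jacobian together with $\cE_{4,D_4(2)}$, $\cE_{6,D_4(2)}$ is matched in weight $42$ against $\Borch(\varphi_{0,D_4(2)})\Borch(\phi^{(1)}_{0,D_4(2)})\Phi_{12,D_4(2)}$ before applying Theorem \ref{th:converseJacobian}. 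Your explicit remark that $W(F_4)=\Orth(D_4)$-invariance keeps the leading Fourier--Jacobi coefficient intact under averaging is exactly the point the paper handles implicitly.
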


\begin{remark}
In fact, the eight groups $\Gamma_{2,4}(A_1(2))$, $\Gamma_{2,6}(A_1(3))$, $\Gamma_{2,8}(A_1(4))$, $\Gamma_{2,4,8}(2A_1(2))$, $\Gamma_{2,4,12}(A_2(2))$, $\Gamma_{2,6,18}(A_2(3))$, $\Gamma_{2,4,8}(A_3(2))$ and $\Gamma_{2,4,8}(D_4(2))$ are the maximal reflection subgroups contained in the corresponding integral orthogonal groups. In other words, each of them is generated by all reflections in the integral orthogonal group.
\end{remark}

\begin{remark}
In \cite{WW20}, B. Williams and the author proved that for some root system $R$ the algebra of modular forms for the group generated by the discriminant kernel of $2U\oplus L_R(-1)$ and the Weyl group $W(R)$ is a free algebra generated by $\rank(R)+3$ forms of weights $4$, $6$ and $-k_j+12m_j$, where $L_R$ is the root lattice generated by $R$, and the pairs $(-k_j,m_j)$ are the weights and indices of Weyl invariant weak Jacobi forms of type $R$. In this paper, we in fact construct free algebras of modular forms for rescaled root lattices, i.e. $2U\oplus L_R(-t)$. Unlike the previous cases, these reflection groups $\Gamma$ in the present paper do not contain the discriminant kernel by the classification result \cite[Theorem 4.4]{Wan20}.  That is why we use additive lifts with characters to construct generators. We next explain the relation between the weights of generators of modular forms and the weights of generators of Jacobi forms. Let $\Gamma$ be one of the 16 reflection groups. Let $W(R)$ be the maximal Weyl group contained in $\Gamma$. Then the weights of generators of $M_*(\Gamma)$ is given by $4$, $6$ and $-k_j+12m_j/t$, where the lattice model of $\Gamma$ is $2U\oplus L_R(-t)$, and the pairs $(-k_j,m_j)$ are the weights and indices of generators of $W(R)$-invariant weak Jacobi forms. For example, the lattice model of  $\Gamma_{2,4,8}(D_4(2))$ is $2U\oplus D_4(-2)$ and the maximal Weyl group contained in $\Gamma_{2,4,8}(D_4(2))$ is $W(F_4)$. The weights and indices of $W(F_4)$-invariant weak Jacobi forms are respectively $(0,1)$, $(-2,1)$, $(-6,2)$, $(-8,2)$ and $(-12,3)$. Thus the generators of $M_*(\Gamma_{2,4,8}(D_4(2)))$ have weights $4$, $6$, $6$, $4$, $6$, $4$ and $6$, which coincides with Theorem \ref{th:D4F4}.
\end{remark}

\bigskip

\noindent
\textbf{Acknowledgements} 
The author is grateful to Max Planck Institute for Mathematics in Bonn for its hospitality and financial support. 

\bibliographystyle{amsalpha}

\begin{thebibliography}{GW20a}

\bibitem[AG19]{AG19}  D. Adler, V. Gritsenko,  \textit{The $D_8$-tower of weak Jacobi forms and applications.} to appear in J. Geom. Phys. https://doi.org/10.1016/j.geomphys.2020.103616

\bibitem[AI05]{AI05} H. Aoki, T. Ibukiyama,\textit{Simple graded rings of Siegel modular forms, differential operators and Borcherds products.} Intern. J. Math. \textbf{16}:3 (2005), 249--279.

\bibitem[Aok16]{Aok16} H. Aoki, \textit{On Siegel paramodular forms of degree 2 with small levels.} Intern. J. Math. \textbf{27}:2, 1650011 (2016).

\bibitem[BB66]{BB66}  W. L. Baily, A. Borel,\textit{Compactification of arithmetic quotients of bounded symmetric domains.} Ann. of Math. (2), \textbf{84} (1966), 442--528.

\bibitem[Ber99]{Ber99} M. Bertola, \textit{Jacobi groups, Jacobi forms and their applications.} PhD thesis, SISSA, Trieste, 1999.

\bibitem[Bor98]{Bor98} R. E. Borcherds, \textit{Automorphic forms with singularities on Grassmannians.} Invent. Math. \textbf{123} (1998), no. 3, 491--562.

\bibitem[BR15]{BR15}  J. H. Bruinier, M. Raum,  \textit{Kudla's Modularity Conjecture and Formal Fourier-Jacobi Series.} Forum of Mathematics, Pi (2015), Vol. 3, e7, 30 pages.

\bibitem[CG13]{CG13}  F. Cl\'ery,   V.~Gritsenko,
\textit{Modular forms of orthogonal type and Jacobi theta-series.}
Abh. Math. Semin. Univ. Hambg. \textbf{83} (2013), 187--217.

\bibitem[EZ85]{EZ85} M. Eichler, D. Zagier, \textit{The Theory of Jacobi
Forms.} Progress in Mathematics, vol. \textbf{55}. Birkh\"auser, Boston, Mass., 1985.

\bibitem[Gri18]{Gri18} V. Gritsenko, \textit{Reflective modular forms and their applications.} Russian Math. Surveys \textbf{73}:5 (2018), 797--864.

\bibitem[GN98]{GN98} V. Gritsenko, V. Nikulin, 
\textit{Automorphic forms and Lorentzian Kac--Moody algebras. Part II.}  Internat. J. Math. \textbf{9} (1998), 201--275. 

\bibitem[GN18]{GN18} V. Gritsenko, V. V. Nikulin, \textit{Lorentzian Kac-Moody algebras with Weyl groups of $2$-reflections.} Proc. Lond. Math. Soc. (3) \textbf{116} (2018), no.3, 485--533.

\bibitem[GW20a]{GW18} V. Gritsenko, H. Wang,  \textit{Graded rings of integral Jacobi forms.}  J. Number Theory \textbf{214} (2020), 382--398. 

\bibitem[GW20b]{GW20} V. Gritsenko, H. Wang,  \textit{Theta block conjecture for paramodular forms of weight 2.}  Proc. Amer. Math. Soc. \textbf{148} (2020), 1863--1878.

\bibitem[Igu62]{Igu62}  J. Igusa, \textit{On Siegel modular forms of genus two.} Amer. J. Math. \textbf{84} (1962), 175--200.

\bibitem[IPY13]{IPY13} T. Ibukiyama, C. Poor,  D. S. Yuen, \textit{Jacobi forms that characterize paramodular forms.} Abh. Math. Semin. Univ. Hambg. \textbf{83} (2013), 111–-128.

\bibitem[VP89]{VP89} E. B. Vinberg, V. L. Popov, \textit{Invariant Theory,} in: Algebraic Theory-4, Itogi Nauki i Tekhniki. Sovrem. Probl. Mat. Fund. Napr., vol. 55, VINITI, Moscow, 1989, 137--309; English transl.: in: Encycl. Math. Sci., vol. 55, Algebrai Geometry-IV, Springer-Verlag, Berlin, 123--278.

\bibitem[Wan20]{Wan20} H. Wang, \textit{The classification of free algebras of orthogonal modular forms.} arXiv:2006.02291, to appear in Compos. Math.

\bibitem[Wir92]{Wir92}  K. Wirthm\"{u}ller, {\it Root systems and Jacobi forms.} Compos. Math. {\bf 82} (1992) 293--354.

\bibitem[WW20]{WW20}  H. Wang, B. Williams, \textit{On some free algebras of orthogonal modular forms.}  Adv. Math. \textbf{373} (2020), 107332, 22 pp.

\end{thebibliography}

\end{document}